\newtheorem{thm}{Theorem}[section]
\newtheorem{rem}[thm]{Remark}
\newtheorem{example}[thm]{Example} 
\newtheorem{lemma}[thm]{Lemma} 
\newtheorem{corollary}[thm]{Corollary} 
\newtheorem{proposition}[thm]{Proposition}
\theoremstyle{definition} 
\newtheorem{definition}[thm]{Definition} 
\newtheorem{assumption}[thm]{Assumption} 
\newtheorem{theorem}[thm]{Theorem}
\newcommand{\te}{{\tilde{O}}}
\newcommand{\cE}{\mathcal{E}}
\renewcommand{\leq}{\leqslant}
\renewcommand{\geq}{\geqslant}
\renewcommand{\ge}{\geqslant}
\renewcommand{\le}{\leqslant}
\newcommand{\norm}[1]{\left\lVert#1\right\rVert}
\newcommand{\bh}{{\boldsymbol{\theta}}}
\newcommand{\bg}{{\boldsymbol{\gamma}}}
\newcommand{\e}{{\mathbb{E}}}
\newcommand{\bs}{{\boldsymbol{X}}}
\newcommand{\pl}{{\mathrm{poly}}}
\newcommand{\pll}{{\mathrm{polylog}}}
\newcommand{\cC}{\mathcal{C}}
\newcommand{\R}{{\mathbb{R}}}
\newcommand{\cR}{{\mathcal{R}}}
\newcommand{\mf}{{\bm F}}
\newcommand{\bp}{\mathbb{P}}
\newcommand{\p}{{\mathbb{P}}} 
\newcommand{\cA}{\mathcal{A}}
\def\X{\bm{X}}
\def\Z{\bm{Z}}
\def\A{\bm{A}}
\def\R{\mathbb{R}}
\def\tht{\bm{\theta}}
\title[Logistic Regression Under Network Dependence]{High Dimensional Logistic Regression Under Network Dependence}
\author[Mukherjee]{Somabha Mukherjee} 
\address{Department of Statistics and Data Science, National University of Singapore, Singapore}
\email{somabha@nus.edu.sg}
\author[Niu]{Ziang Niu}
\address{Applied Mathematics and Computational Science, University of Pennsylvania, Philadelphia, USA}
\email{ziangniu@sas.upenn.edu}
\author[Halder]{Sagnik Halder}
\address{Department of Statistics, University of Florida, Gainesville, USA}
\email{shalder@ufl.edu}
\author[Bhattacharya]{Bhaswar B. Bhattacharya}
\address{Department of Statistics and Data Science, University of Pennsylvania, Philadelphia, USA}
\email{bhaswar@wharton.upenn.edu}
\author[Michailidis]{George Michailidis}
\address{Department of Statistics and Informatics Institute, University of  Florida, Gainesville, USA}
\email{gmichail@ufl.edu} 
\begin{document}

	\begin{abstract} 
		Logistic regression is key method for modeling the probability of a binary outcome based on a collection of covariates. However, the classical formulation of  logistic regression relies on the independent sampling assumption, which is often violated when the outcomes interact through an underlying network structure, such as over a temporal/spatial domain or on a social network. This necessitates the development of models that can simultaneously handle both the network `peer-effect' (arising from neighborhood interactions) and the effect of (possibly) high-dimensional covariates. In this paper, we develop a framework for incorporating such dependencies in a high-dimensional logistic regression model by introducing a quadratic interaction term, as in the Ising model, designed to capture the pairwise interactions from the underlying network. The resulting model can also be viewed as an Ising model, where the node-dependent external fields linearly encode the high-dimensional covariates. We propose a penalized maximum pseudo-likelihood method for estimating the network peer-effect and the effect of the covariates (the regression coefficients), which, in addition to handling the high-dimensionality of the parameters, conveniently avoids the computational intractability of the maximum likelihood approach. Under various standard regularity conditions, we show that the corresponding estimate attains the classical high-dimensional rate of consistency. In particular, our results imply that even under network dependence it is possible to consistently estimate the model parameters at the same rate as in classical (independent) logistic regression, when the true parameter is sparse and the underlying network is not too dense. Consequently,  we  derive  the rates of consistency of our proposed estimator for various  natural  graph  ensembles,  such  as bounded degree graphs, sparse Erd\H os-R\'enyi random graphs, and stochastic block models. We also develop an efficient algorithm for computing the estimates and validate our theoretical results in numerical experiments. An application to selecting genes in clustering spatial transcriptomics data is also discussed. 
		
		\vspace{0.1in}
		\noindent {\textbf{Keywords}. High-dimensional inference, Ising models, logistic regression, Markov random fields, network data, penalized regression, pseudo-likelihood, random graphs.} 
	\end{abstract}
	
	%\smallskip
	%\noindent \textbf{KEYWORDS. High-dimensional inference, Ising models, logistic regression, Markov random fields, network data, penalized regression, pseudo-likelihood, random graphs.} 

	\maketitle

	\section{Introduction}

	Logistic regression \citep{appliedlogistic,glmmn,glmnw} is a very popular and widely used method for modeling the probability of a binary response based on multiple features/predictor variables. It is a mainstay of modern multivariate statistics that has found widespread applications in various fields, including machine learning, biological and medical sciences, economics, marketing and finance industries, and social sciences. For example, in machine learning it is regularly used for image classification and in the medical sciences it is  often used to predict the risk of developing a particular disease based on the patients' observed characteristics, among others. To describe the model formally, denote the vector of predictor variables (covariates) by $\bm Z_1,\ldots, \bm Z_N \in \mathbb{R}^d$ and the independent response variables by $X_1,\ldots,X_N \in \{-1,1\}$. Then, the logistic regression model for the probability of a positive outcome conditional on the covariates is given by 
	\begin{equation*}%\label{logisticbasic}
		\p(X_i = 1|\bm Z_i) = \frac{e^{\bh^\top \bm Z_i}}{e^{\bh^\top \bm Z_i} + e^{-\bh^\top \bm Z_i}} , 
	\end{equation*} 
	for $1\le i\le N$, where $\bh = (\theta_1, \theta_2, \ldots, \theta_d)^\top \in \mathbb{R}^d$ is the vector of regression coefficients.\footnote{Note that we are parameterizing the outcomes as $\{-1, 1\}$ instead of the more standard $\{0, 1\}$ to integrate this within the framework of the Ising model (defined in \eqref{pmf}), where the $\{-1, 1\}$ notation is more common.} Using the independence of the response variables, the joint distribution of $\bm X := (X_1,\ldots,X_N)$ given $\bm Z := (\bm Z_1,\ldots, \bm Z_N) \in \mathbb R^{d \times N}$ can be written as: 
	\begin{equation}\label{jointlogistic}
		\p(\bm X |\bm Z) = \prod_{i=1}^N \frac{e^{ X_i \bh^\top \bm Z_i}}{e^{\bh^\top \bm Z_i} + e^{-\bh^\top \bm Z_i} }  =  \frac{1}{\mathcal Z_N(\bm \theta, \bm Z)}  \exp\left\{ \sum_{i=1}^N X_i (\bh^\top \bm Z_i) \right\}, 
	\end{equation}
	where $\mathcal Z_N(\bm \theta, \bm Z) = \prod_{i=1}^N \frac{1}{e^{\bh^\top \bm Z_i} + e^{-\bh^\top \bm Z_i} }$ is the normalizing constant. It is well-known from the classical theory of generalized linear models that the parameter $\bm \theta$ in \eqref{jointlogistic} can be estimated at rate $O(1/\sqrt N)$ for fixed dimensions, using the maximum likelihood (ML) method (see, for example, \citet{tsh,glmmn,vandervaart}).

	The classical framework of logistic regression described above is, however, inadequate if the independence assumption on the response variables is violated, which is often the case if the observations are collected over a temporal or spatial domain or on a social network. 
	The recent accumulation of dependent network data in modern applications has accentuated the need to develop realistic and mathematically tractable methods for modeling high-dimensional distributions with underlying network dependencies ({\it network peer-effect}). Towards this, the Ising model, initially proposed in statistical physics to model ferromagnetism \citep{ising}, has turned out to be a useful primitive for modeling such datasets, which arise naturally in spatial statistics, social networks, epidemic modeling, computer vision, neural networks, and computational biology, among others (see~\citet{spatial,geman_graffinge,disease,hopfield,innovations} and references therein). This can be viewed as a discrete exponential family with binary outcomes, wherein the sufficient statistic is a quadratic form designed to capture correlations arising from pairwise interactions. Formally, given an interaction matrix $A := ((a_{ij}))_{1 \leq i, j \leq N}$ and binary vector $\bm X=(X_1, X_2, \cdots, X_N) \in \cC_N=\{-1, 1\}^N$, the Ising model with parameters $\beta$ and $h$ encodes the dependence among the coordinates of $\bm X$ as follows: 
	\begin{equation}
		\mathbb P_{\beta,h}(\bm X)= \frac{1}{2^N \mathcal Z_N(\beta,h)} \exp\left\{\beta \sum_{1 \leq i, j \leq N} a_{ij} X_i X_j + h \sum_{i=1}^N X_i \right\},
		\label{pmf}
	\end{equation}
	where the {\it normalizing constant} $\mathcal Z_N(\beta,h)$ is determined by the condition $\sum_{\bm X\in \cC_N}\mathbb P_{\beta,h}(\bm X)=1$ (so that $\mathbb P_{\beta,h}$ is a probability measure). In statistical physics the parameters $\beta$ and $h$ are referred to as the \textit{inverse temperature} and the \textit{magnetic field}, respectively. 
		%\textcolor{magenta}{It is precisely due to this \textit{inverse temperature} interpretation of $\beta$, that this parameter is assumed to be non-negative in the physics literature. However, in statistical and social science applications, the notion of negative beta is also meaningful, as it indicates that every node receives negative influeces from its neighbours, a phenomenon referred to as antiferromagnetism in physics. Keeping this in mind, in this article, we assume that the parameter $\beta$ can be any real number (not necessarily non-negative).}

	This paper is motivated by applications where in addition to peer effects, arising from an underlying network structure, there are other variables (covariates) associated with the nodes of the network, which affect the outcome of the response variables. For example, in the data collected by the National Longitudinal Study \citep{network_data} students in grades 7--12 were asked to name up to 10 friends and answer many questions about age, gender, race, socio-economic background, personal and school life, and health, where it becomes imperative to model the peer-effect and the effect of the covariates simultaneously. Another example where high-dimensional covariates interact through an underlying network structure arises in spatial transcriptomics. This is a relatively new direction in biology made possible by technologies for massively parallelized measurement of cell transcriptomes/proteomes in situ that, unlike standard single cell sequencing methods, retains information regarding the spatial neighborhood of the cells. The spatial context of cells can be encoded into a nearest-neighbor graph or a Voronoi neighborhood graph/Delaunay triangulation \citep{computationalgeometry}, where the nodes are cells and edges link cells that are situated proximal to each other  \citep{t1,t2,t3,t4}. Each node has a high dimensional feature set, encoding the measurements made for that cell, be it gene expression or protein expression, depending on the experimental protocol. Then, the goal is to understand how the spatial niche of a cell contributes to its phenotype (see Section \ref{sec:st} for more details). For other examples of network peer-effect in the presence of covariates, see \citet{dependent_I,dependent_II,dependent_III,dependent_IV,dependent_V,dependent_VI} and references therein. 
	
	In the aforementioned examples, it is natural to envisage a model that combines the logistic model in \eqref{jointlogistic} (which encodes the node-specific covariates) 
	and the Ising model \eqref{pmf} (for capturing the network dependency). Towards this, \citet{cd_ising_II} recently proposed the following model: Suppose for each node $1 \leq i \leq N$ of a network $G_N$ on $N$ vertices, one observes a $d$-dimensional covariate $\bm Z_i \in \R^d$. Then, the joint distribution of the binary outcomes $\bm X=(X_1, X_2, \ldots, X_N)$, conditioned on the network $G_N$ and the observed covariates $\bm Z:=(\bm Z_1, \bm Z_2, \ldots, \bm Z_N)^\top$ is given by:  
	\begin{equation}\label{eq:logisticmodel}
		\bp(\bs \big| \bm Z) \propto \exp\left( \beta \bm X^{\top} \bm A \bm X + \sum_{i=1}^N X_i (\bm \theta^{\top} \bm Z_i)  \right)
	\end{equation}
	where $\bm A  =((a_{ij}))_{1 \leq i, j \leq N}$ is the (appropriately scaled) adjacency matrix of the network $G_N$, the parameter $\beta $ is a measure of the strength of dependence (the network `peer effect'), and the parameter $\bm \theta \in \R^d$ measures the individual effects of the $d$-covariates. Specifically, as $\beta$ becomes more positive, the outcomes of the nodes tend to align with those of their neighbors. On the other hand, when $\beta$ is negative (which is also allowed in our theoretical framework), every node receives negative influences from its neighbors, a phenomenon referred to as {\it antiferromagnetism} in the statistical physics literature. Note that as in the classical Ising model \eqref{pmf}, the quadratic term captures the overall dependency in the network arising from pairwise interactions, while the linear term $\bm \theta^\top \bm Z_i$ encodes the strength of the covariates on the outcome at the $i$-th node, for $1 \leq i \leq N$, as in the logistic model \eqref{jointlogistic}. Moreover, when $\beta=0$, which corresponds to the independent case, \eqref{eq:logisticmodel} reduces to \eqref{jointlogistic}; hence,  \eqref{eq:logisticmodel}  can be viewed as a model for logistic regression with dependent observations.

	The increasing relevance of models \eqref{pmf} and \eqref{eq:logisticmodel} for understanding covariate effects and nearest-neighbor interactions in network data, has made it imperative to develop computationally tractable methods for estimating the model parameters and understanding the statistical properties (rates of convergence) of the resulting estimates.  A typical problem of interest is estimating the parameters of the model given a single sample of binary outcomes from an underlying network.  For the classical Ising model as in \eqref{pmf}, this problem has been extensively studied, beginning with the classical results on consistency and optimality of the maximum likelihood (ML) estimates for models where the underlying network is a spatial lattice \citep{comets_exp,gidas,guyon,mrf_pickard}. However, for general networks, parameter estimation using the ML method turns out to be notoriously hard due to the presence of an intractable normalizing constant in the likelihood. One approach to circumvent this issue that has turned out to be particularly useful, is the {\it maximum pseudolikelihood} (MPL) estimator \citep{besag_lattice,besag_nl}. This provides a computationally efficient method for estimating the parameters of a  Markov random field  that avoids computing the normalizing constant, by maximizing an approximation to the likelihood function (a `pseudo-likelihood'),  based on conditional distributions. This approach was originally explored in the seminal paper of \citet{chatterjee}, where general conditions for  $\sqrt N$-consistency of the MPL estimate for the model \eqref{pmf} were derived.\footnote{
		%A sequence of estimators $\{\hat{\beta}_N\}_{N\geq 1}$ is said to be {\it consistent} at $\beta$, if $\hat{\beta}_N \pto \beta$ under $\P_{\beta}$, that is, for every $M > 0$, $\P_{\beta}(|\hat \beta_N(\bm X) - \beta| \leq M ) \rightarrow 1$ as $N \rightarrow \infty$. 
		A sequence of estimators $\{\hat{\beta}_N\}_{N\geq 1}$ is said to be $\sqrt N$-{\it consistent} at $\beta$, if for every $\delta > 0$, there exists $M:=M(\delta, \beta) > 0$ such that $\p(\sqrt N |\hat \beta_N(\bm X) - \beta| \leq M ) > 1-\delta$, for all $N$ large enough.} This result was subsequently extended to more general models in \citet{BM16,cd_ising_II,cd_ising_I,pg_sm,rm_sm,ising_testing,sm_js_bbb}. In particular, for model \eqref{eq:logisticmodel} \citet{cd_ising_I} showed that given a single sample of observations $(X_i, \bm Z_i)_{1 \leq i \leq N}$ from \eqref{eq:logisticmodel}, the MPL estimate of the parameters $(\beta, \bm \theta)$ is $\sqrt N$-consistent, when the dimension $d$ is \textit{fixed}, under various regularity assumptions on the underlying network and the parameters. This result has been subsequently extended to models with higher-order interactions in \citet{cd_ising_I}. The high-dimensional analogue of this problem under an $\ell_1$ norm constraint on the regression parameter has been studied very recently in \citet{cddependent}.
	%In another related direction, \citep{gb_testing} studied the problem of testing between the Curie-Weiss model (where all the pairwise interactions are present) and an Ising model on a regular graph. 

	In this paper, we consider the problem of parameter estimation in model \eqref{eq:logisticmodel} in the \textit{high-dimensional regime} with sparsity constraints, that is, the number of covariates $d$ grows with the size of the network $N$, but there are only a few non-zero regression coefficients. In other words, we assume that the parameter vector $\bm \theta$ is $s$-sparse, that is $||\bm \theta||_0 : = \sum_{i=1}^d \bm 1\{\theta_i \ne 0\} \leq s$, because, despite the fact that many covariates are available, we only expect a few of them to be relevant. Under this assumption, we want to estimate the parameters $(\beta, \bm \theta)$ given a \textit{single} sample of dependent observations $(X_i, \bm Z_i)_{1 \leq i \leq N}$ from model \eqref{eq:logisticmodel}. One of the main reasons this problem is especially delicate, is that we only have access to a \textit{single (dependent) sample} from the underlying model. Consequently, classical results from the $M$-estimation framework, which require \textit{multiple independent samples}, are inapplicable. 
	To deal with this dependence (which leads to the intractable normalizing constant as mentioned before) and the high-dimensionality of the parameter space, we propose a penalized maximum pseudo-likelihood approach for estimating the parameters. To this end, note that the conditional distribution of $X_i$ given $(X_j)_{j \ne i}$ is:  
	\begin{align} \label{eq:probability_conditional}
		\bp(X_i=1 | (X_j)_{j \ne i}, \bm Z) & =  \frac{e^{X_i \bm \theta^{\top} \bm Z_i  + \beta X_i m_i(\bm X) }}{ e^{\bm \theta^{\top} \bm Z_i  + \beta  m_i(\bm X)} + e^{-\bm \theta^{\top} \bm Z_i  - \beta  m_i(\bm X)} }, 
	\end{align}
	where, as before, $m_i(\bm X) = \sum_{j=1}^N a_{ij} X_j$ is the {\it local-effect} at node $i$, for $1 \leq i \leq N$. Therefore, by multiplying \eqref{eq:probability_conditional} over $1 \leq i \leq N$ and taking logarithms, we get the (negative) {\it log-pseudolikelihood} (LPL) function 
	\begin{align}\label{eq:LNbetatheta}
		L_N(\beta, \bm \theta) 
		&= - \frac{1}{N} \sum_{i=1}^{N}\log \bp(X_i=1\big|(X_j)_{j \ne i}, \bm Z) \nonumber \\ 
		& =- \frac{1}{N}\sum_{i=1}^N \{ X_i ( \bm \theta^{\top} \bm Z_i  + \beta  m_i(\bm X) ) - \log \cosh ( \bm \theta^{\top} \bm Z_i  + \beta m_i(\bm X) )\} + \log 2.
	\end{align} 
	To encode the sparsity of the high-dimensional parameters, we propose a {\it penalized maximum pseudo-likelihood} (PMPL) estimator of $(\beta, \bm \theta^\top)$, which, given a regularization (tuning) parameter $\lambda$, is defined as: 
	\begin{align}\label{eq:betatheta}
		(\hat \beta, \hat {\bm \theta}^\top): =  \mathrm{arg}\min_{(\beta, \bm \theta)} \{  L_N(\beta, \bm \theta )   + \lambda || \bm \theta ||_1\},
	\end{align} 
	where $||\bm \theta||_1 := \sum_{i=1}^d |\theta_i|$. Under various 
	%standard 
	regularity assumptions, we prove that if $\lambda$ is chosen proportional to $\sqrt{\log d/n}$, then with high probability,
	\begin{align}\label{eq:bt}
	\| (\hat \beta, \hat {\bm \theta}^\top)  - (\beta, \bm \theta^\top) \| \lesssim_s \sqrt{\log d/N},
	\end{align} 
	whenever $d \rightarrow \infty$ such that $d=o(N)$ (Theorem \ref{thm:estimate}). In particular, it follows from our results that for bounded degree networks, the PMPL estimate attains the same rate as in the independent logistic case \eqref{jointlogistic}, when $d=o(N)$ and the sparsity is bounded. We also have a more  general result that quantifies the dependence on the network sparsity in the rate \eqref{eq:bt}, which allows us to establish consistency of the PMPL estimate beyond bounded degree graphs (Proposition \ref{ppn:estimateF}). Our results are fundamentally different from existing results on parameter estimation in high-dimensional graphical models based on multiple i.i.d. samples 
	(see Section \ref{sec:structurelearning} for a review). Here, we only have access to a single sample from the model \eqref{eq:logisticmodel}, hence, unlike in the multiple samples case, one cannot treat the different neighborhoods in the network as independent, which renders classical techniques for proving consistency inapplicable. Consequently, to handle the dependencies among the different neighborhoods in the pseudo-likelihood function we need to use a different (non-classical) set of tools. Specifically, our proofs combine a conditioning technique introduced in \citet{cd_ising_estimation}, which tranforms a general Ising model to a model satisfying the Dobrushin condition (where the dependence is sufficiently weak), and the concentration inequalities for functions of dependent random variables in the Dobrushin regime, based on the method of exchangeable pairs \citep{scthesis}. 
	
	Next, we study the effect of dependence on estimating the regression parameters $\bm \theta$. Specifically, we want to understand how the presence of dependence through the underlying network structure effects the rate of estimation of the  high-dimensional regression coefficient under sparsity constraints. While there have been several recent attempts to understand the implications of dependence in high-dimensional inference, most of them have focused on Gaussian models. Going beyond Gaussian models, \citet*{rm_sm} and \citet{isingsignal} considered the problem of testing the global null hypothesis (that is, $\bh = \bm 0$) against sparse alternatives in a special case of model \eqref{eq:logisticmodel} (where $d=N$ and the design matrix $\bm Z= \bm I_N$ is the identity). However, to the best of our knowledge, the effect of dependence on parameter estimation in Ising models with covariates has not been explored before. In the sequel,  we establish that the PMPL estimate for $\bh$ in model \eqref{eq:logisticmodel}, given a dependence strength $\beta$ and the sparsity constraint $\norm{\bh}_0 = s$,  attains the classical $O({\sqrt{s\log d/N}})$ rate, \textit{despite the presence of dependence}, in the entire high-dimensional regime (where $d$ can be much larger than $N$) and also recovers the correct dependence on the sparsity $s$ (see Theorem \ref{thm:stheta}).  As a consequence, we establish that the PMPL estimate is $O(1/\sqrt N)$-consistent (up to logarithmic factors) for the model \eqref{eq:logisticmodel} in various natural sparse graph ensembles, such as  Erd\H os-R\'enyi random graphs and inhomogeneous random graphs that include the popular stochastic block model (Theorem \ref{thm:sparse} and Corollary \ref{inh}). We also develop a proximal gradient algorithm for efficiently computing the PMPL estimate and evaluate its performance in numerical experiments for Erd\H os-R\'enyi random graphs, inhomogeneous random graph models, such as the stochastic block model and the $\beta$-model, and the preferential attachment model (Section \ref{sec:computationandexperiments}). Finally, in Section \ref{sec:st}, we illustrate the effectiveness of the proposed model in selecting relevant genes for clustering spatial gene expression data.

	\subsection{Related Work}
	\label{sec:structurelearning}

	The asymptotic properties of penalized likelihood methods for logistic regression and generalized linear models in high dimensional settings have been extensively studied (see, for example, \citet{bach2010,florentina,kakade,meier2008group,salehi,vandegeer008,vandergreer14} and references therein). These results allow $d$ to be much bigger than $N$ and provide rates of convergence for the high-dimensional parameters under various sparsity constraints. The performance of the ML estimate in the logistic regression model when the dimension $d$ scales proportionally with $N$ has also been studied in a series of recent papers  \citep{sur1,sur2,sur3}. However, as discussed earlier, ML estimation is both computationally  and mathematically intractable in model \eqref{eq:logisticmodel}, because of the dependency induced by the underlying network. That we are able to recover rates similar to those in the classical high-dimensional logistic regression, in spite of this underlying dependency, using the PMPL method, is one of the highlights of our results.

	The problem of estimation and structure learning in graphical models and Markov random fields is another related area of active research. Here, the goal is to estimate the model parameters or learn the underlying graph structure given access to {\it multiple} i.i.d. samples from a graphical model. For more on these results refer to  \citet{structure_learning,bresler,bresler_karzand,discretetree,jointestimationGLMZ,graphical_models_algorithmic,klivansmeka,highdim_ising,graphicalmodelsbinary,vmlc16} and references therein. In particular,  \citet{highdim_ising} and \citet{ising_nonconcave}  use a regularized pseudo-likelihood approach to learn the structure of the interaction $\bm A$ given \textit{multiple} i.i.d. samples from an Ising model. In a related direction, \citet{cd_testing} studied the related problems of identity and independence testing, and \citet{high_tempferro,neykovliu_property} considered problems in graph property testing,  given access to multiple samples from an Ising model.

	All the aforementioned results, however, are in contrast with the present work, where the underlying graph structure is assumed to be {\it known} and the goal is to derive rates of estimation for the parameters given a {\it single} sample from the model. This is motivated by the applications mentioned above where it is often difficult, if not impossible, to generate many independent samples from the model within a reasonable amount of time. More closely related to the present work are results of \citet{bayesianvariableselection} and \citet{spatialbayesianvariableselection} on Bayesian methods for variable selection in high-dimensional covariate spaces with an underlying network structure, where an Ising prior is used on the model space for incorporating the structural information. Recently, \citet{isingbayesestimation} developed a variational Bayes procedure using the pseudo-likelihood for estimation based on a single sample in a two-parameter Ising model. Convergence of coordinate ascent algorithms for mean field variational
	inference in the Ising model has been recently analyzed in \citet{isingvariantionalinference}. 
	
	For continuous response with an underlying network/spatial dependence structure, a related model is the well-known spatial autoregressive (SAR) model and its variants, where likelihood estimation based on conditional distributions have been used as well  (see \cite{huang2019least,lee2004asymptotic,lee2010specification,zhu2020multivariate} and the references therein). 
	
	\subsection{Notation}
	%\label{sec:notations}
	
	The following notation will be used throughout the remainder of the paper. For a vector $\bm a := (a_1,\ldots,a_s) \in \mathbb{R}^s$ and $0 < p < \infty$, $\|\bm a\|_p := \left(\sum_{i=1}^s |a_i|^p\right)^\frac{1}{p}$ denotes its $p$-th norm and $\|\bm a\|_\infty := \max_{1\le i\le s} |a_s|$ its maximum norm, respectively. Moreover, 
	$\|\bm a\|_0 := \sum_{i=1}^s \bm{1}\{a_i \ne 0\}$ denote the `zero-norm' of $\bm a$, which counts the number of non-zero coordinates of $\bm a$.

	For a matrix $\bm M := ((M_{ij}))_{1\le i \le s,1\le j \le t} \in \mathbb{R}^{s\times t}$ we define the following norms: 
	\begin{itemize}
		\item $\|\bm M\|_F := \sqrt{\sum_{i=1}^s \sum_{j=1}^t M_{ij}^2}$, 
		
		\item $\|\bm M\|_\infty := \max_{1\le i\le s} \sum_{j=1}^t |M_{ij}|$ ,
		
		\item $\|\bm M\|_1 := \max_{1\le j\le t} \sum_{i=1}^s |M_{ij}|$, 
		
		\item $\|\bm M\|_2 := \sigma_{\max}(\bm M)$, where $\sigma_{\max}(\bm M)$ denotes the largest singular value of $\bm M$. 
		
	\end{itemize}
	Note that if $\bm M$ is a square matrix, then $\|\bm M\|_2= \max\{|\lambda_{\min}(\bm M)|, |\lambda_{\max}(\bm M)| \}$, where $\lambda_{\max}(\bm M)$ and $\lambda_{\min}(\bm M)$ denote the maximum and minimum eigenvalues of $\bm M$, respectively. 
	
	For positive sequences $\{a_n\}_{n\geq 1}$ and $\{b_n\}_{n\geq 1}$, $a_n = O(b_n)$ means $a_n \leq C_1 b_n$, $a_n =  \Omega(b_n)$ means $a_n \geq C_2 b_n$, and $a_n =  \Theta(b_n)$ means $C_2 n \leq a_n \leq C_1 b_n$, for all $n$ large enough and positive constants $C_1, C_2$. Similarly, $a_n \lesssim b_n$ means $a_n = O(b_n)$, $a_n \gtrsim b_n$ means $a_n = \Omega(b_n)$. Moreover, subscripts in the above notations,  for example  $\lesssim_\square$, $O_\square$ and $\Omega_\square$  denote that the hidden constants may depend on the subscripted parameters $\square$. 
	%Finally, for positive sequences $\{a_n\}_{n\geq 1}$ and $\{b_n\}_{n\geq 1}$, $a_n \ll b_n$ means $a_n =o(b_n)$ and $a_n \gg b_n$ means $b_n  = o(b_n)$. 
	Finally, we say that $a_n = \tilde{O}(b_n)$, if $a_n \le C_1 (\log n)^{r_1} b_n$ and $a_n = \tilde{\Theta}(b_n)$, if $ C_2 (\log n)^{r_2} b_n \leq a_n \le C_1 (\log n)^{r_1} b_n$, for all $n$ large enough and some positive constants $C_1, C_2, r_1, r_2$.

	\subsection{Organization}
	
	The remainder of the paper is organized as follows. The rates of consistency of the estimates are presented in Section \ref{sec:covmain}. In Section \ref{sec:examples}, we apply our results to various common network models. The algorithm for computing the estimates and simulation results are presented in Section \ref{sec:computationandexperiments}. The proofs of the technical results are given in the Appendix. 
	
	%Sections \ref{prfinalmain}, \ref{sec:sthetapf}, and \ref{sec:graphestimatepf}. Proofs of various technical lemmas are given in the Appendix.

	\section{Rates of Consistency}\label{sec:covmain}

	Next, we present our results on rates of convergence of the PMPL estimator. In Section \ref{sec:mplconsistency},  we present the rates of convergence of the PMPL estimates $(\hat \beta, \hat {\bm \theta}^\top)$. The rate for estimating the regression parameters is presented in Section \ref{sec:estimationtheta}. 
	%We illustrate the applicability of our results on various graph ensembles in Section \ref{sec:examples}. 

	\subsection{Consistency of the PMPL Estimate} 
	\label{sec:mplconsistency} 
	
	We begin by stating the relevant assumptions: 
	
	\begin{assumption}\label{asm:a1} The interaction matrix $\bm A$ in \eqref{eq:logisticmodel} satisfies the following comdition:
		$$\sup_{N \ge 1} \|\bm A\|_\infty < \infty.$$
	\end{assumption}

	\begin{assumption}\label{asm:a2} The design matrix  $\bm Z := (\bm Z_1,\ldots, \bm Z_N)^\top$ satisfies 
		$$\liminf_{N\rightarrow\infty} \lambda_{\min}\left(\frac{1}{N} \bm Z^\top \bm Z \right) > 0.$$	  
	\end{assumption}

	\begin{assumption}\label{asm:a3} The signal parameters $\bh$ and the covariates $\{\bm Z_i\}_{1 \leq i \leq N}$ are uniformly bounded, that is, there exist positive constants $\Theta$ and $M$ such that $\|\bh\|_\infty < \Theta$ and $\|\bm Z_i\|_\infty < M$, for all $1\le i\le N$. 
	\end{assumption}
	
	Under the above assumptions we establish the rate of convergence of the PMPL estimate \eqref{eq:betatheta} given a single sample of observations from the model \eqref{eq:logisticmodel}, when the parameter vector is sparse, that is, $\| (\beta, \bh^\top)^\top \|_0 = s$. For notational convenience, we henceforth denote the $(d+1)$-dimensional vector of parameters by $\bg:=(\beta, \bh^\top)^\top$ and the  $(d+1)$-dimensional vector of PMPL estimates obtained from \eqref{eq:betatheta} by $\hat{\bg} = (\hat{\beta}, \hat{\bh}^\top)^\top$.

\begin{theorem}\label{thm:estimate} 
 Suppose that Assumptions \ref{asm:a1}, \ref{asm:a2}, \ref{asm:a3} hold, and $\liminf_{N \rightarrow \infty} \frac{1}{N} \|\bm A\|_F^2 > 0$.  Then, there exists a constant $\delta > 0$ such that by choosing $\lambda := \delta \sqrt{\log(d+1)/N}$ in the objective function in \eqref{eq:pseudolikelihood} we have,
	\begin{align}\label{eq:ratelogdN}
	\|\hat{\bg} - \bg \|_2 = O_s\left(\sqrt{\frac{\log d}{N}}\right)  \quad \text{ and } \quad \|\hat{\bg} - \bg \|_1  = O_s\left(\sqrt{\frac{\log d}{N}}\right) ,
	\end{align} 
	with probability $1-o(1)$, as $N\rightarrow \infty$ and $d \rightarrow \infty$ such that $d=o(N)$. 
\end{theorem}

	The conditions in Theorem \ref{thm:estimate} combined aim to strike a balance between the signal from the peer-effects to that from the covariates, to ensure consistent estimation for all $\beta$. In particular, a control on $\|\bm A\|_\infty$ is required to ensure that the peer effects coming from the quadratic dependence term in the probability mass function \eqref{eq:logisticmodel} do not overpower the effect of the signal $\bh$ coming from the linear terms $\bh^\top \bm Z_i$, thereby hindering joint recovery of the correlation term $\beta$ and the signal term $\bh$. At the same time, we also require the interaction matrix to be not too sparse, and its entries to be not too small, in order to ensure that the effect of the correlation parameter $\beta$ is not nullified. This is guaranteed by the condition $\|\bm A\|_F^2 = \Omega(N)$. For example, when $\bm A$ is the scaled adjacency of a graph $G_N$, then Assumption \ref{asm:a1} together with the condition $\|\bm A\|_F^2 = \Omega(N)$ implies that $G_N$ has bounded maximum degree (see Section \ref{sec:examples}). In fact, in the proof we keep track of the dependence on $\|\bm A\|_F$ in the error rate (see Proposition \ref{ppn:estimateF} in Section \ref{prfinalmain}), which allows us to establish consistency of the PMPL estimate beyond bounded degree graphs (see Section \ref{sec:GN} for details). 
	
\begin{rem} {\em It is worth noting that it may be impossible to estimate $(\beta,\bh)$ consistently without any diverging lower bound on $\|\bm A\|_F^2 = \Omega(N)$ or, in other words, if the graph $G_N$ is too dense. This phenomenon is observed in the Curie-Weiss Model (where the interaction matrix  $a_{ij}=1/N$, for $1 \leq i \ne j \leq N$) \citep{comets}. In this example, each entry of $\bm A$ is $O(1/N)$, and hence, $\|\bm A\|_F^2 = O(1)$, and even when $d=1$ (and $\bm Z_1=\ldots =\bm Z_N$)  consistent estimation of the parameters $\beta$ and $\theta$ is impossible (see  Theorem 1.13 in \citet{pg_sm}). } 
	\end{rem}

The proof of Theorem \ref{thm:estimate} is given in Section \ref{prfinalmain}. As mentioned before, this is a consequence of a more general result which gives rates of convergence for the PMPL estimate in terms of the $\|\bm A\|_F$ (Proposition \ref{ppn:estimateF}). 
		%\textcolor{red}{Have to change this description.} 
	%The proof of Theorem \ref{thm:estimate} is given in Section \ref{prfinalmain}. 
	Broadly speaking, the proof involves the following two steps: 
	
	\begin{itemize} 
		
		\item {\it Concentration of the gradient}:  The first step in the proof of Theorem \ref{thm:estimate} is to show that the gradient of the logarithm of the pseudo-likelihood function $L_N$ (recall \eqref{eq:LNbetatheta}) is concentrated around zero in the $\ell_\infty$ norm. For this step, we use the conditioning trick introduced in \citet{cd_ising_estimation}, which reduces a general Ising model to an Ising model in the high-temperature regime, where exponential concentration inequalities for functionals of Ising models are available \citep{scthesis}. The details are formalized in Lemma \ref{eq:gradientestimate}. 
		
		\item {\it Strong-concavity of the pseudo-likelihood}:  In the second step, we show that the logarithm of the pseudo-likelihood function is strongly concave with high probability. This entails showing that the lowest eigenvalue of the Hessian of $L_N$ is bounded away from zero with high probability. Towards this, the minimum eigenvalue condition in Assumption \ref{asm:a2}, which is standard in the high-dimensional literature (see \citet{loh1,highdim_ising}), is crucial. In particular, this condition holds with high probability, if the covariates $\bm Z_1,\ldots,\bm Z_N$ are i.i.d. realizations from a sub-Gaussian distribution on $\mathbb{R}^d$ (see Theorem 2.1 in \citet{cd_ising_I}). Under Assumption \ref{asm:a2} and using lower bounds on the variance of linear projections of $\bm X$ developed in \citet{cd_ising_estimation} and concentration  results from \citet{radek}, we establish the strong-concavity of the pseudo-likelihood in Lemma \ref{rsccond}. 

	\end{itemize}

	\begin{rem}\label{remark:sdependence} 
		{\em Note that the rate in \eqref{eq:ratelogdN} suppresses the dependence on the sparsity parameter $s$ in the order term. From the proof of Theorem \ref{thm:estimate}, it will be seen that the dependence is, in general, exponential in $s$. However, if one replaces Assumption \ref{asm:a3} with the stronger assumption that the $\ell_2$ norms of the parameters and the covariates are \textit{bounded}, that is, $\|\bh\|_2 < \Theta$ and $\|\bm Z_i\|_2 < M$, for all $1\le i\le N$, then our proof can be easily modified to recover the standard high-dimensional  $O(\sqrt{s \log d /N})$ rate (see Remark \ref{remark:M}). In fact, this stronger assumption has been used recently in \citet{cddependent} to derive rates of the pseudo-likelihood estimate under $\ell_1$ sparsity.  Specifically, \citep[Theorem 2]{cddependent} showed that if $\| \bg \|_1 \leq s$ and the parameters and the covariates are $\ell_2$-bounded, their estimate $\tilde \bg$ under Assumption \ref{asm:a1} satisfies: 
			$$\| \tilde \bg - \bg \|_2 = O\left( \left(\frac{s \log d}{N}\right)^{\frac{1}{6}} \right) , $$ 
			with high probability. Note that the dependence on $N$ in the RHS above is worse than the expected $1/\sqrt N$-rate. Moreover, the $\ell_2$-boundedness of the covariates is quite restrictive in the high-dimensional setup. On the other hand, this work derives rates under $\ell_0$ sparsity and a more realistic $\ell_\infty$-bounded condition (Assumption \ref{asm:a3}). Under this condition, we are able to derive the correct dependence on $N$ and $d$ (and also on $s$, if the stronger $\ell_2$-boundedness is imposed as mentioned above) in the regime where $d=o(N)$. An exponential dependence on $d$ also appears in \citet{cd_ising_I} (see footnote in page 4), where the convergence rate of the MPL estimate is derived in the fixed $d$ regime. This rate can be improved to $O(\sqrt{d/N})$ under the $\ell_2$- boundedness assumption (see, for example, \citet{cd_ising_II}). Our results show that this can be further improved to $O(\sqrt{\log d/N})$ in the regime of constant sparsity.} 
	\end{rem}

	\subsection{Estimation of the Regression Coefficients}
	\label{sec:estimationtheta}

	In this section, we consider the problem of estimating the regression coefficients $\bh$, for fixed $\beta$. The goal is to understand how network dependence may	affect our ability to estimate the high-dimensional regression coefficient under sparsity constraints. Towards this, we study the properties of the following PMPL estimator for the regression coefficients $\bh$: 
	\begin{align}\label{eq:pltheta}
		\hat \bh :=  \mathrm{arg}\min_{ \bm \theta } \{  L_{\beta, N}( \bm \theta )   + \lambda || \bm \theta ||_1\}, 
	\end{align} 
	where $\lambda$ is a regularization parameter and (recalling \eqref{eq:LNbetatheta}) 
	\begin{align}\label{eq:LNtheta}
		L_{\beta, N}(\bm \theta) & =- \frac{1}{N}\sum_{i=1}^N \{ X_i ( \bm \theta^{\top} \bm Z_i  + \beta  m_i(\bm X) ) - \log \cosh ( \bm \theta^{\top} \bm Z_i  + \beta m_i(\bm X) )\} + \log 2.
	\end{align}

	To handle the high-dimensional regime, we need to make the following assumption on the  design matrix  $\bm Z := (\bm Z_1,\ldots, \bm Z_N)^\top$. To this end, we define the Rademacher complexity of the $\{\bm Z_i\}_{1 \leq i \leq N}$ as: 
	\begin{align}\label{eq:RZ} 
		\mathcal R_N: = \mathbb{E}\left(\left\|\frac{1}{N}\sum_{i=1}^{N}\varepsilon_i\Z_i\right\|_{\infty}\right) , 
	\end{align} 
	where $\{\varepsilon_i\}_{1 \leq i \leq N}$ is a sequence of i.i.d. Rademacher random variables and the expectation in \eqref{eq:RZ} is taken jointly over the randomness of $\{\bm Z_i\}_{1 \leq i \leq N}$ and 
	$\{\varepsilon_i\}_{1 \leq i \leq N}$.

	\begin{assumption}\label{asm:RSC_random} 
		Suppose the covariates $\Z_1, \Z_2, \ldots, \Z_N$ are drawn i.i.d. from a distribution  with mean zero and satisfying the following conditions: 
		
		\begin{itemize} 
			
			\item[$(1)$] There exist positive constants $\kappa_1, \kappa_2$ such that 
			\begin{align*}
				\mathbb{E}\left(\langle\bm \eta,\Z_1 \rangle^{2}\right)\geq \kappa_1 \text{ and } \mathbb{E}\left(\langle\bm \eta,\Z_1 \rangle^{4}\right)\leq \kappa_2,\end{align*}
			for all $\bm \eta\in\mathbb{R}^{d}$ such that $\|\bm \eta\|_2=1$. 
			
			\item[$(2)$] $\mathcal R_N =O(\sqrt{\log d/N})$.  
			
			\item[$(3)$] There exists a constant $C > 0$ such that $\max_{1 \leq j \leq d } \frac{1}{N} \sum_{i=1}^N Z_{ij}^2 \leq C$ holds with probability 1. 
			
		\end{itemize} 
		
	\end{assumption}
	
	%
	%\begin{assumption}\label{asm:a4} There exists a constant $\kappa > 0$ such that 
	%$$\liminf_{N \rightarrow \infty} \inf_{\bm \theta \in \mathcal C(S)} \frac{ \| \bm Z \bm \theta \|_2^2 }{N \|  \bm \theta \|_2^2 } > \kappa,$$	
	%where $S$ denotes the support of the true parameter $\bm \theta$ and $\mathcal C(S) := \{\bm \Delta \in \mathbb R^d: \norm{\bm \Delta_{S^c}}_1 \leq 3 \norm{\bm \Delta_{S}}_1 \}$. \end{assumption}
	%

	These types of conditions are standard in the high-dimensional statistics literature (see~\citet{bickelritovtsybakov,candestao,ny,Mestimation,rwy,geerbuhlmann} and references therein), which are known to hold for many natural classes of design matrices. For example, if $\Z_1, \Z_2, \ldots, \Z_n$ are i.i.d. sub-Gaussian random variables with mean zero, then \citet[Exercise 9.8]{wainwrighthighdimensional} implies that $\mathcal R_N =O(\sqrt{\log d/N})$.

	\begin{rem} 
		{\em 
			%Suppose $\Z_1, \Z_2, \ldots, \Z_n$ are i.i.d Gaussian with mean zero and covariance matrix $\Sigma$, such that $c_{*} \leq\lambda_{\min}(\Sigma)\leq\lambda_{\max}(\Sigma)\leq c^{*}$, where $c_{*},c^{*}$ are two positive constants. 
			As mentioned before, when $\Z_1, \Z_2, \ldots, \Z_n$ are i.i.d Gaussian with mean zero and covariance matrix $\Sigma$, then Assumption \ref{asm:RSC_random} (2) holds (by 
			\citet[Exercise 9.8]{wainwrighthighdimensional}). To understand when Assumption \ref{asm:RSC_random} (1) holds, note that for $\bm \eta \in\mathbb{R}^{d}$ such that $\|\bm \eta\|_2=1$ we have 
			$$\mathbb{E}\left(\langle\bm \eta,\Z\rangle^{2}\right) = \mathbb{E}( N(0, \bm \eta^\top \Sigma \bm \eta )^2 ) =  \bm \eta^\top \Sigma \bm \eta  \geq\lambda_{\min}(\Sigma)$$ 
			and 
			$$\mathbb{E}\left(\langle\bm \eta,\Z\rangle^{4}\right)=\mathbb{E}( N(0, \bm \eta^\top \Sigma \bm \eta )^4 )  = 3 ( \bm \eta^\top \Sigma \bm \eta )^4 \leq 3 \lambda^{2}_{\max}(\Sigma) , $$ 
			where $\lambda_{\min}(\Sigma)$ and $\lambda_{\max}(\Sigma)$ are the minimum and maximum eigenvalues of $\Sigma$, respectively. 
			Therefore, Assumption \ref{asm:RSC_random} (1) holds, if we assume that there exist positive constants $c_{*},c^{*}$, such that the covariance matrix $\Sigma$ satisfies $c_{*}\leq\lambda_{\min}(\Sigma)\leq\lambda_{\max}(\Sigma)\leq c^{*}$. 
		} 
	\end{rem}

	Under the above assumptions, we now show in the following theorem that one can consistently estimate the regression parameters of the model \eqref{eq:logisticmodel} at the same rate as the classical (independent) logistic regression model \eqref{jointlogistic}.

	\begin{theorem}\label{thm:stheta}
		Fix $\beta \in \mathbb{R}$. Suppose the interaction matrix $\bm A$ in \eqref{eq:logisticmodel} satisfies Assumptions \ref{asm:a1} and the covariates $\Z_1, \Z_2, \ldots, \Z_N$ satisfy Assumption \ref{asm:RSC_random}. Moreover, assume that there exists a positive constant $\Theta$ such that $\|\tht\|_2 \leq \Theta$. Then, there exists a constant $\delta > 0$ such that by choosing $\lambda := \delta \sqrt{\log d/N}$ in the objective function in \eqref{eq:pltheta} we have, 
		\begin{align*}%\label{eq:theta}
			\|\hat{\bh} - \bh \|_2 = O\left(\sqrt{\frac{s \log d}{N}}\right) \quad \text{ and } \quad \|\hat{\bh} - \bh \|_1 = O\left(s \sqrt{\frac{\log d}{N}}\right),
		\end{align*} 
		with probability $1-o(1)$, as $N, d\rightarrow \infty$ such that $s \sqrt{\log d/N} = o(1)$. 
	\end{theorem}

	The proof of Theorem \ref{thm:stheta} is given in Section \ref{sec:sthetapf} in the Appendix. 
	We follow the strategy outlined in \citet[Chapter 9]{wainwrighthighdimensional} for showing rates of consistency for high-dimensional generalized linear models. In particular, we show that the  pseudo-likelihood loss function satisfies the restricted strong concavity condition (Proposition \ref{proposition:RSC_random}) under Assumption \ref{asm:RSC_random}. Consequently, we can establish the consistency of the PMPL estimate of the regression parameters in the entire high-dimensional regime (where $d$ can be much larger than $N$) and also recover the correct dependence on the sparsity $s$.

	\begin{rem} {\em Note that, unlike in Theorem \ref{thm:estimate}, the Frobenius norm assumption $\|\bm A\|_F^2 = \Omega(N)$, is not required in Theorem \ref{thm:stheta}. In particular, the only assumption on $\bm A$
one needs in Theorem \ref{thm:stheta} is $\| \bm A \|_\infty < 1$ (Assumption \ref{asm:a1}). For example, when $\bm A$ is the scaled adjacency matrix of a graph $G_N$, the assumption $\| \bm A \|_\infty < 1$ is equivalent to the maximum degree of $G_N$ being of the same order as the average degree of  $G_N$ (see \eqref{eq:degree1}). 
%In other words, Theorem \ref{thm:stheta} holds for graphs which are `approximately' regular, irrespective of the total edge density. 
This is expected because when $\beta$ is known, the parameter $\bm \theta$ can be estimated at the classical high-dimensional rate, irrespective of the total edge density of the network, as long  as the peer effects coming from the quadratic dependence term do not overshadow effect of the linear term $\bm \theta^\top \bm Z$, which is ensured by the condition $\| \bm A \|_\infty < 1$. This condition, in particular, implies that no node of the network has an unduly large effect on the corresponding model, and is satisfied by most Ising models that are commonly studied in the literature. 
} 
	\end{rem}
	
%	
%	\begin{remark} 
%		{\em 
%			%Suppose $\Z_1, \Z_2, \ldots, \Z_n$ are i.i.d Gaussian with mean zero and covariance matrix $\Sigma$, such that $c_{*} \leq\lambda_{\min}(\Sigma)\leq\lambda_{\max}(\Sigma)\leq c^{*}$, where $c_{*},c^{*}$ are two positive constants. 
%			As mentioned before, when $\Z_1, \Z_2, \ldots, \Z_n$ are i.i.d Gaussian with mean zero and covariance matrix $\Sigma$, then Assumption \ref{asm:RSC_random} (2) holds (by 
%			\citet[Exercise 9.8]{wainwrighthighdimensional}). To understand when Assumption \ref{asm:RSC_random} (1) holds note that, for $\bm \eta \in\mathbb{R}^{d}$ such that $\|\bm \eta\|_2=1$ we have 
%			$$\mathbb{E}\left(\langle\bm \eta,\Z\rangle^{2}\right) = \mathbb{E}( N(0, \bm \eta^\top \Sigma \bm \eta )^2 ) =  \bm \eta^\top \Sigma \bm \eta  \geq\lambda_{\min}(\Sigma)$$ 
%			and 
%			$$\mathbb{E}\left(\langle\bm \eta,\Z\rangle^{4}\right)=\mathbb{E}( N(0, \bm \eta^\top \Sigma \bm \eta )^4 )  = 3 ( \bm \eta^\top \Sigma \bm \eta )^4 \leq 3 \lambda^{2}_{\max}(\Sigma) , $$ 
%			where $\lambda_{\min}(\Sigma)$ and $\lambda_{\max}(\Sigma)$ are the minimum and maximum eigenvalues of $\Sigma$, respectively. 
%			Therefore, Assumption \ref{asm:RSC_random} (1) holds if we assume that there exists positive constants $c_{*},c^{*}$ such that the covariance matrix $\Sigma$ satisfies $c_{*}\leq\lambda_{\min}(\Sigma)\leq\lambda_{\max}(\Sigma)\leq c^{*}$. 
%		} 
%	\end{remark}
%	

	\section{Application to Various Network Structures}\label{sec:examples}

	In this section, we apply Theorem \ref{thm:estimate} to establish the consistency of the PMPL estimator \eqref{eq:pseudolikelihood} for various natural network models. To this end, let $G_N=(V(G_N), E(G_N))$ be a sequence of graphs with $V(G_N)=[N]:=\{1, 2, \ldots, N\}$ and adjacency matrix $\cA(G_N)=((a_{ij}))_{1 \leq i, j \leq N}$. We denote by $d_v$ the degree of the vertex $v \in V(G_n)$.  To ensure that the model \eqref{eq:logisticmodel} has non-trivial scaling properties, one needs to chose the interaction matrix $\bm A$ as the scaled adjacency matrix of $G_N$. In particular, define 
	\begin{equation}\label{eq:graph}
		\bm A_{G_N} := \frac{N}{2|E(G_N)|} \cdot \mathcal{A}(G_N)
	\end{equation}
	Throughout this section, we consider model \eqref{eq:logisticmodel} with $\bm A=\bm A_{G_N}$ as above. We also assume that the number of non-isolated vertices in $G_N$ (that is, the number of vertices in $G_N$ with degree at least 1) is $\Omega(N)$. Note that this implies, $|E(G_N)| \gtrsim N$. Finally, we also assume that the sparsity $s=O(1)$ and consequently, absorb the dependence on $s$ in the $O$-terms (recall Remark \ref{remark:sdependence}).

	\subsection{Bounded Degree Graphs}

	A sequence of graphs $\{G_N\}_{N \geq 1}$ is said to have bounded maximum degree if its maximum degree is uniformly bounded, that is, 
	$\sup_{N \geq 1} d_{\mathrm{max}} < \infty$, where $d_{\mathrm{max}} := \max_{v \in V(G_n)} d_v$ is the maximum degree of $G_N$. Note that if $G_N$ has bounded maximum degree and has $\Omega(N)$ non-isolated vertices, then $|E(G_N)| = \Theta(N)$.

	Networks arising in certain applications, especially those with an underlying spatial or lattice structure generally have bounded degree. These include planar maps which encode neighborhood relations \citep{planar1,planar2}, lattice models for capturing nearest-neighbor interactions between image pixels, and demand-aware networks \citep{demandaware} among others. It is easy to check that the conditions in Assumption \ref{asm:a1} are satisfied for bounded degree graphs. Towards this, note that, under the scaling in \eqref{eq:graph}, the condition $\sup_{N\ge 1} \|\bm A\|_\infty < \infty$ is equivalent to $ \|\bm A_{G_N}\|_\infty = \frac{N}{2|E(G_N)|}  \max_{v \in V(G_n)} d_v$. Hence, under the scaling in \eqref{eq:graph}, the assumption $\sup_{N\ge 1} \|\bm A_{G_N}\|_\infty < \infty$ is equivalent to
	\begin{align}\label{eq:degree1}
		d_{\max} := \max_{v \in V(G_n)} d_v= O\left(\frac{|E(G)|}{N}\right), 
	\end{align}  
	that is, the maximum degree of $G_N$ is of the same order as its average degree. Moreover, the condition $\liminf_{N\rightarrow \infty} \frac{1}{N} \|\bm A_{G_N}\|_F^2 > 0$ is equivalent to 
	\begin{align}\label{eq:degree2}
		\limsup_{N \rightarrow \infty} \frac{|E(G_N)|}{N} < \infty,
	\end{align} 
	that is, the average degree of $G_N$ is bounded. Therefore, \eqref{eq:degree1} and \eqref{eq:degree2} are together equivalent to the condition that $G_N$ has bounded maximum degree. Hence, the PMPL estimate is $\sqrt{\log d /N}$-consistent for any sequence of graphs of  bounded maximum degree, whenever the assumptions in Theorem \ref{thm:estimate} hold.

	\subsection{Sparse Inhomogeneous Random Graphs} 
	%\label{sec:sparsegraph}
	
	Although Theorem \ref{thm:estimate} requires that the maximum degree of $G_N$ has to be of the same order as the average degree (see \eqref{eq:degree1}), our proofs can be easily adapted to establish similar rates of consistency of the PMPL estimate (up to $\mathrm{polylog}(N)$ factors), if the maximum degree $G_N$ grows poly-logarithmically with respect to the average degree, which, in particular, is the case for sparse inhomogeneous random graphs. This is summarized in the following result. The proof is given in Section \ref{proof_sparse} of the Appendix.

	\begin{theorem}\label{thm:sparse} Suppose $\{G_N\}_{N \geq 1}$ is a sequence of graphs with $|E(G_N)| = O(N)$, $d_{\max} = \te(1)$, and $\Omega(N)$ non-isolated vertices. Then for $\lambda = \tilde \Theta \left( \sqrt{\log d/N}\right)$, 
		\begin{align*}%\label{eq:degreeG}
			\| \hat{\bg} - \bg \|_2 = \te\left( \frac{1}{\sqrt N}\right) 
		\end{align*}
		with probability $1-o(1)$ as $N,d \rightarrow \infty$ such that $d = o(N)$. 
	\end{theorem}

	%Theorem \ref{thm:sparse} is proved in Section \ref{proof_sparse}. Note that Theorem \ref{thm:sparse} shows that the PMPL estimator is $\sqrt N$-consistent (up to $\mathrm{polylog}$ factors) whenever the maximum degree of $G_N$ grows poly-logarithmically with the number of vertices $N$. 
	%In fact, from the proof of  Theorem \ref{thm:sparse}, it becomes evident that the PMPL estimator remains consistent even if the maximum degree grows at a slow polynomial rate (that is, $d_{\mathrm{max}} = O(N^\alpha)$ for $\alpha>0$ sufficiently small), albeit with an additional polynomial (in $N$) factor in the rate of convergence. 

	Theorem \ref{thm:sparse} can be applied to obtain rates of convergence in sparse inhomogeneous random graph models. 
	
	\begin{definition} \citep{bollobas} 
		Given a symmetric matrix $\bm P^{(N)} = ((p_{uv})) \in [0, 1]^{N \times N}$ with zeroes on the diagonal, the {\it inhomogeneous random graph} $\mathcal G(N, \bm P^{(N)})$ is the graph with vertex set $[N]:=\{1, 2, \ldots, N\}$ where the edge $(u, v)$ is present with probability $p_{uv} $, independent of the other edges, for every $1\le u < v \le N$. 
	\end{definition}

	The class of  inhomogeneous random graph models defined above includes several popular network models, such as the Chung-Lu model \citep{degree_network}, the $\beta$-model \citep{chatterjee_degree}, random dot product graphs \citep{young2007random,tang17}, and stochastic block models \citep{holland1983stochastic,lei16}. %For this class of random graph models we establish the following result: 
	Next, we consider the sparse regime wherein  
	\begin{align}\label{eq:P1} 
		\max_{1\le u, v \le N} p_{uv}  = O\left(\frac{1}{N}\right). 
	\end{align} 
	In this regime, the expected degree remains bounded, although the maximum degree can diverge at rate $O(\log N)$ \citep{eigenerdos,largest_eigenvalue}. We will also assume that there exists $\varepsilon \in (0,1)$ and $\Omega(N)$ vertices $u \in G_N$, such that
	\begin{align}\label{eq:P2}
		\limsup_{N\rightarrow \infty} \prod_{v=1}^N \left(1- p_{uv}\right) < \varepsilon. 
	\end{align} 
	This will ensure $\mathcal G(N, \bm P^{(N)})$ has $\Omega(N)$ non-isolated vertices. Under these assumptions we have the following result:

	\begin{corollary}\label{inh}
		Suppose $G_N$ is a realization of the inhomogeneous random graph  $\mathcal G(N, \bm P^{(N)})$, where $\bm P^{(N)}$ satisfies the conditions in \eqref{eq:P1} and \eqref{eq:P2}. Then for $\lambda = \tilde \Theta \left( \sqrt{\log(d+1)/N}\right)$, 
		$$\| \hat{\bg} - \bg \|_2 = \te\left( \frac{1}{\sqrt{N}}\right),$$ 
		with probability $1-o(1)$ as $N,d \rightarrow \infty$ such that $d = o(N)$. 
	\end{corollary}
	
	Corollary \ref{inh} is proved in Section \ref{sec:inhomogeneouspf} of the Appendix. In the following example, we illustrate how it can be applied to sparse stochastic block models, in particular, sparse Erd\H{o}s-R\'enyi random graphs.

	\begin{example}[Sparse stochastic block models] {\em Fix $K\geq 1$,  a vector of community proportions $\bm \lambda := (\lambda_1,\ldots,\lambda_K) \in (0, 1)^K$, such that $\sum_{j=1}^K \lambda_j = 1$, and a symmetric probability matrix $\bm B := ((b_{ij}))_{1\leq i, j \leq K}$,  where $b_{ij} \in [0, 1]$, for all $1\leq i, j\leq K$ and $b_{ij} > 0$ for some $ 1\leq i, j \leq K$. The (sparse)  stochastic block model with proportion vector $\bm \lambda$ and probability matrix $\bm B$ is the inhomogeneous random graph $\mathcal G(N, \bm P^{(N)})$, with  
			$\bm P^{(N)} = ((p_{uv}))_{1 \leq u, v \leq N}$ where  
			\begin{align}\label{eq:psbm}
				p_{uv} = \frac{b_{ij}}{N} \quad \textrm{ for } \quad (u, v) \in B_{i} \times B_{j}, 
			\end{align} 
			where $B_j := (N \sum_{i=1}^{j-1}\lambda_i, N \sum_{i=1}^j \lambda_i] \bigcap [N]$, for $j \in \{1,\ldots,K\}$. In other words, the set of vertices is divided into $K$ blocks (communities) $B_1, B_2, \ldots, B_K$, such that the edge between vertices $u \in B_i$ and $v \in B_j$ occurs independently with probability $b_{ij}/N$. 
			Clearly, in this case \eqref{eq:P1} holds. Next, to check that \eqref{eq:P2} holds, choose $1\leq i, j \leq K$ such that $b_{ij} > 0$. Then for all $u \in B_i$,  $$\limsup_{N \rightarrow \infty} \prod_{v=1}^N \left(1- p_{u v} \right) \le \limsup_{N \rightarrow \infty} \prod_{v \in B_j} \left(1-\frac{b_{ij}}{N}\right) = \exp(- \lambda_j b_{ij}) < 1,$$ 
			which verifies \eqref{eq:P2}, since $|B_j| = \Omega (N)$. Hence, by Corollary \ref{inh}, the PMPL estimate \eqref{eq:betatheta} is $\te(1/\sqrt{N})$-consistent in this example. 
			As a consequence, the PMPL estimate is $\te(1/\sqrt{N})$-consistent for sparse 
			Erd\H{o}s-R\'enyi random graphs $G(N, c/N)$, which corresponds to setting $b_{ij} = c $, for all $1 \leq i, j \leq K$ in \eqref{eq:psbm}. } 
		\label{example:Pgraph}
	\end{example}

	\subsection{Beyond Bounded Degree Graphs} 
	\label{sec:GN}
	
We can also establish the consistency of the PMPL estimate beyond bounded degree graphs using Proposition \ref{ppn:estimateF}, which provides error rates in terms of $\| \bm A \|_F$. To this end, note that when $\bm A = \bm A_{G_N}$ is the scaled adjacency matrix of $G_N$ as in \eqref{eq:graph}, then 
 $$\| \bm A \|_F = \Theta\left(\frac{N}{\sqrt {|E(G_N)|}}\right).$$
	Hence, whenever \eqref{eq:degree1} holds, Proposition \ref{ppn:estimateF} implies, 
		\begin{align*}%\label{eq:ratelogdN}
			\|\hat{\bg} - \bg \|_2 = O_s\left(\sqrt{\frac{ |E(G_N)|^2 \log d }{N^3}}\right)  \quad 		\end{align*} 
		with probability $1-o(1)$, whenever $d = o(N^2/|E(G_N)|)$. This shows that the PMPL estimate is consistent whenever $|E(G_N)| = o(N^{3/2})$ (up to log-factors) and $d = o(N^2/|E(G_N)|)$.  In particular, if $G_N$ is $\Delta$-regular (that is, all vertices have of $G_N$ has degree $\Delta$), then the rate of convergence becomes $O_s( \Delta  \sqrt{ \log d/N})$, if $d = o(N/\Delta)$.

	\section{Computation and Experiments}  
	\label{sec:computationandexperiments}

	Next, we discuss an algorithm for computing the PMPL estimates (Section \ref{sec:computation}) and evaluate its performance in numerical experiments using synthetic data (Section \ref{sec:experiments}).

	\subsection{Computation of the PMPL Estimates} 
	\label{sec:computation}

	A classical method developed for solving sparse estimation problems is the proximal descent algorithm \citep{friedman2010regularization}. We employ this algorithm to the optimization problem \eqref{eq:pltheta}. To describe the algorithm, let
	\begin{align} \label{eq:LNh}
		f(\bm z) : = L_N(\bm z) + \lambda ||\bm z||_1 ,  
	\end{align}  
	for $\bm z \in \R^{d+1}$, with $L_N( \cdot )$ as defined in \eqref{eq:LNbetatheta}. Also, for $t \in \R$ and $\bm x \in \R^{d+1}$ define 
	\begin{align*}
		G_t(\bm x)=\frac{1}{t}\left(\bm x-\mathrm{prox}_{t}(\bm x-t \nabla L_N(\bm x))\right) , 
	\end{align*}
	where 
	\begin{align*} 
		\mathrm{prox}_{t}(\bm x) :=\arg\min_{\bm z \in \R^{d+1}}\left\{\frac{1}{2t}\left\|\bm x - \bm z \right\|_2^{2} + \lambda ||\bm z||_1 \right\} =   \left( \frac{x_i}{t} \left( 1 - \frac{\lambda}{\left | \frac{x_i}{t} \right| } \right)_+ \right)_{1 \leq i \leq d+1}, 
	\end{align*}
	is minimized by the soft thresholding estimator. To chose the step size in the proximal descent algorithm, we employ a backtracking line search, which is commonly used in gradient-based as well as in lasso-type problems  \citep{qin2013efficient}. To justify this we invoke the following result applied to the function $f$ defined in \eqref{eq:LNh}: 
	
	\begin{proposition}\citep{vandenberghe}~%[Global inequality]  
		Fix $s \geq 1$ and a step size $t > 0$. Suppose at the $s$-th iteration the following bound holds: 
		\begin{align}\label{eq:line_search}
			L_{N}(\bm \gamma^{(s)}-tG_t(\bm \gamma^{(s)}))\leq L_{N}(\bm \gamma^{(s)})-t\nabla L_{N}(\bm \gamma^{(s)})^{\top}G_t(\bm \gamma^{(s)})+\frac{t}{2}\|G_{t}(\bm \gamma^{(s)})\|_2^{2} . 
		\end{align} 
		Then for all $\bm \gamma \in \R^{d+1}$, 
		\begin{align}\label{eq:fgamma}
			f(\bm \gamma^{(s)}-tG_{t}(\bm \gamma^{(s)}))\leq f( \bm \gamma)+G_t(\bm \gamma^{(s)})^{\top} (\bm \gamma^{(s)}-\bm \gamma)-\frac{t}{2}\|G_{t}(\bm \gamma^{(s)})\|_2^2 . 
		\end{align}
		\label{ppn:LNgamma}
	\end{proposition}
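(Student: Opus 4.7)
\emph{Proof strategy.} This is the standard descent lemma for proximal gradient methods with backtracking line search, applied to the decomposition $f = L_N + \lambda\|\cdot\|_1$, where the smooth part $L_N$ is convex (since each summand $\log\cosh(\bm \theta^\top \bm Z_i + \beta m_i(\bm X))$ is the composition of the convex function $\log\cosh$ with an affine map in $(\beta,\bm \theta)$, and all remaining terms in \eqref{eq:LNbetatheta} are affine). The plan is to combine three ingredients: the assumed sufficient-decrease inequality \eqref{eq:line_search}, the first-order convexity bound for $L_N$ at $\bm \gamma^{(s)}$, and a subgradient inequality for $\lambda\|\cdot\|_1$ read off the optimality condition of the proximal operator.

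First I would rewrite the trial point as a prox step. By the definition of $G_t$,
\[
\bm \gamma^{(s)} - t\, G_t(\bm \gamma^{(s)}) \;=\; \mathrm{prox}_{t}\!\bigl(\bm \gamma^{(s)} - t\,\nabla L_N(\bm \gamma^{(s)})\bigr).
\]
The first-order optimality condition of the strictly convex minimization defining $\mathrm{prox}_t$ yields the subgradient inclusion
\[
G_t(\bm \gamma^{(s)}) - \nabla L_N(\bm \gamma^{(s)}) \;\in\; \lambda\,\partial\bigl\|\bm \gamma^{(s)} - t\,G_t(\bm \gamma^{(s)})\bigr\|_1.
\]
The convexity of $\|\cdot\|_1$ then gives, for every $\bm \gamma \in \R^{d+1}$,
\[
\lambda\bigl\|\bm \gamma^{(s)} - t\,G_t(\bm \gamma^{(s)})\bigr\|_1 \;\leq\; \lambda\|\bm \gamma\|_1 + \bigl\langle G_t(\bm \gamma^{(s)}) - \nabla L_N(\bm \gamma^{(s)}),\; \bm \gamma^{(s)} - t\,G_t(\bm \gamma^{(s)}) - \bm \gamma\bigr\rangle.
\]

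Second, I would combine \eqref{eq:line_search} with the convexity inequality $L_N(\bm \gamma^{(s)}) \leq L_N(\bm \gamma) + \nabla L_N(\bm \gamma^{(s)})^\top(\bm \gamma^{(s)} - \bm \gamma)$ to obtain
\[
L_N\bigl(\bm \gamma^{(s)} - t\,G_t(\bm \gamma^{(s)})\bigr) \;\leq\; L_N(\bm \gamma) + \nabla L_N(\bm \gamma^{(s)})^\top (\bm \gamma^{(s)} - \bm \gamma) - t\,\nabla L_N(\bm \gamma^{(s)})^\top G_t(\bm \gamma^{(s)}) + \tfrac{t}{2}\|G_t(\bm \gamma^{(s)})\|_2^{2}.
\]

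Finally, adding the two displays and expanding the inner product in the first, the cross terms containing $\nabla L_N(\bm \gamma^{(s)})$ cancel, and the two contributions involving $\|G_t(\bm \gamma^{(s)})\|_2^{2}$ combine to $-\tfrac{t}{2}\|G_t(\bm \gamma^{(s)})\|_2^{2}$; recognizing $L_N + \lambda\|\cdot\|_1 = f$ on both sides then produces exactly \eqref{eq:fgamma}. The only delicate point is the bookkeeping of the cross terms, but this is mechanical once the subgradient inclusion above is in hand; no deeper analytic obstacle is anticipated, since convexity of $L_N$ and the subdifferential calculus for $\|\cdot\|_1$ are standard.
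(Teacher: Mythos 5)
Your proposal is correct. The paper does not prove this proposition itself --- it defers to the cited lecture notes \cite{vandenberghe} --- and your argument (prox optimality condition giving the subgradient inclusion $G_t(\bm\gamma^{(s)})-\nabla L_N(\bm\gamma^{(s)})\in\lambda\,\partial\|\bm\gamma^{(s)}-tG_t(\bm\gamma^{(s)})\|_1$, convexity of $L_N$, and the line-search bound, added together so the $\nabla L_N$ cross terms cancel) is exactly the standard proof found there; I checked the bookkeeping and the terms combine to $G_t(\bm\gamma^{(s)})^\top(\bm\gamma^{(s)}-\bm\gamma)-\tfrac{t}{2}\|G_t(\bm\gamma^{(s)})\|_2^2$ as claimed.
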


	Note that setting  $\bm \gamma=\bm \gamma^{(s)}$ in \eqref{eq:fgamma} gives, 
	\begin{align*}
		f(\bm \gamma^{(s)}-tG_{t}(\bm \gamma^{(s)}))\leq f(\bm \gamma^{(s)})-\frac{t}{2}\|G_{t}(\bm \gamma^{(s)})\|_2^2.
	\end{align*} 
	This shows that whenever the line-search condition \eqref{eq:line_search} holds, the descent of the objective function is guaranteed by setting 
	$$\bm \gamma^{(s+1)}  \gets \bm \gamma^{(s)}-tG_{t}(\bm \gamma^{(s)}) = \mathrm{prox}_{ t }(\bm \gamma^{(s)}- t \nabla L_{N}(\bm \gamma^{(s)})) .$$ 
	%In particular, we consider $h(z)=\|z\|_1$ so that we summarize the algorithm as follows. 
	Therefore, the proximal gradient descent algorithm for optimization problem \eqref{eq:pltheta}, with step size chosen by backtracking line search, proceeds in the following two steps:  We initialize with $\bm \gamma^{(0)} = \bm 0 \in \R^{d+1}$ and $t^{(0)} =1$. 
	\begin{itemize} 
		
		\item  If, at the $s$-th iteration $(\bm \gamma^{ (s) }, t^{(s)})$ satisfies the line-search condition \eqref{eq:line_search}, then we update  the estimates 
		\begin{align}\label{eq:gammas1}
			\bm \gamma^{(s+1)} & \gets\mathrm{prox}_{ t^{(s)} }(\bm \gamma^{(s)}-t^{(s)}\nabla L_{N}(\bm \gamma^{(s)})) \nonumber \\ 
			& = \left(\frac{\bm \gamma^{(s)}}{t^{(s)}} - \nabla L_{N}(\bm \gamma^{(s)}) \right)\left(\bm{1}-\frac{\lambda}{\left|  \frac{\bm \gamma^{(s)}}{t^{(s)}} - \nabla L_{N}( \bm \gamma^{(s)}) \right|} \right)_{+} ,   
		\end{align}
		and keep the step size unchanged, that is, $t^{(s+1)}\gets t^{(s)}$. 
		
		\item If at the $s$-th iteration $(\bm \gamma^{ (s) }, t^{(s)})$ does not satisfy the line-search condition \eqref{eq:line_search} we shrink the step size by a factor of $\tau \in (0, 1)$, that is, $t^{(s+1)}\gets \tau t^{(s)}$, and keep the estimates unchanged, that is, $\bm \gamma^{ (s+1) } \gets \bm \gamma^{ (s) }$. 
		
	\end{itemize} 
	%The above steps are repeated until $\|\bm \gamma^{(s+1)}-\bm \gamma^{(s)}\|_1 \leq \varepsilon$. 
	The procedure is summarized in Algorithm \ref{alg:algorithmmple}. 
	
	\begin{algorithm}
		\caption{}\label{alg:algorithmmple}
		\begin{algorithmic}
			\State Fix a value of $\tau \in (0, 1)$ and $\delta > 0$. Initialize with $\bm \gamma^{(0)} = \bm 0 \in \R^{d+1}$ and $t^{(0)} =1$. 
			\While{ $\|\bm \gamma^{(s+1)}-\bm \gamma^{(s)}\|_1> \delta$}
			
			{    
				\If{$L_{N}(\bm  \gamma^{(s)}-t^{(s)} G_{t^{(s)}}(\bm  \gamma^{(s)}))\leq L_{N}(\bm  \gamma^{(s)})-t^{(s)}\nabla L_{N}(\bm  \gamma^{(s)})^{\top}G_{t^{(s)}} (\bm  \gamma^{(s)})+\frac{t^{(s)}}{2}\|G_{t^{(s)}}(\bm  \gamma^{(s)})\|_2^{2}$}
				\State $$\bm \gamma^{(s+1)}\gets\mathrm{prox}_{ t^{(s)} }(\bm \gamma^{(s)}-t^{(s)}\nabla L_{N}(\bm \gamma^{(s)})) \text{ and } t^{(s+1)}\gets t^{(s)} . $$ 
				
				\EndIf
			}
			
			{
				\If{$L_{N}(\bm \gamma^{(s)}-t^{(s)}G_{t^{(s)}}(\bm \gamma^{(s)}))> L_{N}(\bm \gamma^{(s)})-t^{(s)}\nabla L_{N}(\bm \gamma^{(s)})^{\top}G_{t^{(s)}}(\bm \gamma^{(s)})+\frac{t^{(s)}}{2}\|G_{t^{(s)}}(\bm \gamma^{(s)})\|_2^{2}$}
				\State $$ \bm \gamma^{(s+1)}\gets \bm \gamma^{(s)} \text{ and } t^{(s+1)}\gets\tau t^{(s)} . $$ 
				\EndIf
			}
			\EndWhile
		\end{algorithmic}
	\end{algorithm}

	Note that the smooth part of the objective function $L_N$ is differentiable and its gradient $\nabla L_N$ is Lipschitz (by Lemma \ref{lm:LNgamma12}). Hence, Algorithm \ref{alg:algorithmmple} reaches $\varepsilon$-close to the optimum value in $O(1/\varepsilon)$ iterations \citep{vandenberghe}.

	\subsection{Numerical Experiments}
	\label{sec:experiments}

	We evaluate the performance of the PMPL estimator using Algorithm \ref{alg:algorithmmple} on synthetic data. The first step is to develop an algorithm to sample from model \eqref{eq:logisticmodel}. As mentioned before, direct sampling from the model \eqref{eq:logisticmodel} is computationally challenging due to the presence of an intractable normalizing constant. To circumvent this issue, we deploy a Gibbs sampling algorithm which iteratively updates each outcome variable $X_i$, for $1 \leq i \leq N$, based on the conditional distribution $\mathbb{P}(X_i| (X_j)_{j \ne i}, \Z)$ (recall \eqref{eq:probability_conditional}). Formally, the sampling algorithm can be described as follows: 
	\begin{itemize}
		\item Start with an initial configuration $\bm X^{(0)} := (X_i^{(0)})_{1 \leq i \leq N} \in \{ -1, +1\}^N$.
		\item At the $(s+1)$-th step, for $s \geq 1$, choose a vertex of $G_N$ uniformly at random. If the vertex $1 \leq i \leq N$ is selected, then update $X_i^{(s)}$ to
		\[
		X_i^{(s+1)} = 
		\begin{cases}
			+1,      & \text{with probability  } \mathbb P(X_i^{(s)} = 1 | (X_j^{(s)})_{ j \neq i }, \bm Z)\\
			-1,      & \text{with probability  } \mathbb P(X_i^{(s)} = -1 | (X_j^{(s)})_{ j \neq i }, \bm Z )
		\end{cases},
		\]
		and keep $X_j^{(s + 1)} = X_j^{(s)}$, for $j \neq i$. Define $\bm X^{(s+1)} := ( X_i^{(s+1)} )_{1 \leq i \leq N}$. 
	\end{itemize} 
	The Markov chain $\{\bm X^{(s+1) }\}_{s \geq 0}$ has stationary distribution \eqref{eq:logisticmodel} and, hence, can be used to generate approximate samples from \eqref{eq:logisticmodel}.  
	
	For the numerical experiments, we consider $\beta=0.3$ and choose the first $s$ regression coefficients $\theta_1, \theta_2, \ldots, \theta_s$ independently from $\mathrm{Uniform}([-1, -\frac{1}{2}] \cup [\frac{1}{2}, 1])$, while the remaining $d-s$ regression coefficients $\theta_{s+1}, \theta_{s+2}, \ldots, \theta_d$ are set to zero. The covariates $\bm Z_1, \bm Z_2, \ldots, \bm Z_N$ are sampled i.i.d. from a $d$-dimensional multivariate Gaussian distribution with mean vector $\bm 0$ and covariance matrix $\Sigma = ((\sigma_{ij}))_{1 \leq i, j \leq d}$, with $\sigma_{ij} = 0.2^{|i-j|}$. With the aforementioned choices of the parameters and the covariates, we generate a sample from the model \eqref{eq:logisticmodel} by running the Gibbs sampling algorithm described above for 30000 iterations.  We then apply Algorithm \ref{alg:algorithmmple} by setting $\varepsilon=0.001$, $\tau=0.8$, and consider the solution paths of the PMPL estimator as a function of $\log (\lambda)$, when the network $G_N$ is selected to be the Erd\H os-R\'enyi (ER) model and the stochastic block model (SBM). We set the range of $\lambda$ to be a geometric sequence of length $100$ from $0.001$ to $0.1$.

	\begin{itemize} 
		
		\item Figure \ref{fig:solutionpatherdosrenyi} depicts the solution paths of the PMPL estimate when $G_N$ is a realization of the Erd\H os-R\'enyi random graph $G(N, 5/N)$. Figure \ref{fig:solutionpatherdosrenyi} (a), corresponds to a setting $N=1200$, $d=200$, $s=5$, while Figure \ref{fig:solutionpatherdosrenyi} (b) to $N=1200$, $d=600$, $s=600$.
		
		\begin{figure}[ht]
			\begin{minipage}[b]{0.48\linewidth}
				\centering
				\includegraphics[width=\textwidth]{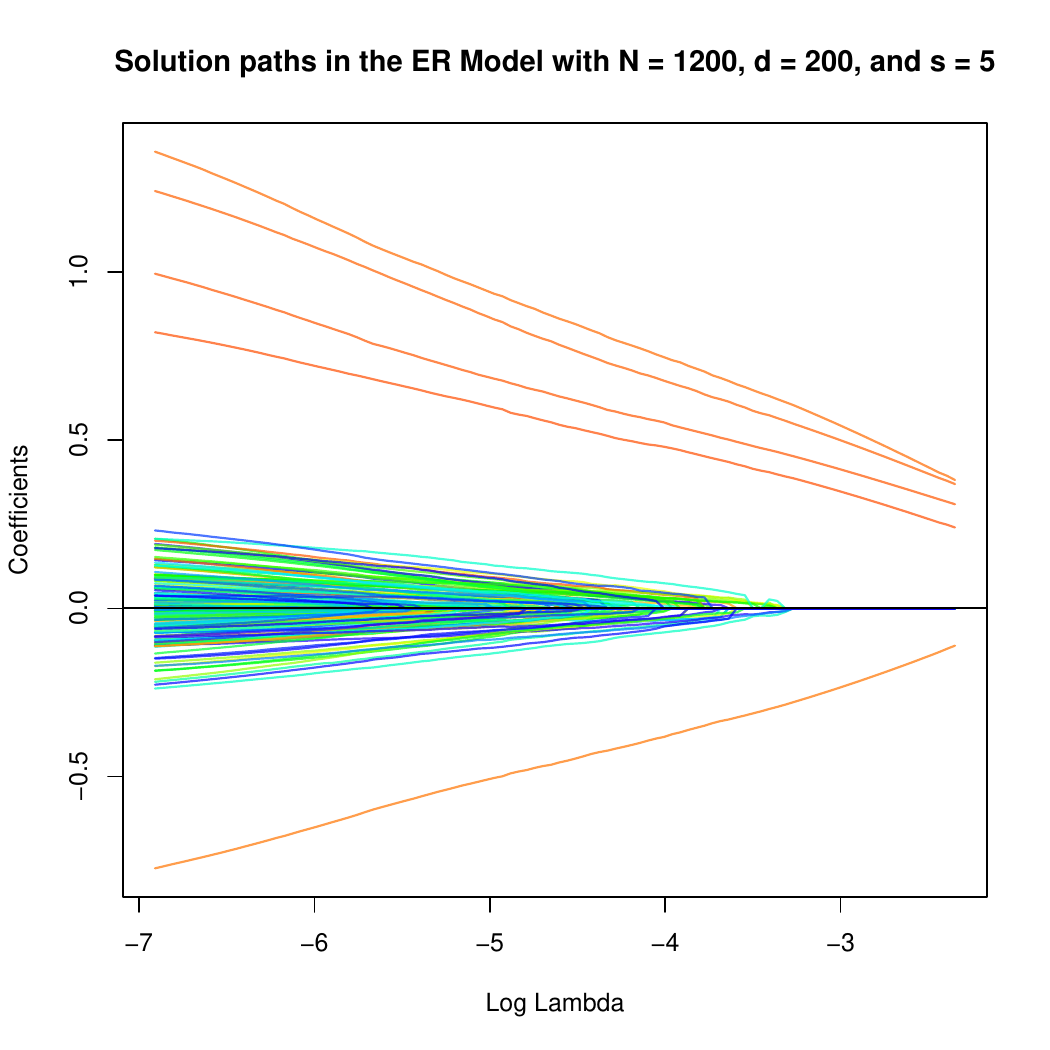}
				\caption*{(a)}
				%\label{fig:solutionpath_d200}
			\end{minipage}
			\begin{minipage}[b]{0.48\linewidth}
				\centering
				\includegraphics[width=\textwidth]{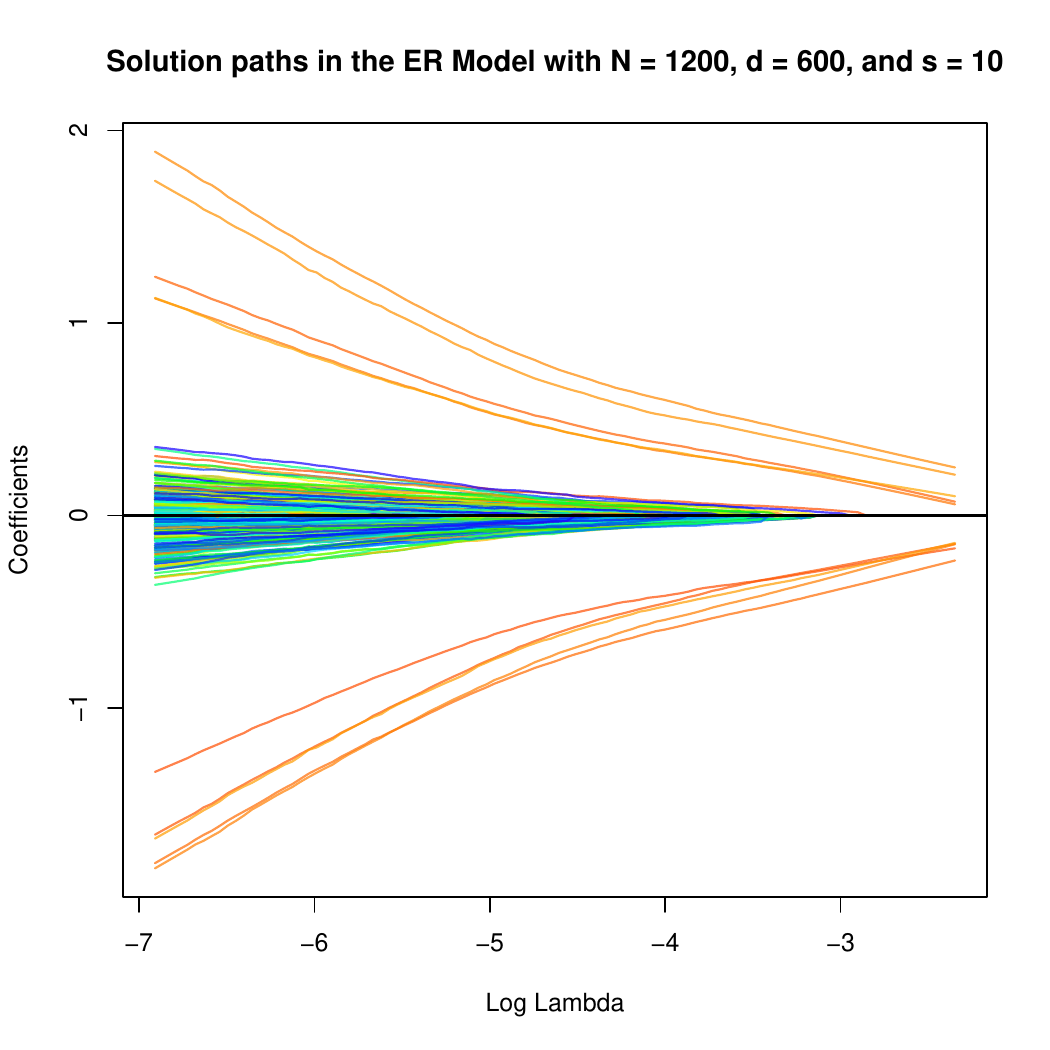}
				\caption*{(b)}
			\end{minipage}
			\caption{\small{Solution paths of the PMPL estimates in the Erd\H os-R\'enyi model $G(N, 5/N)$: (a)  $N=1200$, $d=200$, $s=5$, and (b)  $N=1200$, $d=600$, $s=10$. }}
			\label{fig:solutionpatherdosrenyi} 
		\end{figure} 
		
		\item Figure \ref{fig:solutionpathsbm} shows the solution paths of the PMPL estimate when $G_N$ is a realization of a SBM with $K=2$, $\lambda_1 = \lambda_2 = \frac{1}{2}$, $p_{11}=p_{22}=4/N$, and $p_{12}=8/N$ (that is, a SBM with 2 equal size blocks with within block connection probability $4/N$ and between block connection probability $8/N$ (recall Example \ref{example:Pgraph})). In 
		Figure \ref{fig:solutionpathsbm} (a) we have $N=1200$, $d=200$, $s=5$, and in  Figure \ref{fig:solutionpathsbm} (b) $N=1200$, $d=600$, $s=10$. 
	\end{itemize} 
	From the plots in Figures \ref{fig:solutionpatherdosrenyi} and \ref{fig:solutionpathsbm}, it is evident that the first $5$ signal (non-zero) coefficients remain non-zero throughout the range of tuning parameters $\lambda$ considered.  Moreover, as expected, $\lambda$ needs to be larger when $d=600$ for the non-signal (zero) coefficients to shrink to zero exactly.

	\begin{figure}[ht]
		\begin{minipage}[b]{0.48\linewidth}
			\centering
			\includegraphics[width=\textwidth]{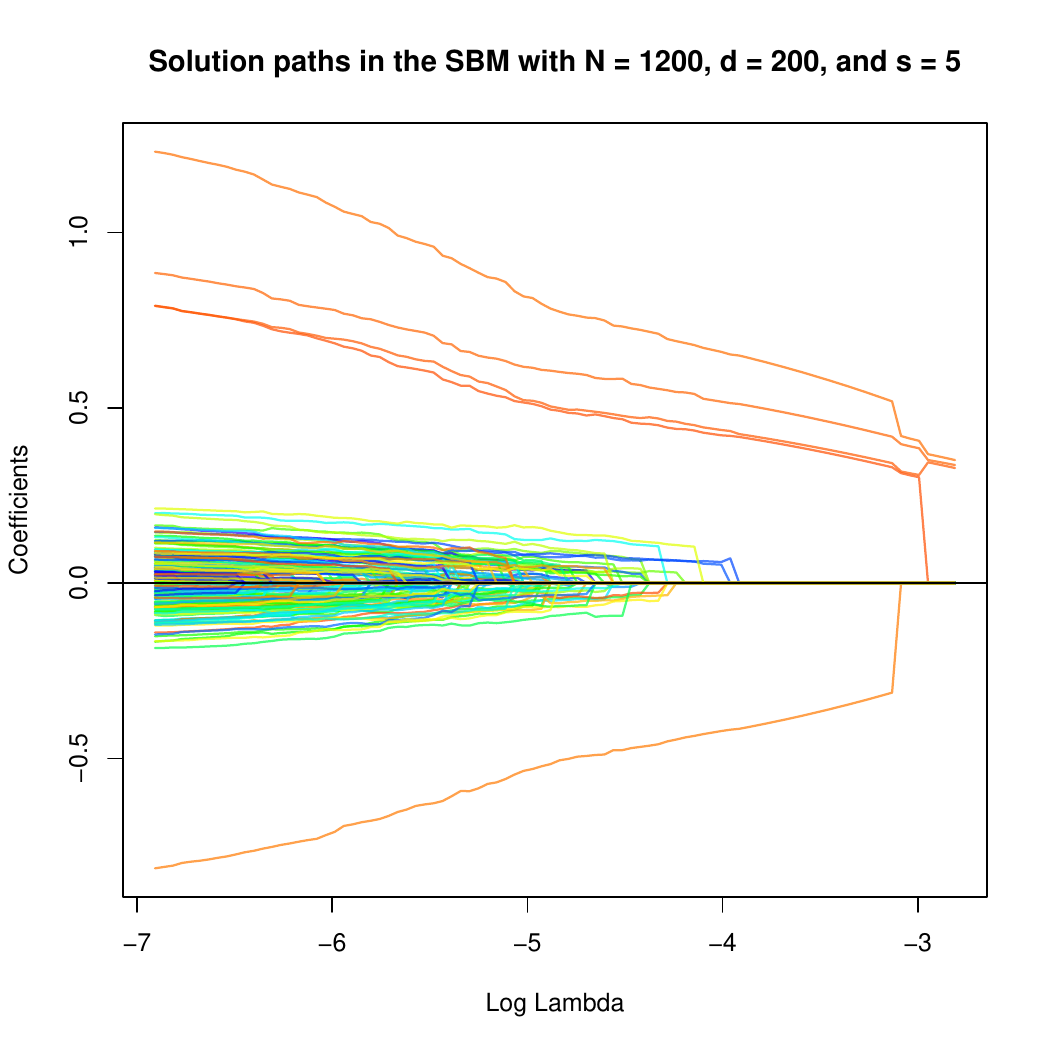}
			\caption*{(a)}
			%\label{fig:solutionpath_d200}
		\end{minipage}
		\begin{minipage}[b]{0.48\linewidth}
			\centering
			\includegraphics[width=\textwidth]{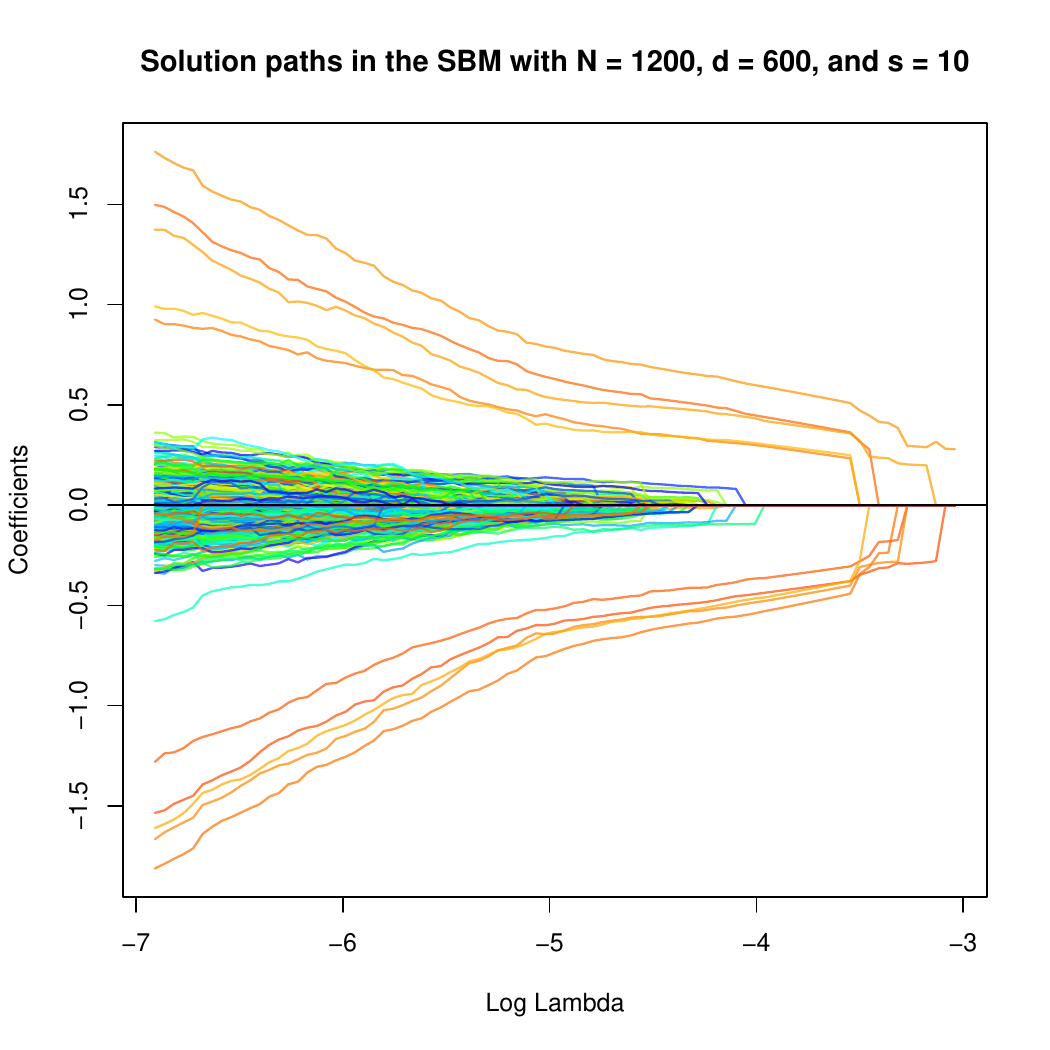}
			\caption*{(b)}
		\end{minipage}
		\caption{\small{Solution paths of the PMPL estimates in the stochastic block model: (a)  $N=1200$, $d=200$, $s=5$, and (b)  $N=1200$, $d=600$, $s=10$. }}
		\label{fig:solutionpathsbm} 
	\end{figure}

	Next, we investigate the estimation errors  by varying the size $N$ of the network $G_N$. To select the regularization parameter $\lambda$, we use a Bayesian Information Criterion (BIC). Specifically, we define 
	\begin{align*}
		\mathrm{BIC}(\lambda) = L_N(\hat \beta_\lambda, \hat{\bm \theta}_\lambda )   + \mathrm{df}(\lambda) \log N , 
	\end{align*} 
	where $\hat \beta_\lambda$, $\hat{\bm \theta}_\lambda=(\hat \theta_{\lambda, i})_{1 \leq i \leq d}$ are the PMPL estimates obtained from \eqref{eq:betatheta} for a fixed value of $\lambda$ and $\mathrm{df}(\lambda) = |\{1 \leq i \leq d: \hat \theta_{\lambda, i} \ne 0 \}|$. We choose $\hat \lambda$ by minimizing $\mathrm{BIC}(\lambda)$ over a grid of values of $\lambda$ and denote the corresponding PMPL estimates by $\tilde{\bm \gamma} = (\hat{\beta}_{\hat \lambda}, \hat{\bm \theta}_{\hat \lambda}^\top)$.  Figure \ref{fig:L12} shows the average $\ell_1$ and $\ell_2$ estimation errors  $\|\tilde{\bm \gamma} - \bm \gamma \|_1$ and $\|\tilde{\bm \gamma} - \bm \gamma \|_2$ and their 1-standard deviation error bars (over 200 repititions) for the Erd\H os-R\'enyi (ER) model and the SBM. We refer to these by \texttt{IsingL1} and \texttt{IsingL2} in Figure \ref{fig:L12}, respectively.  For comparison purposes, we also show the $\ell_1$ and $\ell_2$ estimation errors for the classical penalized logistic regression (with no interaction term, that is, $\beta=0$), denoted by \texttt{LogisticL1} and \texttt{LogisticL2} in Figure \ref{fig:L12}, respectively. The parameters in the numerical experiment are set as follows: $\beta=0.3$, $d=50$, the first $s=5$ regression coefficients $\theta_1, \theta_2, \ldots, \theta_5$ are independent samples from $\mathrm{Uniform}([-1, -\frac{1}{2}] \cup [\frac{1}{2}, 1])$ and the remaining 45 regression coefficients $\theta_6, \theta_7, \ldots, \theta_{50}$ are set to zero. As before, the  covariates $\bm Z_1, \bm Z_2, \ldots, \bm Z_N$ are sampled i.i.d. from a 50-dimensional multivariate Gaussian distribution with mean vector $\bm 0$ and covariance matrix $\Sigma = ((\sigma_{ij}))_{1 \leq i, j \leq 100}$, with $\sigma_{ij} = 0.2^{|i-j|}$.
	\begin{itemize} 
		
		\item Figure \ref{fig:L12} (a) shows the estimation errors when $G_N$ is a realization of the Erd\H os-R\'enyi random graph $G(N, 1/N)$, as $N$ varies from 200 to 1200 over a grid of 6 values. 
		
		\item Figure \ref{fig:L12} (b) shows the estimation errors when $G_N$ is a realization of a SBM with $K=2$, $\lambda_1 = \lambda_2 = \frac{1}{2}$, $p_{11}=p_{22}=0.5/N$, and $p_{12}=1/N$, with $N$ varying as before. 
   
        \item Figure \ref{fig:L12} (c) shows the estimation errors when $G_N$ is a realization from a $\beta$-model \citep{chatterjee_degree}. The $\beta$-model is an inhomogeneous random graph model where each edge $(i, j)$, for $1 \leq u < v \leq N$, is present independently with probability $$p_{uv} = \frac{e^{\beta_u+ \beta_v}}{1+ e^{\beta_u + \beta_v}},$$
     with $(\beta_1,\ldots,\beta_N) \in \R^n$. In Figure \ref{fig:L12} (c) we chose $\beta_{u}=- c \cdot u \log(\log(u + 1)) $, for $1 \leq u \leq N$, where $c = 200/N$ and $N$ varies from  200 to 1200 over a grid of 6 values. 
     
        \item Figure \ref{fig:L12} (d) shows the estimation errors when $G_N$ is a realization from the linear preferential attachment model with one edge added each time,  with $N$ varying as before. 
The linear preferential attachment  graph evolves sequentially one vertex at a time, where each new vertex connects to an existing vertex with probability proportional to their degrees (see \citet{bollobas2003directed,krapivsky2001organization}). Consequently, the model exhibits the `rich gets richer' phenomenon and the degree sequence follows a power law distribution \citep{barabasi1999emergence}. 
        
	\end{itemize} 
	
	The plots in Figure \ref{fig:L12} show that the estimation errors of PMPL estimates exhibit a decreasing trend as $N$ increases, validating the consistency results established in Section \ref{sec:covmain}. Although the $\ell_2$ errors of the PMPL and penalized logistic regression estimates are similar for small $N$, the PMPL errors are better as $N$ increases. Also, the difference between the $\ell_1$ errors of the PMPL and penalized logistic regression estimates is significant. While the $\ell_1$ errors of PMPL estimates show consistent decreasing trends in all four settings, those for the penalized logistic regression estimates are much higher. Moreover, as expected, the empirical variances of the $\ell_1$ and $\ell_2$ errors for the penalized logistic regression estimate are significantly larger than those for the PMPL estimate. These findings illustrate the effectiveness of the proposed method for modeling dependent network data for range of network models, encompassing different network topologies, such as community structure and degree distribution.  

 	\begin{figure}[H]
 % \begin{minipage}
 %         \makebox[0.5\linewidth]{\includegraphics[width=.45\linewidth]{ER d50 prob1.pdf}}%
 %    \makebox[.5\linewidth]{\includegraphics[width=.45\linewidth]{SBM d50 prob1.pdf}}

 %    \makebox[.5\linewidth]{\small (a)}%
 %    \makebox[.5\linewidth]{\small (b)}%

 %    \medskip

 %    \makebox[.5\linewidth]{\includegraphics[width=.45\linewidth]{Beta d50.pdf}}%
 %    \makebox[.5\linewidth]{\includegraphics[width=.45\linewidth]{PA d50.pdf}}

 %    \makebox[.5\linewidth]{\small (c)}%
 %    \makebox[.5\linewidth]{\small (d)}%
 % \end{minipage}
		\begin{minipage}{0.49\linewidth}
			\centering
			\includegraphics[width=7.25cm]{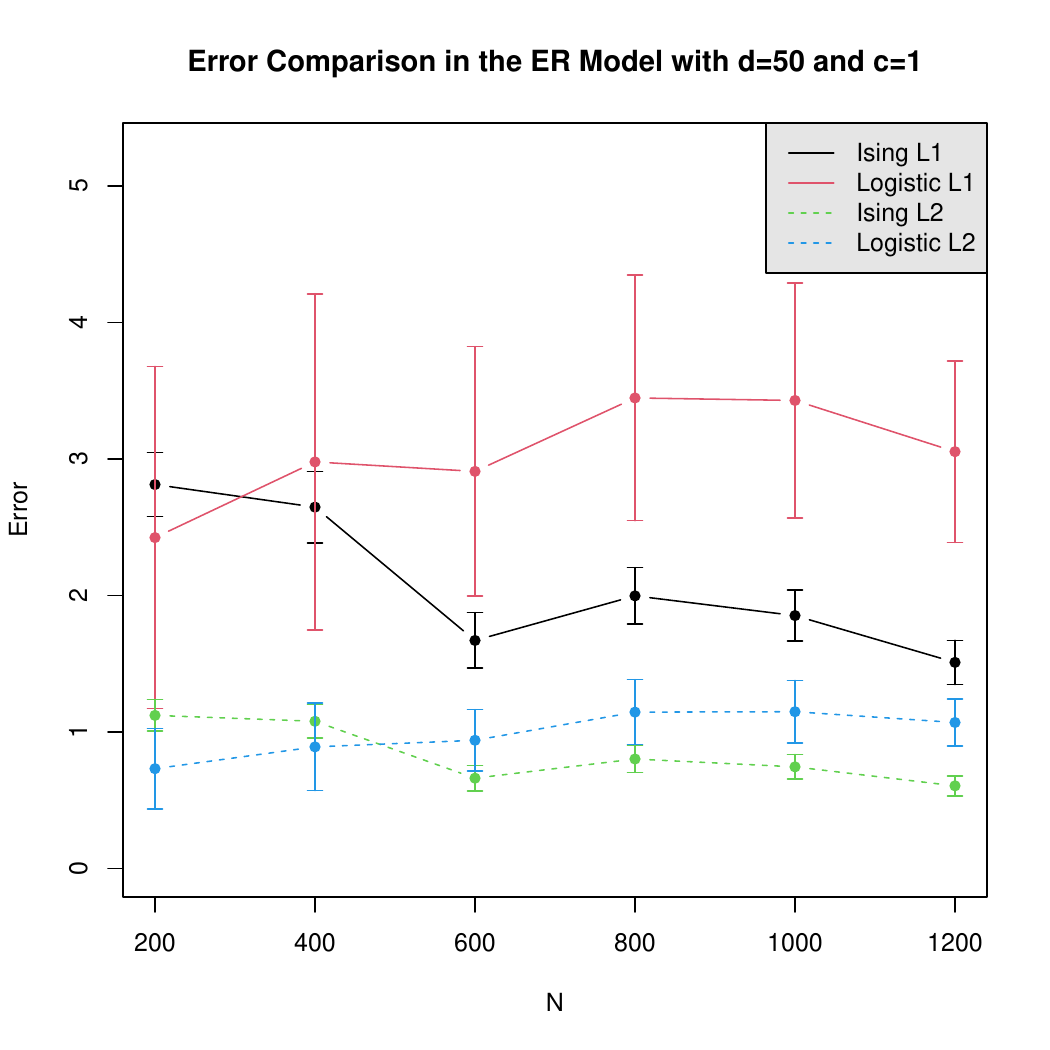}
			\caption*{ (a) }
			%\label{fig:solutionpath_d200}
		\end{minipage}
		\begin{minipage}{0.49\linewidth}
			\centering
			\includegraphics[width=7.25cm]{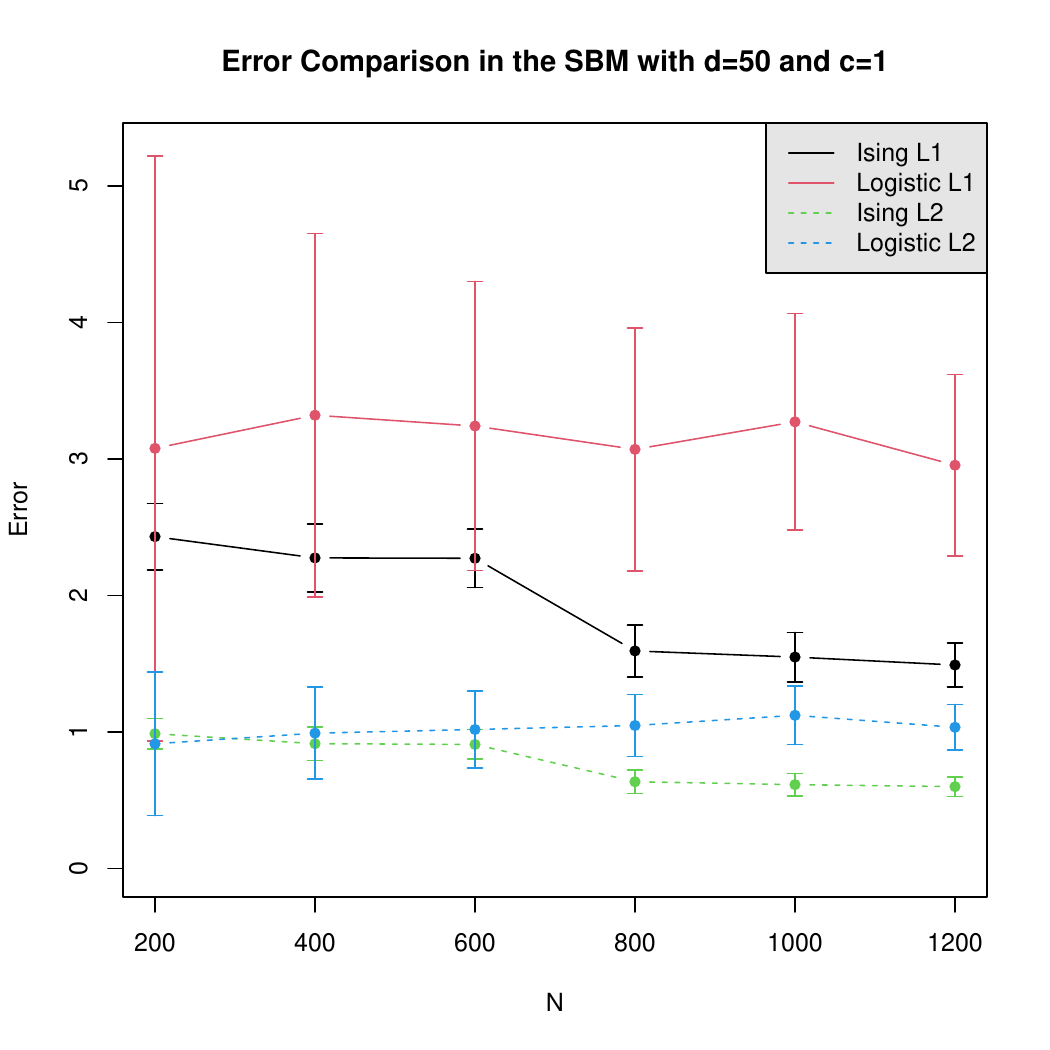}
			\caption*{ (b) }
		\end{minipage}
        		\begin{minipage}{0.49\linewidth}
			\centering
			\includegraphics[width=7.25cm]{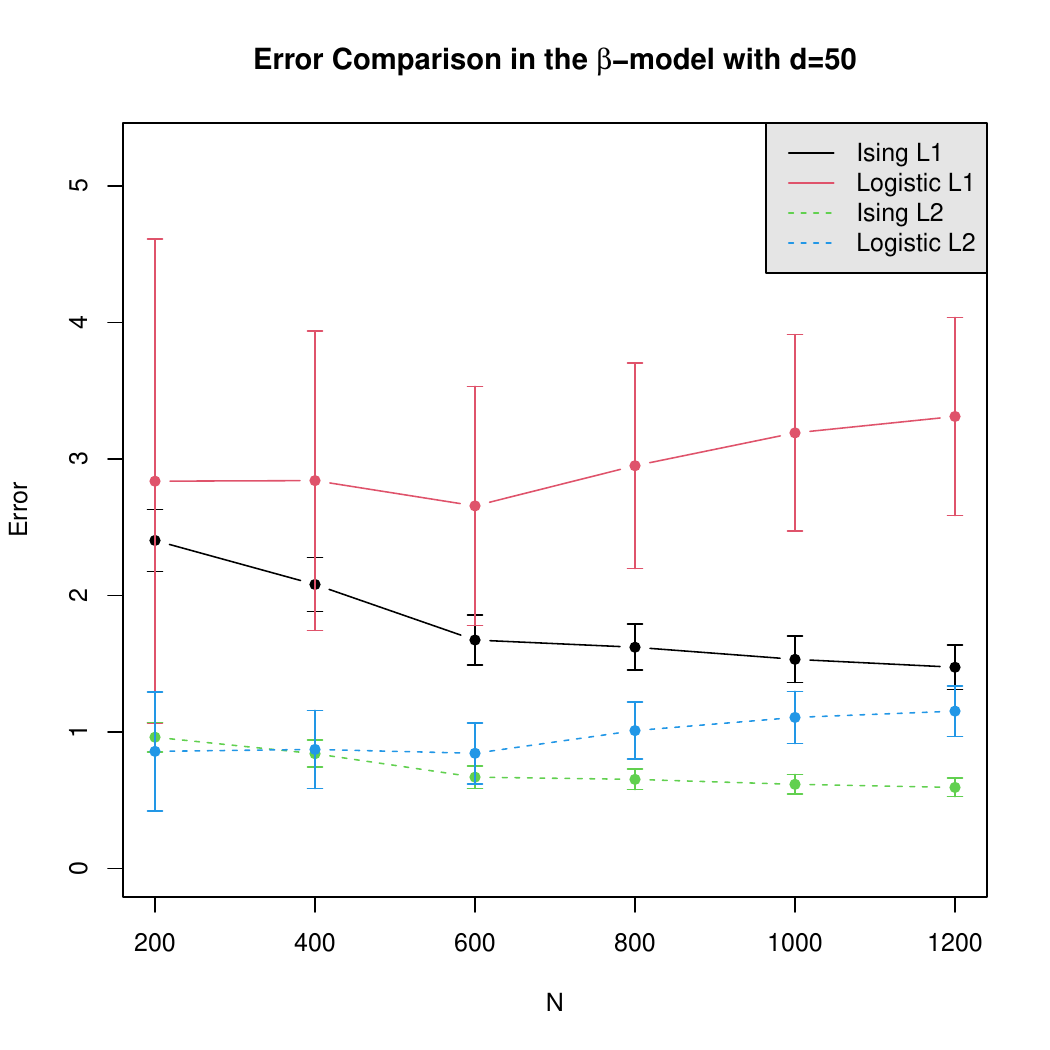}
			\caption*{ (c) }
		\end{minipage}
        \begin{minipage}{0.49\linewidth}
			\centering
			\includegraphics[width=7.25cm]{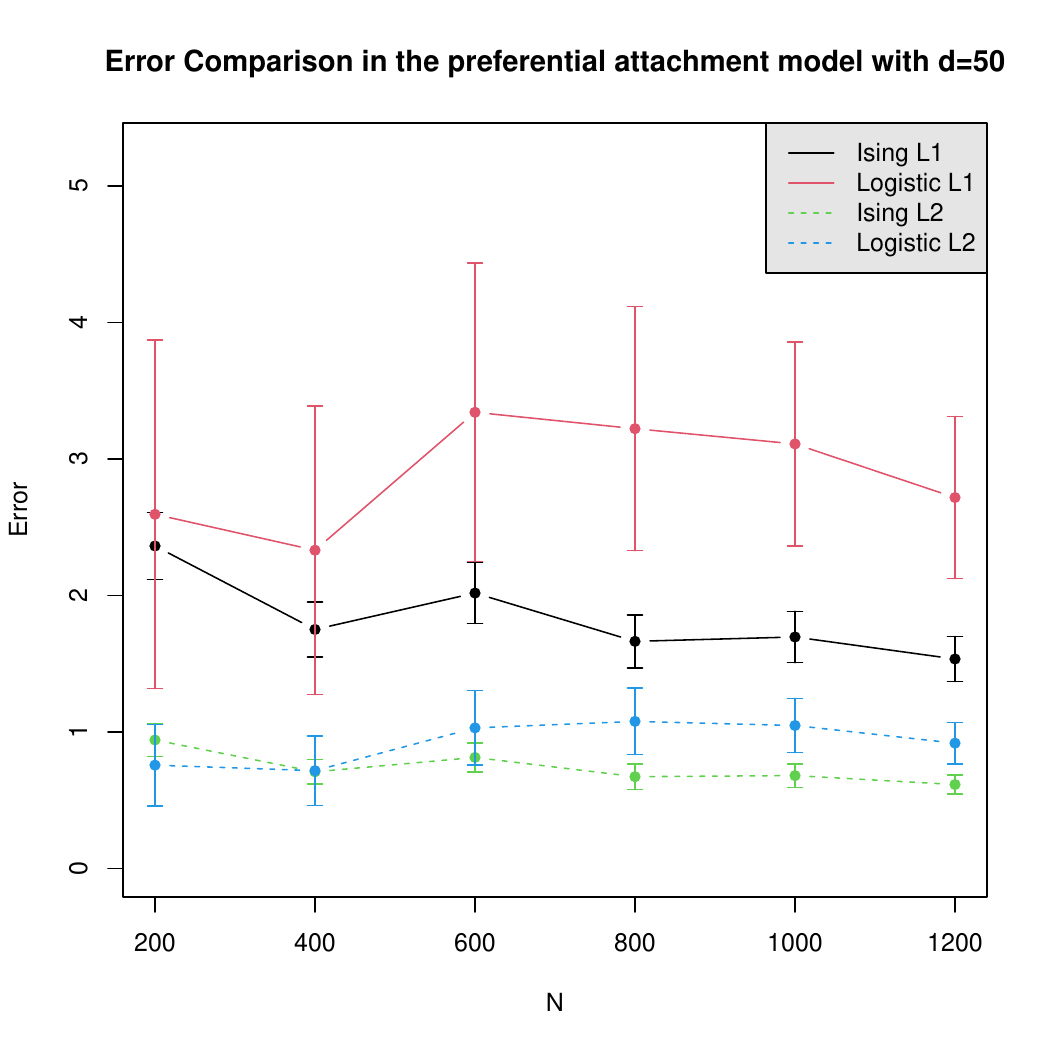}
			\caption*{ (d) }
		\end{minipage}
		\caption{\small{ Estimation errors of the PMPL and the penalized logistic regression estimates in the (a) Erd\H os-R\'enyi model and (b) the stochastic block model, (c) the $\beta$-model, and (d)  the preferential attachment model. }}
		\label{fig:L12}
	\end{figure}
 
We also investigate how the PMPL estimate performs with respect to the density of the network. To this end, we consider the Erd\H os-R\'enyi random graph $G(N, c/N)$, with $N=600$, and vary $c$. Figure \ref{fig:decreasing_varying_c} shows the estimation errors as $c$ increases, with  dependence parameter (a) $\beta=0.15$ and (b) $\beta=0.3$ in the respective sub-plots. As expected, the error curves for the PMPL estimates are generally better than those for the penalized logistic regression estimates. Moreover, the estimation errors are relatively small to begin with (when $c$ is small), but starts to show an increasing trend with $c$ after a while. This is expected because as the network density increases the rate of convergence slows down and, as a result, consistent estimation becomes harder (recall the discussion in Section \ref{sec:GN}). 
%with PMPL estimates exhibit decreasing trends remarking that it is more challenging for estimation when the network is extremely sparse.

\begin{figure}[H]
 % \begin{minipage}
 %         \makebox[0.5\linewidth]{\includegraphics[width=.45\linewidth]{ER d50 prob1.pdf}}%
 %    \makebox[.5\linewidth]{\includegraphics[width=.45\linewidth]{SBM d50 prob1.pdf}}

 %    \makebox[.5\linewidth]{\small (a)}%
 %    \makebox[.5\linewidth]{\small (b)}%

 %    \medskip

 %    \makebox[.5\linewidth]{\includegraphics[width=.45\linewidth]{Beta d50.pdf}}%
 %    \makebox[.5\linewidth]{\includegraphics[width=.45\linewidth]{PA d50.pdf}}

 %    \makebox[.5\linewidth]{\small (c)}%
 %    \makebox[.5\linewidth]{\small (d)}%
 % \end{minipage}
		\begin{minipage}{0.49\linewidth}
			\centering
			\includegraphics[width=7.25cm]{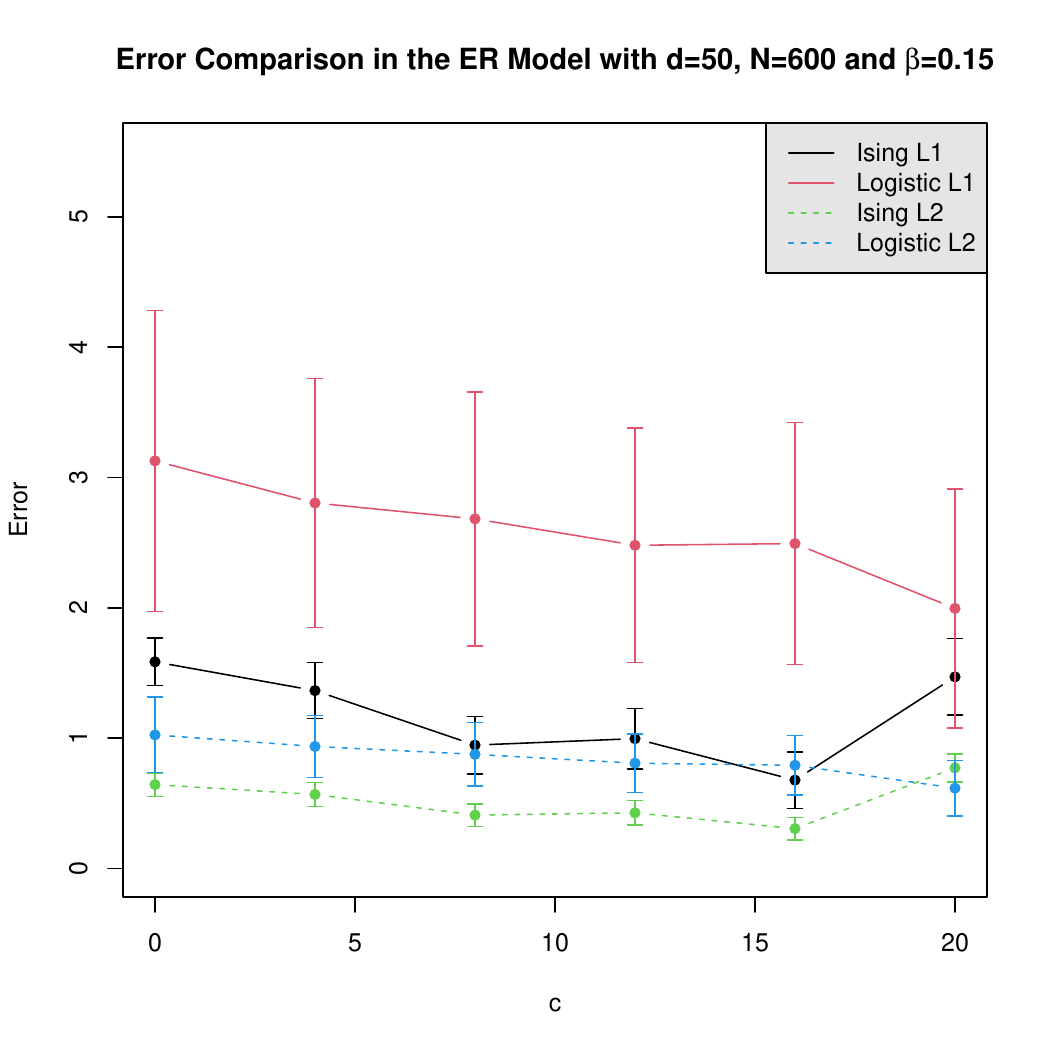}
			\caption*{ (a) }
			%\label{fig:solutionpath_d200}
		\end{minipage}
		\begin{minipage}{0.49\linewidth}
			\centering
			\includegraphics[width=7.25cm]{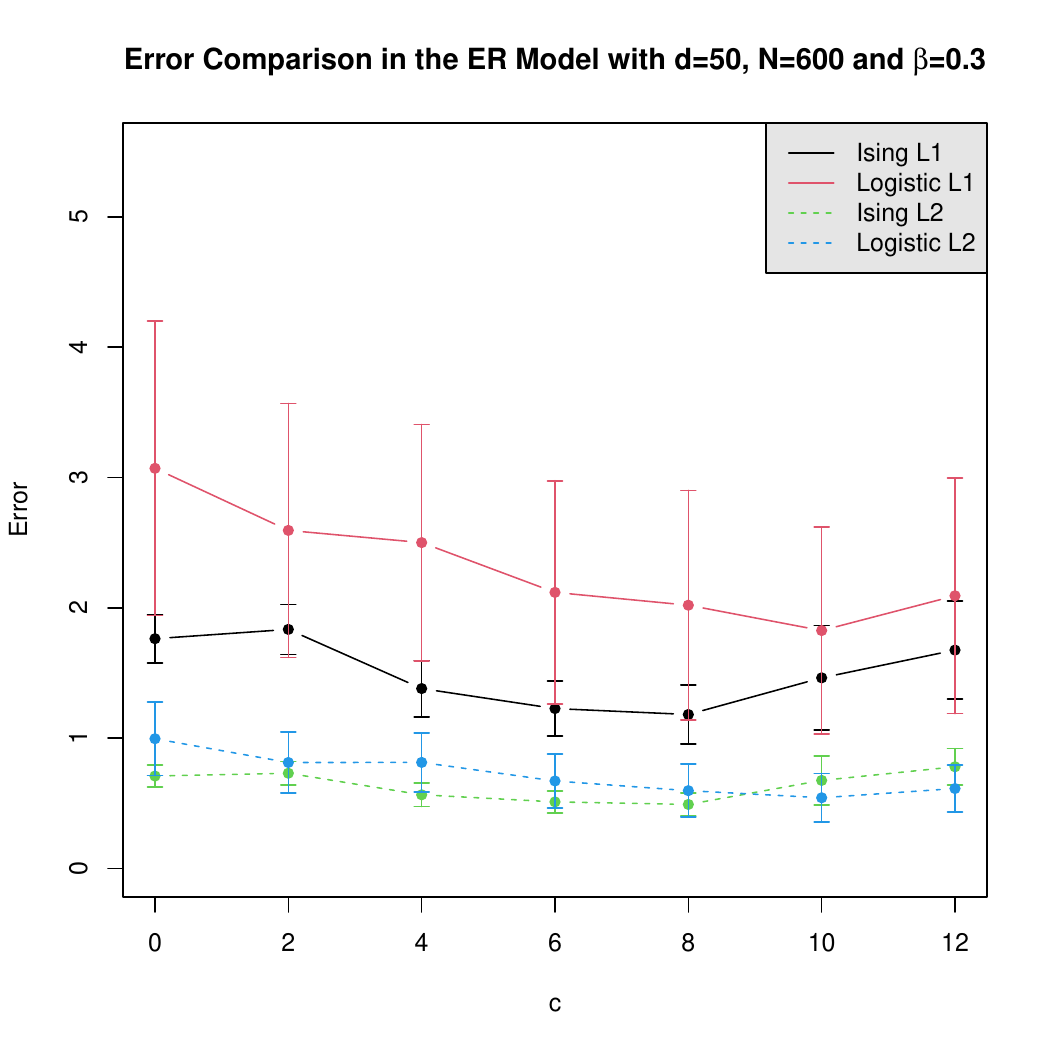}
			\caption*{ (b) }
		\end{minipage}
		\caption{\small{ Estimation errors of the PMPL and the penalized logistic regression estimates in the Erd\H os-R\'enyi model $G(N, c/N)$, with $N=600$, as $c$ varies, where (a) $\beta=0.15$ and (b) $\beta=0.3$. }}
		\label{fig:decreasing_varying_c}
	\end{figure}

\section{Application to Spatial Transcriptomics} 
\label{sec:st} 

%\subsection{Dataset and method pipeline}  

%Processed Visium Spatial Gene Expression data from 10x Genomics. Database: https://support.10xgenomics.com/spatial-gene-expression/datasets

%

%This is a relatively new direction in biology made possible by technologies for massively parallelized measurement of cell transcriptomes/proteomes in situ that, unlike standard single cell sequencing methods, retains information regarding the spatial neighborhood of the cells. Each node has a high dimensional feature set, encoding the measurements made for that cell, be it gene expression or protein expression, depending on the experimental protocol. Here, the goal is to understand how the spatial niche of a cell contribute to its phenotype. 

In this section, we illustrate how the proposed model can be useful in selecting relevant genes in spatial gene expression data. As mentioned in the Introduction, spatial transcriptomics is a new direction in molecular biology where, in addition to measuring the gene expression levels of individual cells, one also has information about the spatial location of the cells \citep{t1,t2,t3,t4}. To understand how the spatial location of a cell affects its phenotype, it is natural to consider a model as in \eqref{eq:logisticmodel} with a nearest neighbor graph of the cell locations as the underlying network. 

We consider the Visium spatial gene expression data set for human breast cancer (see \url{https://www.10xgenomics.com/spatial-transcriptomics} for details about the spatial capture technology) available in the Python package {\tt scanpy}. The data set is available at \url{https://support.10xgenomics.com/spatial-gene-expression/datasets} and can be loaded using the Python command: 
{\tt \begin{verbatim} 
scanpy.datasets.visium_sge(sample_id=`V1_Breast_Cancer_Block_A_Section_1')
\end{verbatim} } 

The data consists of $36601$ genes and $3798$ cells along with their spatial locations. To obtain the cell labels, we first filter out the top 50 highly variable genes, that is, the genes whose expression variance is within the top $50$ among all genes. Subsequently, we cluster the cells based on the expression levels of these 50 gene into 2 types (clusters) using the Leiden algorithm \citep{cellclustering}. The output of the clustering algorithm visualized using the Python command {\tt sc.pl.spatial} is shown in Figure \ref{fig:clustering} (a). 
%
%In particular, we are interested in the data which measures a large number of 
%gene expression in each cell. However, the cell type is mostly unknown to practitioners but a common belief is for those cells that are geographically close, they are more likely to be similar in terms of functionality in tissue. Intuitively, such geographical property 
%can result in dependence of cell type among cells locating differently. 
% 
Using the cell labels obtained as above and the first 100 highly variable genes as the covariates, we then fit the model \eqref{eq:logisticmodel} with the 1-nearest neighbor graph of the spatial location of the cells as the underlying network, using the PMPL method. The optimal $\lambda$ is chosen using the BIC criterion. 
 
%
%In this study, we conduct the analysis 
%according to the following steps
%\begin{enumerate}
%	\item Filter out the genes whose expression variance is among top $50$ over all genes;
%	\item Use the filtered $50$ gene expression as feature to cluster the cells into two types;
%	\item Apply the proposed model with the cluster label and top $100$ variable genes to select genes which tend to highly correlate with the cell type;
%	\item Use the selected genes to re-cluster the cell.
%\end{enumerate}
% 
%The rationale behind the first filtering step is most of genes are regarded as ``noise" as opposed to signal. 
%Together with the gene-cell dataset, the spatial coordinate where each cell locates is also provided. We use 
%the spatial information to construct the $1$-nearest-neighbor graph for the $3798$ cells and use the resulting graph in the model we proposed. 

%\subsection{Result} 

The PMPL method with the BIC chosen regularization parameter, selects 6 genes among the top 100 highly variable genes. Among the selected ones, four of them are actually in the top $50$ highly variable gene set obtained in the first filtering step. These genes are shown in Table \ref{tab:gene_selected}. Next, we re-cluster the cells based on only the  6 selected genes (see Figure \ref{fig:clustering} (b)). 
Interestingly, just using the $6$ selected genes we can recover the clustering result obtained with the top 50 variable genes with high accuracy. %We also show the clustering result using the $4$ selected genes among top $50$ variable genes in Figure \ref{fig:clustering} (c). Despite minor differences, the three results are very much aligned. 
%This indicates that the selected genes are highly important when generating the clustering labels.
This illustrates how incorporating spatial information can significantly reduce dimensionality for clustering single cell data and the usefulness of our method in selecting relevant genes.

\begin{table}[!ht]
	\begin{center} 
 \small 
		\begin{tabular}{ |c|c|c|c|c| } 
		\hline
		Selected genes among top $100$ & Estimated coefficients & Selected genes among top $50$ \\ 
		\hline 
		S100A9 & 0.0087 & S100A9 \\ 
		CPB1 & -0.0330  & CPB1 \\ 
		SPP1 & 0.0123 & SPP1 \\ 
		CRISP3 & 0.1465 & CRISP3 \\ 
		\hline
		SLITRK6 & 0.1276 &  \\ 
		IGLC2 & -0.0392 &  \\ 
		\hline
		\end{tabular}
		\caption{\label{tab:gene_selected}Names of the selected genes and the estimates of the corresponding regression coefficients. The estimate of $\beta$ is $\hat \beta = 0.1203$. }
		\end{center}		
\end{table}

\normalsize 

\begin{figure}[H]
	\centering 
			\begin{minipage}{0.45\linewidth}
		\centering
		\includegraphics[width=\textwidth]{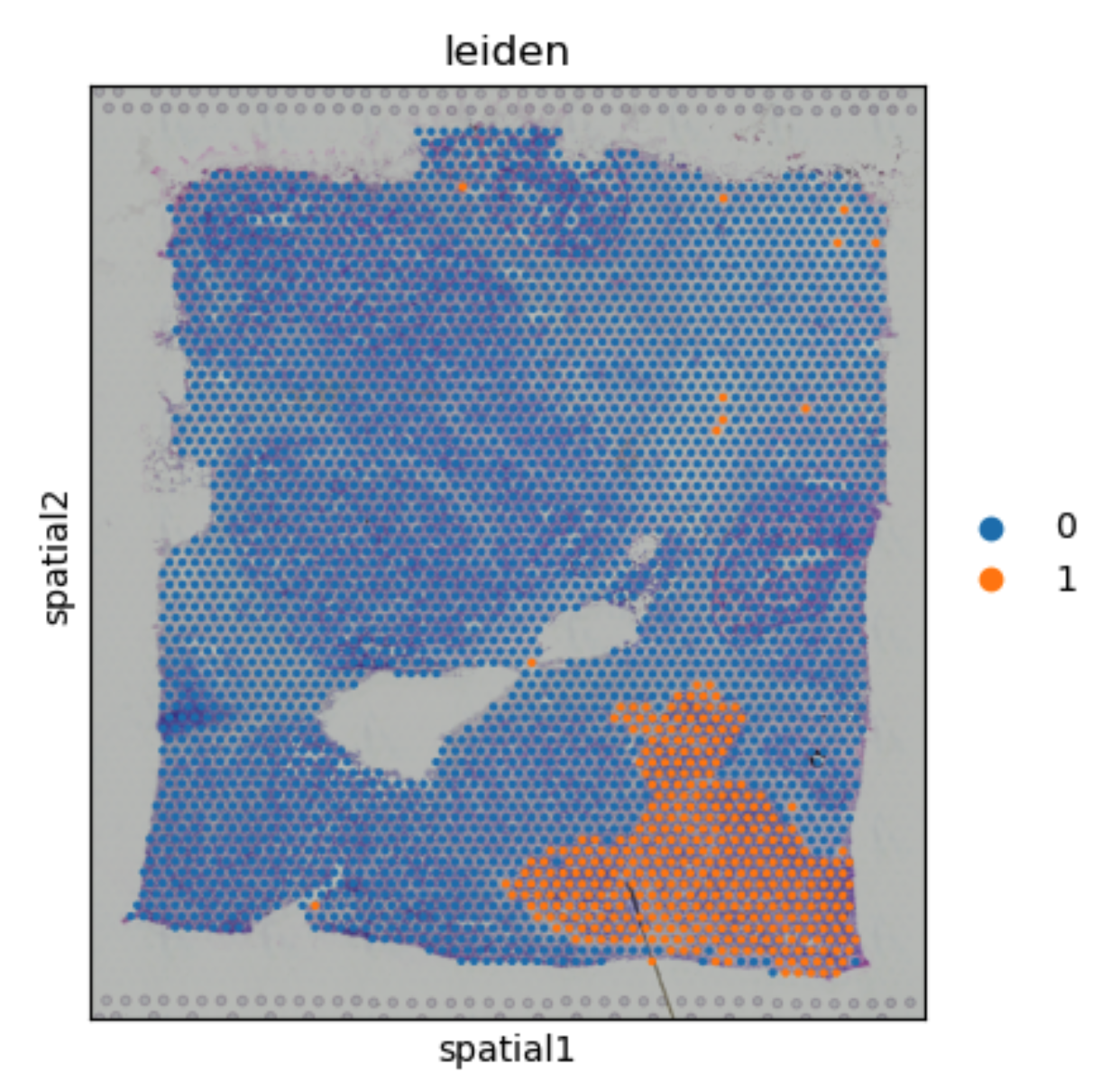} \\
		\small{(a)} 
		\end{minipage}
		%\label{fig:mvn_local} 
				\begin{minipage}{0.45\linewidth}
		\centering
		\includegraphics[width=\textwidth]{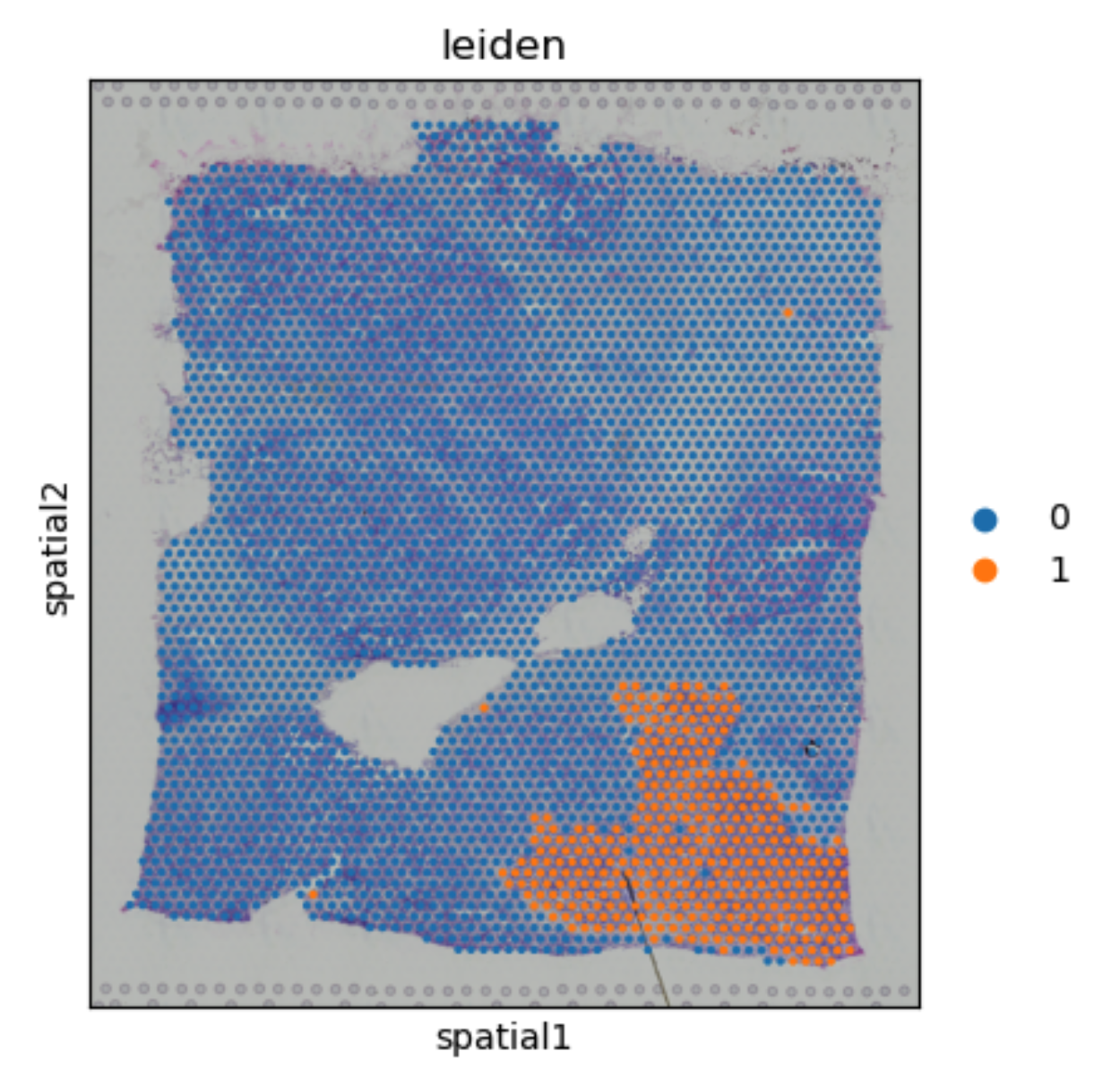} \\ 
		\small{(b)} 
		\end{minipage}
		%\label{fig:mvn_copula_local} 
%				\begin{minipage}{0.3\linewidth}
%		\centering
%		\includegraphics[width=\textwidth]{selection-figure.png} \\ 
%		\small{(c)} 
%		\end{minipage}
		%\label{fig:mvt} 
	\caption{\small{Clustering results using the Leiden algorithm: (a) with the top $50$ highly variable genes, (b) with the $6$ selected genes.} }
	\label{fig:clustering} 
\end{figure}

To capture the spatial dependence one can, more generally, consider the $K$-nearest neighbor graph (instead of the 1-nearest neighbor graph as above) of the spatial locations of the cells in the model \eqref{eq:logisticmodel}. To understand the sensitivity of the PMPL method on the choice of the number of nearest neighbors, we repeat the experiment with $K=1$, $K=2$, and $K=3$. The genes selected by the PMPL method and the estimates of the corresponding regression coefficients for each of these settings are shown in Table \ref{tab:varying_nearest_neighbor_selection}. It turns out that for $K=1$ and $K=2$ the genes selected are the same, and for $K=3$ the genes selected match except one (the gene SPP1 is no longer selected). 
%The results show that the selected genes do not change much across different settings of the number of nearest neighbors. 
This shows that the PMPL method is quite robust to choice of the underlying nearest-neighbor graph as long as $K$ is not too large. While one can incorporate more distant spatial dependencies by increasing $K$, 
this makes the graph denser and, as a result, the rate of estimation becomes slower (as shown in Section \ref{sec:GN}). In practice, especially for spatial problems, where the dependence often decreases with distance, choosing a small value of $K$ should suffice.

\begin{table}[!ht]
	\begin{center}
 \small
		\begin{tabular}{ |c|c|c|c|c| } 
		\hline
		 & Selected genes among top $100$ & Estimated coefficients & Selected genes among top $50$ \\ 
		\hline 
		\multirow{6}{*}{$K = 1$}& S100A9 & 0.0087 & S100A9 \\ 
		 & CPB1 & -0.0330  & CPB1 \\ 
		& SPP1 & 0.0123 &  SPP1 \\ 
		& CRISP3 & 0.1465 &  CRISP3 \\ 
		& SLITRK6 & 0.1276 &  \\ 
		& IGLC2 & -0.0392  &  \\ 
		\hline
		\multirow{6}{*}{$K = 2$} & S100A9 & 0.0047 & S100A9 \\ 
		 & CPB1 & -0.0250  & CPB1 \\ 
		& SPP1 & 0.0037 & SPP1 \\ 
		& CRISP3 & 0.1121 &  CRISP3 \\ 
		& SLITRK6 & 0.1131 &  \\ 
		& IGLC2  & -0.0317 &  \\ 
		\hline
		\multirow{5}{*}{$K = 3$}  & S100A9 & 0.0034 & S100A9 \\ 
		 & CPB1 & -0.0177  & CPB1 \\ 
		& CRISP3 & 0.0805 & CRISP3 \\ 
		& SLITRK6 & 0.0845 &  \\ 
		& IGLC2 & -0.0241 &  \\ 
		\hline
		\end{tabular}
		\caption{\label{tab:varying_nearest_neighbor_selection} Names of the selected genes and the estimates of the corresponding regression coefficients for the $K$-nearest-neighbor graph, with $K=1$, $K=2$, and $K=3$. The estimates of $\beta$ are $\hat\beta=0.1203$, $\hat \beta= 0.2434$, and $\hat \beta=0.2870$  for $K=1$, $K=2$, and $K=3$, respectively.}
		\end{center}		
\end{table}

\normalsize

\section{Conclusion} 	
 
Understanding the effect of dependence in high-dimensional inference tasks for non-Gaussian models is an emerging research direction. In this paper, we develop a framework for efficient parameter estimation in a model for dependent network data with binary outcomes and high-dimensional covariates. The model combines the classical high-dimensional logistic regression with the Ising model from statistical physics to simultaneously capture dependence from the underlying network  and the effect of high-dimensional covariates. This dependence makes the model different and the analysis more challenging compared to existing results based on independent samples. In the this paper we develop an efficient algorithm for jointly estimating the effect of dependence and the high-dimensional regressions parameters using a penalized maximum pseudo-likelihood (PMPL) method and derive its rate of consistency. To understand which of the covariates have an effect on the outcome under the presence of network dependence, we also consider the problem of estimation given a fixed (known) level of dependence. Towards this, we show that using the PMPL method the regression parameters can be estimated at the classical high-dimensional rate, despite the presence of dependence, in the entire high-dimensional regime. 
	%This sort of result can also be relevant in the analysis of  profile-likelihood type estimation methods, where one first  maximizes the (pseudo)-likelihood function fixing a (nuisance) parameter (in this case $\beta$), then maximizes the profile (pseudo) likelihood over the nuisance parameter. 
	%This method of estimation has been explored in spatial econometrics, specifically in the spatial autoregressive model (SAR), a cousin of the logistic model studied in the paper for continuous response.  
 We expect the model to be broadly useful in network econometrics and spatial statistics for understanding dependent binary data with an underlying network geometry. 
As an application, we apply the proposed model to select genes in spatial transcriptomics data.

Various questions remain and future directions emerge. Theoretically, it would be interesting to see if the conditions for joint estimation can be relaxed.  Computationally, it would be interesting to explore more efficient sampling schemes for Ising models with covariates. Incorporating dependence in other generalized linear models and high-dimensional distributions, through the lens of the Ising and more general graphical models, is another interesting direction for future research.

	% Acknowledgements and Disclosure of Funding should go at the end, before appendices and references
	\section{Acknowledgement} 
	Bhaswar B. Bhattacharya thanks Rajarshi Mukherjee and Nancy R. Zhang for many helpful discussions. The authors also thank the anonymous referees for their  insightful comments which improved the quality and the presentation of the paper.  The authors are also grateful to Sagnik Nandy for his help with the data analysis. Bhaswar B. Bhattacharya was supported by NSF CAREER grant DMS 2046393 and a Sloan Research Fellowship. Somabha Mukherjee was supported by the National University of Singapore start-up grant WBS A0008523-00-00 and the FoS Tier 1 grant WBS A-8001449-00-00. George Michailidis was supported by NSF grant DMS 2334735.

	\appendix

	\small

	\vskip 0.2in
	%
	%\bibliography{citations}
	%

	\appendix

	\normalsize 

\bigskip\bigskip\noindent
%\textbf{\Large Appendix}
	\appendix
	
	\section{Proof of Theorem \ref{thm:estimate}}\label{prfinalmain} 
	
Theorem \ref{thm:estimate} is a consequence of the following more general result which provides rates of consistency for the PMPL estimate in terms of $\|\bm A\|_F^2$.

	\begin{proposition}\label{ppn:estimateF} 
		 Suppose that Assumptions \ref{asm:a1}, \ref{asm:a2}, and \ref{asm:a3} hold.  Then, there exists a constant $\delta > 0$ such that by choosing $\lambda := \delta \sqrt{\log(d+1)/N}$ in the objective function in \eqref{eq:pseudolikelihood} we have,
		\begin{align*}%\label{eq:ratelogdN}
			\|\hat{\bg} - \bg \|_2 = O_s\left(\sqrt{\frac{\log d}{\|\bm A\|_F^4/N}}\right)  \quad \text{ and } \quad \|\hat{\bg} - \bg \|_1  = O_s\left(\sqrt{\frac{\log d}{\|\bm A\|_F^4/N}}\right) ,
		\end{align*} 
		with probability $1-o(1)$, as $N\rightarrow \infty$ and $d \rightarrow \infty$ such that $d = o(\|\bm A\|_F^2)$ and $\log d = o(\|\bm A\|_F^4/N)$. 
	\end{proposition}

Note that when $\liminf_{N \rightarrow \infty} \frac{1}{N} \|\bm A\|_F^2 > 0$, then rates in Theorem \ref{ppn:estimateF} is an immediate consequence of Proposition \ref{ppn:estimateF}. 

%
%\begin{remark}\label{remark:graphconsistencyrate} 
%\textcolor{blue}{{\em Recall from \eqref{eq:graph} that when the dependency arises from underlying network structure, then to ensure that the model \eqref{eq:logisticmodel} has non-trivial scaling properties, one needs to chose the interaction matrix $\bm A$ as the scaled adjacency matrix of the network $G_N$, that is, 
%$$\bm A = \frac{N}{2|E(G_N)|} \mathcal{A}(G_N), $$ 
%where $\mathcal{A}(G_N)$ is the adjacency matrix of the graph $G_N$ and 
%$|E(G_N)|$ denotes the number of edges of $G_N$. 
%Under this scaling of $\bm A$, the rate of convergence in Proposition \ref{ppn:estimateF} becomes:
%$$\|\hat{\bm \gamma}-\bm \gamma\|_k =  O_s\left(\sqrt{\frac{|E(G_N)| \log d}{N^3}}\right),$$ for $k=1,2$. In particular, if $G_N$ is $\Delta$-regular (that is, all vertices have of $G_N$ has degree $\Delta$), then the rate of convergence becomes $O_s(\sqrt{ \Delta \log d/N})$, if $d = o(N/\Delta)$ and $\log d = o(N/\Delta^2)$.} } 
%\end{remark}
%

The rest of this section is devoted the proof of Proposition \ref{ppn:estimateF}. To this end, recall from \eqref{eq:betatheta} that our PMPL estimator is defined as:
	\begin{equation}\label{eq:pseudolikelihood}
		(\hat{\beta},\hat{\bh}^\top) :=  \underset{({\beta},{\bh}^\top) \in \mathbb{R}^{d+1}}{\mathrm{argmin}} L_N({\beta},{\bh}) + \lambda \norm{\bh}_1 
	\end{equation}
	where $\lambda > 0$ is a tuning parameter and
	\begin{eqnarray*}
		L_N({\beta},{\bh}) = \frac{1}{N} \sum_{i=1}^N \left[\log \cosh \left({\beta} \sum_{j=1}^N A_{ij} X_j + {\bh}^\top \bm Z_i\right) - X_i \left({\beta} \sum_{j=1}^N A_{ij} X_j + {\bh}^\top \bm Z_i\right)\right] , 
	\end{eqnarray*}
	is as defined in \eqref{eq:LNbetatheta} (where we have dropped the additive factor of $\log 2$). To begin with, note that since Assumption \ref{asm:a1} holds, by scaling the interaction matrix and the covariate vectors by $\|\bm A\|_\infty$ we can assume without loss of generality, 
	\begin{align}\label{eq:Anorm}
		\sup_{N\ge 1} \|\bm A\|_\infty \le 1.
	\end{align} 
	The first step towards the proof of Theorem  \ref{thm:estimate} is to establish the concentration of the pseudo-likelihood gradient vector $\nabla L_N(\hat{\bg})$ in the $\ell_\infty$ norm. This is formalized in the following lemma which is proved in Section \ref{sec:devondpf}.

	\begin{lemma}[Concentration of the gradient]\label{lm:gradient} For $\hat \bg$ and any $\lambda > 0$, 
		\begin{align}\label{eq:gradientestimate}
			\norm{\nabla L_N(\hat{\bg})}_\infty \leq \lambda.
		\end{align} 
		Moreover, there exists  $\delta > 0$ such that with  $\lambda := \delta \sqrt{\log(d+1)/N}$  the following holds: 
		\begin{align}\label{eq:gradient}
			\bp\left(\norm{\nabla L_N(\bg)}_\infty > \frac{\lambda}{2}\right) = o(1), 
		\end{align} 
		where the $o(1)$-term goes to infinity as $d \rightarrow \infty$. 	
	\end{lemma}
	
	The next lemma shows that the pseudo-likelihood function is strongly concave with high probability. The proof of this lemma is given in Section \ref{poshes}.  
	%\textcolor{red}{Check this for the partial estimation theory} 
	
	\begin{lemma}[Strong concavity of pseudo-likelihood] \label{rsccond} Suppose the assumptions of Theorem \ref{thm:estimate} hold. Then, there exists a constant $\kappa  := \kappa (s, M, \beta, \Theta) > 0$, such that
		$$L_N(\hat{\bg}) - L_N(\bg) - \nabla L_N(\bg)^\top (\hat{\bg}-\bg) \geq \kappa   \frac{\|\bm A\|_F^2 \norm{ \hat{\bg}-\bg }_2^2}{N},$$
		with probability $1-o(1)$. 
	\end{lemma}

	The proof of Theorem \ref{thm:estimate} can now be easily completed using the above lemmas. Towards this define:
	$$S := \{1\le i\le d: \theta_i \neq 0\}.$$ Moreover, for any vector $\bm a \in \mathbb{R}^{p}$ and any set $Q \subseteq \{1,\ldots,p\}$, we denote by $\bm a_Q$ the vector $(a_i)_{i\in Q}$. Now, for the constant $\kappa $ as in Lemma \ref{rsccond}, consider the event 
	\begin{align*}
	\cE_{N} := \Big \{ \bm X \in \cC_N: & \norm{\nabla L_N(\bg)}_\infty \leq \tfrac{\lambda}{2} \nonumber \\ 
	& \text{ and } L_N(\hat{\bg}) - L_N(\bg) - \nabla L_N(\bg)^\top (\hat{\bg}-\bg) \geq \kappa   \frac{\|\bm A\|_F^2 \norm{ \hat{\bg}-\bg }_2^2}{N} \Big \}. 
	\end{align*}
	Clearly, from Lemma \ref{lm:gradient} and Lemma \ref{rsccond}, $\mathbb P(\cE_N^c) = o(1)$.

	Next, suppose $\bm X \in \cE_N$. From the definition of $\hat{\bg}$ it follows that 
	\begin{equation}\label{lneqn}
		L_N(\hat{\bg}) + \lambda \|\hat{\bh}\|_1 \leq L_N(\bg) + \lambda \|\bh\|_1.
	\end{equation}
	Hence,
	\begin{align}\label{firsteq}
		\lambda(\|\bh\|_1 - \|\hat{\bh}\|_1) & \geq L_N(\hat{\bg}) - L_N(\bg) \nonumber \\ 
		& = \nabla L_N(\bg)^\top (\hat{\bg}-\bg) + (L_N(\hat{\bg}) - L_N(\bg) - \nabla L_N(\bg)^\top (\hat{\bg}-\bg)) \nonumber \\ 
		& \geq -\|\nabla L_N(\bg)\|_\infty \|\hat{\bg}-\bg\|_1 + (L_N(\hat{\bg}) - L_N(\bg) - \nabla L_N(\bg)^\top (\hat{\bg}-\bg)) \nonumber \\ 
		& \geq -\frac{\lambda \|\hat{\bg}-\bg\|_1 }{2}  + (L_N(\hat{\bg}) - L_N(\bg) - \nabla L_N(\bg)^\top (\hat{\bg}-\bg)), 
	\end{align}
	where the last step uses $\norm{\nabla L_N(\bg)}_\infty \leq \tfrac{\lambda}{2}$, for $\bm X \in \cE_N$. Next, note that 	
	\begin{align}
		\|\hat{\bh}\|_1  = \|\bh_{S} + (\hat{\bh}-\bh)_S\|_1 + \|(\hat{\bh}-\bh)_{S^c}\|_1 
		& \ge \|\bh_{S}\|_1 - \|(\hat{\bh}-\bh)_S\|_1 + \|(\hat{\bh}-\bh)_{S^c}\|_1 \nonumber \\ 
		& = \|\bh\|_1 - \|(\hat{\bh}-\bh)_S\|_1 + \|(\hat{\bh}-\bh)_{S^c}\|_1 . \nonumber 
	\end{align} 		
	This implies,  
	\begin{equation}\label{secstep2}
		\|(\hat{\bh}-\bh)_S\|_1 - \|(\hat{\bh}-\bh)_{S^c}\|_1 \ge  \|\bh\|_1 - \|\hat{\bh}\|_1   .  
	\end{equation} 
	Combining \eqref{firsteq} and \eqref{secstep2} it follows that
	\begin{align}\label{mainbound}
		\lambda\left(\|(\hat{\bh}-\bh)_S\|_1 - \|(\hat{\bh}-\bh)_{S^c}\|_1 + \frac{\|\hat{\bg}-\bg\|_1}{2}  \right)  & \geq L_N(\hat{\bg}) - L_N(\bg) - \nabla L_N(\bg)^\top (\hat{\bg}-\bg) \nonumber \\  
		& \geq  \kappa   \frac{\|\bm A\|_F^2 \norm{ \hat{\bg}-\bg }_2^2}{N} , 
	\end{align} 
	where the last inequality uses $L_N(\hat{\bg}) - L_N(\bg) - \nabla L_N(\bg)^\top (\hat{\bg}-\bg) \geq \kappa   \frac{\|\bm A\|_F^2 \norm{ \hat{\bg}-\bg }_2^2}{N}$, for $\bm X \in \cE_N$. Using $\|\hat{\bg}-\bg\|_1 =  \|(\hat{\bh}-\bh)_S\|_1 + \|(\hat{\bh}-\bh)_{S^c}\|_1 + | \hat \beta - \beta | $ in the LHS of \eqref{mainbound} now gives, 
	\begin{align*}
		\|\hat{\bg}-\bg\|_2^2  & \leq \frac{\lambda N}{\kappa \|\bm A\|_F^2} \left(\frac{3 \|(\hat{\bh}-\bh)_S\|_1 }{2} - \frac{\|(\hat{\bh}-\bh)_{S^c}\|_1}{2} + \frac{ | \hat \beta - \beta | }{2} \right) \nonumber \\ 
		& \lesssim_{\kappa } \frac{\lambda N}{\|\bm A\|_F^2} \left ( \|(\hat{\bh}-\bh)_S\|_1 + | \hat \beta - \beta | \right ) \nonumber \\ 
		& \lesssim_{\kappa}  \frac{\lambda N}{\|\bm A\|_F^2}  \sqrt{s+1} \sqrt{ \sum_{i \in S} ( \hat \theta_i - \theta_i )^2 + (\hat \beta - \beta)^2 } \nonumber \\ 
		& \lesssim \frac{\lambda N}{\|\bm A\|_F^2} \sqrt{s} \|\hat{\bg}-\bg\|_2 . 
	\end{align*}
	This implies, for $\bm X \in \cE_N$, 
	\begin{align*} 
		||\hat{\bg}-\bg||_2 = O_{\kappa, \delta}\left(\frac{N}{\|\bm A\|_F^2}\sqrt{\frac{s \log d}{N}} \right). 
	\end{align*} 
	This completes the proof of the $\ell_2$ error rate in Theorem \ref{thm:estimate}, since $\mathbb P(\cE_N) = 1 - o(1)$. The bound on the $\ell_1$ error $||\hat{\bg}-\bg||_1$ is shown in Lemma \ref{lm:L1}.

	\subsection{Proof of Lemma \ref{lm:gradient}} 
	\label{sec:devondpf}

	First, we establish that $||\nabla L_N(\hat \bg)||_\infty \leq \lambda$. Fix $1\le i\le d$ and define the univariate function:
	$$f(x) := L_N(\hat \beta, \hat \theta_1, \ldots,\hat{\theta}_{i-1},x,\hat{\theta}_{i+1},\ldots,\hat{\theta}_{d}).$$
	Note that $f'(\hat{\theta}_i) = \frac{\partial}{\partial \underline{\theta}_i } \nabla  L_N(\underline{\bg})\mid_{\underline{\bg}= \hat{\bg}}$. Now, by the definition of $\hat{\bg}$  we have, $f(\hat{\theta}_i) + \lambda |\hat{\theta}_i| \le f(x) + \lambda |x|$, which implies, 
	$$f(x)-f(\hat{\theta}_i) \ge \lambda(|\hat{\theta}_i| - |x|).$$ Then consider the following cases: 
	
	\begin{itemize}
		
		\item $\hat{\theta}_i > 0$: Then, for all $x > \hat{\theta}_i$, $$\frac{f(x)-f(\hat{\theta}_i)}{x-\hat{\theta}_i} \ge \lambda \frac{|\hat{\theta}_i|-|x|}{x-\hat{\theta}_i} = -\lambda. $$ Similarly, for all $0<x<\hat{\theta}_i$,	$\frac{f(x)-f(\hat{\theta}_i)}{x-\hat{\theta}_i} \le \lambda \frac{|\hat{\theta}_i|-|x|}{x-\hat{\theta}_i} = -\lambda$. This implies,  $f'(\hat{\theta}_i) = -\lambda$.
		
		\item $\hat{\theta}_i < 0$: Then, for all $0>x > \hat{\theta}_i$, 
		$$\frac{f(x)-f(\hat{\theta}_i)}{x-\hat{\theta}_i} \ge \lambda \frac{|\hat{\theta}_i|-|x|}{x-\hat{\theta}_i} = \lambda.$$
		Similarly, $x<\hat{\theta}_i$, $\frac{f(x)-f(\hat{\theta}_i)}{x-\hat{\theta}_i} \le \lambda \frac{|\hat{\theta}_i|-|x|}{x-\hat{\theta}_i} = \lambda$. Hence, in this case,  $f'(\hat{\theta}_i) = \lambda$. 
		
		\item $\hat{\theta}_i = 0$: In this case,  for all $x > 0$, $$\frac{f(x) - f(0)}{x} \ge -\lambda \frac{|x|}{x} = -\lambda$$ and for all $x<0$, $\frac{f(x) - f(0)}{x} \le -\lambda \frac{|x|}{x} = \lambda$. Since $f'$ exists, this implies that $|f'(0)| \le \lambda$. 
	\end{itemize} 
	
	Next, define 
	$$g(x) := L_N(x, \hat \theta_1, \ldots,\hat{\theta}_{i-1}, \hat \theta_i, \hat{\theta}_{i+1},\ldots,\hat{\theta}_{d}).$$ 
	Note that $g'(\hat \beta) = \frac{\partial}{\partial \underline{ \beta } } \nabla  L_N(\underline{\bg})\mid_{\underline{\bg}= \hat{\bg}}$. By the definition of $\hat{\bg}$  we have, $g( \hat \beta ) \le g(x) $. This implies that $g'(\hat \beta) =0$.  Combining the above, it follows that 
	$$||\nabla L_N(\hat \bg)||_\infty = \max_{j \in [d]} |f'(\hat{\theta}_i)| \leq \lambda,$$ 
	completing the proof of \eqref{eq:gradientestimate}.

	Next, we establish the concentration of $\norm{\nabla L_N(\bg)}_\infty$ as in \eqref{eq:gradient}. For this step, we require the following definitions. For $1 \leq i \leq N$, denote  
	$$m_i(\bs) := \sum_{j=1}^N a_{ij} X_j.$$  
	Define functions $\phi_i : \cC_N \rightarrow \R$, for $1 \leq i \leq N$, as follows: 	
	\begin{align}\label{eq:function1}
		\phi_{i}(\bm x) := -\frac{1}{N}\left\{ m_i(\bm x) \left(X_i - \tanh(\beta m_i(\bm x) + \bh^\top \bm Z_i)\right) \right\}, 
	\end{align}  
	for $\bm x = (x_1, x_2, \ldots, x_n) \in \cC_N$. Similarly, define functions $\phi_{i, s}: \cC_N \rightarrow \R$,  for $1 \leq i \leq N$ and $1 \leq s \leq d$, as follows: 	
	\begin{align}\label{eq:function2}
		\phi_{i, s}(\bm x) := -\frac{1}{N} \left\{ Z_{i, s} \left(x_i - \tanh(\beta m_i(\bm x) + \bh^\top \bm Z_i)\right) \right\} .
	\end{align} 
	Note that $\nabla L_N = (\frac{\partial L_N}{\partial \beta}, \frac{\partial L_N}{\partial \theta_1}, \ldots, \frac{\partial L_N}{\partial \theta_d})^\top$ where 
	$$\frac{\partial L_N}{\partial \beta} = \sum_{i=1}^N \phi_{i}(\bm X) \quad \textrm{and}\quad \frac{\partial L_N}{\partial \theta_s} = \sum_{i=1}^N \phi_{i, s}(\bm X), \quad \text{ for } 1 \leq s \leq d.$$ 
	To establish the concentration of $\norm{\nabla L_N(\bg)}_\infty$, we use the conditioning trick from \citet{cd_ising_estimation}, which allows to reduce the model \eqref{eq:logisticmodel} to an Ising model in the Dobrushin regime (where the correlations are sufficiently weak and the model approximately behaves like a product measure), by conditioning on a subset of the nodes.  To describe this, we need the following definition: 
	\begin{definition}%\label{defRg}
		Suppose that $\boldsymbol{\sigma} \in \{-1,1\}^N$ is a sample from the Ising model:
		\begin{equation}\label{mgamma}
			\p_{\beta,\bm h}(\boldsymbol{\sigma}) ~\propto~ \exp\left( \boldsymbol{\sigma}^\top \bm D \boldsymbol \sigma + \sum_{i=1}^N h_i\sigma_i \right) , 
		\end{equation}
		where $\bm h= (h_1, h_2, \ldots, h_n)^\top \in \R^n$ and $\bm D$ is a symmetric matrix with zeros on the diagonal with $\sup_{N \geq 1} \|\bm D\|_\infty \le R$. Moreover, suppose that with probability $1$, 
		$$\min_{1\le i \le N} \mathrm{Var}(\sigma_i|\boldsymbol{\sigma}_{-i}) \ge \Upsilon,$$
		for some $\Upsilon \geq 0$. Then, the model \eqref{mgamma} is referred to as an $(R,\Upsilon)$-Ising model.
	\end{definition}
	
	Recently, \citet{cd_ising_estimation} developed a technique for reducing an  $(R,\Upsilon)$-Ising model to an  $(\eta,\Upsilon)$-Ising model,  for $0 < \eta < R$, by conditioning on a subset of vertices. As a consequence, by choosing $\eta$ one can ensure that the conditional model is in the Dobrushin high-temperature regime.  Although the Ising model  studied in \citet{cd_ising_estimation} is different, the same proof extends to our model \eqref{eq:logisticmodel} as well. We formalize this in the following lemma, which is  proved in Appendix \ref{sec:combinatorial1pf}. 
	
	\begin{lemma}\label{combinatorial1}
		Fix $R > 0$ and $\eta \in (0,R)$. Let $\bs\in \{-1,1\}^N$ be a sample from an $(R,\Upsilon)$- Ising model. Then there exist subsets $I_1,\ldots,I_\ell\subseteq [N]$ with $\ell \lesssim R^2 \log N/\eta^2$ such that:
		\begin{enumerate}
			\item For all $1\le i \le N$,
			$$|\{j \in [\ell]: i \in I_j \}| = \lceil \eta \ell / 8R\rceil$$
			\item For all $1 \le j \le \ell$, the conditional distribution of $\bm X_{I_j}$ given $\bm X_{I_j^c} := (X_u)_{u\in [N]\setminus I_j}$ is an $(\eta,\Upsilon)$- Ising model.
		\end{enumerate}
		Furthermore, for any non-negative vector $\bm a \in \mathbb{R}^N$, there exists $j \in \ell$ such that
		\begin{align}\label{eq:a}
			\sum_{i \in I_j} a_i \ge \frac{\eta}{8R}\sum_{i=1}^N a_i.\
		\end{align}
	\end{lemma}

	We will apply the above result to our model \eqref{eq:logisticmodel}. Towards this, set $\bm D = \beta \bm A$ and $h_i=\bm \theta^\top \bm Z_i$, for $1 \leq i \leq N$ in \eqref{mgamma}. Under this parametrization, \eqref{eq:logisticmodel} is an $(R,\Upsilon)$-Ising model as shown below:

	\begin{lemma}\label{eq:RU} The model \eqref{eq:logisticmodel} is a $(R, \Upsilon)$-Ising model with $R=|\beta|$ and any $\Upsilon = e^{-4\Theta M s - 4 |\beta| }$. 
	\end{lemma}
	
	\begin{proof} Note that for every $j \in [N]$, 
		$$\mathrm{Var}(X_j|\bm X_{[N]\backslash\{j\}}) = 4p(1-p),$$ 
		where $p= \p(X_j =1| \bm X_{[N]\backslash\{j\}} )$. Now, denoting the elements of of the matrix $\bm D$ as $(d_{ij})_{1 \leq i, j \leq N}$, note that  
		$$p = \frac{\exp\left(h_j + \sum_{v \in [N]\setminus \{j\}} d_{jv} X_v\right)}{2 \cosh\left( h_j +  \sum_{v \in [N] \setminus \{j\}} d_{jv}X_v\right)}.$$ 
		Then using the inequality $\frac{e^x}{2\cosh(x)} \ge \frac{1}{2} e^{-2|x|}$  gives, 
		\begin{equation}\label{minp}
			\min\{p, 1-p\} \ge \frac{1}{2} \exp\left(-2\left|h_j + \sum_{v \in [N]\setminus \{j\}} d_{jv}X_v\right|\right) . 
		\end{equation}
		Next, using
		$$\left|h_j + \sum_{v \in [N] \setminus \{j\}} d_{jv}X_v\right| = \left|\bh^\top \bm Z_j + \sum_{v \in [N]\setminus \{j\}} d_{jv}X_v\right| \le |\bh^\top \bm Z_j| +  \|\bm D\|_\infty \le \Theta M s + |\beta|, $$
		it follows from \eqref{minp} that
		$$\min\{p,1-p\} \ge \frac{1}{2} e^{-2\Theta M s - 2|\beta|}.$$
		Hence, we have:
		$$\mathrm{Var}(X_j|\bm X_{[N]\setminus\{j\}}) \ge e^{-4\Theta M s - 4 |\beta|} . $$   
		This completes the proof of the lemma, since $\| \bm D \|_\infty \leq |\beta|$ (since $\| \bm A \|_\infty \leq 1$ by \eqref{eq:Anorm}).  \end{proof}

	By the above lemma, model \eqref{eq:logisticmodel} is an $(R, \Upsilon)$-Ising model, with $R := |\beta| $ and $\Upsilon = e^{-4\Theta M s - 4 |\beta| }$. Next, choose $$\eta := \min\left\{\frac{1}{16},~|\beta| \right\},$$ and suppose  $I_1,\ldots,I_\ell$ be subsets of $[N]$ as in Lemma \ref{combinatorial1}. Then, defining $\ell' := \lceil \eta \ell / R \rceil$ we get, 
	\begin{equation}\label{intermediate1}
		\left|\sum_{i=1}^N \phi_{i}(\bm X)\right| = \left|\frac{1}{\ell'}\sum_{r=1}^{\ell} \sum_{i \in I_r} \phi_{i}(\bm X)\right| \le \frac{1}{\ell'}\sum_{r=1}^{\ell} \left|\sum_{i \in I_r} \phi_{i}(\bm X)\right| \le \frac{\ell}{\ell'} \max_{r \in [\ell]}\left|Q_{r}(\bm X)\right|,
	\end{equation}
	where 
	\begin{align}\label{eq:qr}
		Q_r (\bm X) := \sum_{i \in I_r} \phi_{i}(\bm X),
	\end{align} 
	for $r \in [\ell]$. Similarly, it follows that, for $s \in [d]$,  
	\begin{equation}\label{intermediate2}
		\left|\sum_{i=1}^N \phi_{i, s}(\bm X)\right| = \left|\frac{1}{\ell'}\sum_{r=1}^{\ell} \sum_{i \in I_r} \phi_{i, s}(\bm X)\right| \le \frac{1}{\ell'}\sum_{r=1}^{\ell} \left|\sum_{i \in I_r} \phi_{i, s}(\bm X)\right| \le \frac{\ell}{\ell'} \max_{r \in [\ell]}\left|Q_{r, s}(\bm X)\right|,
	\end{equation}
	where 
	\begin{align}\label{eq:qrs}
		Q_{r, s} (\bm X) := \sum_{i \in I_r} \phi_{i, s}(\bm X).
	\end{align} 
	The following lemma shows that the functions $Q_r$ and $Q_{r, s}$ are Lipschitz in the Hamming metric. The proof is given in Appendix \ref{sec:partialbetalipspf}. 
	
	\begin{lemma}\label{partialbetalips}
		For $r \in [\ell]$ and $s \in [d]$, let $Q_r$ and $Q_{r, s}$ be as defined in \eqref{eq:qr} and \eqref{eq:qrs}, respectively. Then for any two vectors $\bs,\bs' \in \cC_N$ differing in just the $k$-th coordinate, the following hold: 
		\begin{itemize}
			
			\item[(1)] For $r \in [\ell]$, 
			$$|Q_{r}(\bs) -Q_{r}(\bs')| \leq \frac{2|\beta| + 6}{N}.$$
			
			\item [(2)] Similarly, for $r \in [\ell]$ and $s \in [d]$, 
			$$|Q_{r, s}(\bs) - Q_{r, s}(\bs')| \leq \frac{2|Z_{k,s}|}{N} + \frac{2|\beta|}{N}\sum_{i=1}^N \left| Z_{i,s} a_{ik}\right| =: c_k. %\frac{2M(1+\beta)}{N}.
			$$ 
		\end{itemize} 
	\end{lemma}

	Using the above result together with Lemma \ref{eq:RU}, we can now establish the concentrations of $Q_{r} (\bm X)$ and $Q_{r, s} (\bm X) $, conditional on $\bm X_{I_r^c}$. To this end, recalling the definition of $\phi_i(\cdot)$ from \eqref{eq:function1} note that $\e\left(Q_r(\bm X)| \bm X_{I_r^c}\right) = 0$, for $r \in [\ell]$. Moreover, by Lemma \ref{eq:RU}, $\bm X| \bm X_{I_r^c}$ is an $(\eta, \Upsilon)$-Ising model, where $\eta \leq \frac{1}{16}$. Therefore, since $Q_r$ is $O_{\beta}(1/N)$-Lipschitz (by Lemma \ref{partialbetalips}), applying Theorem 4.3 and Lemma 4.4 in \citet{scthesis} gives, for every $t > 0$, 
	\begin{equation}\label{conc1}
		\bp\left(\left|Q_r(\bm X)\right| \geq t~\Big| \bm X_{I_r^c}\right) \leq 2e^{-O_\beta(Nt^2)}. 
	\end{equation}	
	Similarly, recalling \eqref{eq:function2}, it follows that for each $r \in [\ell]$ and $s \in [d]$, $\e(Q_{r, s}(\bm X) | \bm X_{I_r^c}) = 0$. Then, since $\sum_{k=1}^N c_k^2 = O_M(1)$ under Assumptions \ref{asm:a1} and \ref{asm:RSC_random}, Lemma \ref{partialbetalips} together with Theorem 4.3 and Lemma 4.4 in \citet{scthesis} gives, for every $t \geq 0$, 
	\begin{equation}\label{conc2}
		\bp\left(\left|Q_{r, s}(\bm X)\right| \geq t~\Big| \bm X_{I_r^c}\right) \leq 2e^{-O_{\beta, M}(Nt^2)} .
	\end{equation}
	
	Hence, combining \eqref{intermediate1}, \eqref{conc1}, \eqref{intermediate2}, \eqref{conc2}, and Lemma \ref{combinatorial1} (which implies that $\ell =O(\log N)$) gives, 
	\begin{equation}\label{final2}
		\p\left(\left|\sum_{i=1}^N \phi_{i}(\bm X)\right| \ge t\right) \le 2e^{- O_{\beta}(Nt^2)} \quad \text{and} \quad	 \p\left(\left|\sum_{i=1}^N \phi_{i, s}(\bm X)\right| \ge t\right) \le 2e^{- O_{\beta, M}(Nt^2)} ,
	\end{equation}
	for each $s \in [d]$. It thus follows from \eqref{final2} and a union bound, that
	\begin{equation}\label{devconc}
		\bp\left(\|\nabla L_N(\bg)\|_\infty \geq t\right) \leq 2(d+1)e^{-K Nt^2},
	\end{equation}
	for some constant $K > 0$, depending on $\beta$ and $M$. Now, choosing $t=  \frac{\lambda}{2} = \frac{1}{2} \delta \sqrt{\log (d+1)/N}$ in \eqref{devconc}  above gives, 
	\begin{equation*}%\label{devconc}
		\bp\left(\|\nabla L_N(\bg)\|_\infty \geq \frac{\lambda}{2} \right) \leq 2(d+1)^{1-\frac{K \delta^2}{4}} = o(1),  
	\end{equation*}
	whenever $\delta^2 < 4/K$. This completes the proof of Lemma \ref{lm:gradient}.

	\subsection{Proof of Lemma \ref{rsccond}}\label{poshes} 
	
	Define the following $(d+1) \times (d+1)$ dimensional matrix, 
	\begin{equation}\label{eq:mZmatrix}
		\bm G :=  \frac{1}{N}\left(
		\begin{array}{cc}
			\bm m^\top \bm m & \bm m^\top \bm Z\\
			\bm Z^{\top} \bm m & \bm Z^\top \bm Z 
		\end{array}
		\right) 
	\end{equation}   
	The key step towards proving Lemma \ref{rsccond} is to show that the lowest eigenvalue of $\nabla^2 L_N$ is bounded away from $0$ with high probability. 
	
	\begin{lemma}\label{mineiglem}
		There exists a constant $C>0$ (depending only on $s,\Theta$ and $M$), such that 
		\begin{align}\label{eq:mineiglem}
			\p\left(\lambda_{\min}(\bm G) \ge \frac{C\|\bm A\|_F^2}{N}\right) \ge 1-e^{-\Omega(\|\bm A\|_F^4/N)},
		\end{align} 
		as $N, d \rightarrow \infty$, such that $d=o(\|\bm A\|_F^2)$.
	\end{lemma}
	%\begin{proof}
	
	The proof of Lemma \ref{mineiglem} is given in Section \ref{sec:mineigenpf}. We first show it can be used to complete the proof of Lemma \ref{rsccond}. To this end, by a second order Taylor series expansion, we know that there exists $\alpha \in (0, 1)$ and 
	$\underline{\bg} = (\underline{\beta},\underline{\bh}^\top) ^\top = \alpha \bg + (1-\alpha) \hat{\bg}$ 
	such that 
	\begin{align}\label{second_taylor}
		L_N(\hat{\bg}) - L_N(\bg) - \nabla L_N(\bg) ^\top (\hat{\bg}-\bg) & = \frac{1}{2} (\hat{\bg}-\bg)^\top \nabla^2 L_N(\underline{\bg}) (\hat{\bg}-\bg) \nonumber \\ 
		& = \frac{1}{2N}\sum_{i=1}^N \frac{(\hat{\bg}-\bg)^\top \bm U_i \bm U_i^\top (\hat{\bg}-\bg)}{\cosh^2(\underline{\beta} m_i(\bs) + \underline{\bh}^\top \bm Z_i)} ,   
	\end{align}
	where $\bm U_i := (m_i(\bs),\bm Z_i^\top)^\top$. Now, note that: 
	\begin{align}\label{eq:B}
		|\underline{\beta} m_i(\bs)| \leq |\underline{\beta}| |m_i(\bs)| \leq  
		|\underline{\beta}| ||A||_\infty \leq |\beta| + |\hat \beta-\beta| \leq |\beta| + \|\hat \bg-\bg\|_1 
	\end{align} 
	and 
	\begin{align}\label{eq:Theta}
		|\underline{\bh}^\top \bm Z_i| 
		\leq \|\underline{\bh}\|_1 \|\bm Z_i\|_\infty \leq M\left(\|\underline{\bh}-\bh\|_1 + \|\bh\|_1\right) \leq M(\|\hat{\bg}-\bg\|_1 + s\Theta).
	\end{align} 
	Since $\cosh$ is an even function and increasing on the positive axis, we obtain
	\begin{equation}\label{eq:nnd}
		\frac{1}{2N}\sum_{i=1}^N \frac{(\hat{\bg}-\bg)^\top \bm U_i \bm U_i^\top (\hat{\bg}-\bg)}{\cosh^2(\underline{\beta} m_i(\bs) + \underline{\bh}^\top \bm Z_i)} \ge  \frac{(\hat{\bg}-\bg)^\top \bm G (\hat{\bg}-\bg) }{2\cosh^2(|\beta|  + (M+1)(\|\hat{\bg}-\bg\|_1) + s M \Theta)},
	\end{equation}
	where $\bm m := (m_1(\bm X),\ldots,m_N(\bm X))^\top$, $\bm Z = (\bm Z_1,\ldots,\bm Z_N)^\top$, and $\bm G$ is as defined in \eqref{eq:mZmatrix}. 
	Combining \eqref{second_taylor} and \eqref{eq:nnd} gives, 
	\begin{align}\label{eq:hessianLN}
		L_N(\hat{\bg}) - L_N(\bg) - \nabla L_N(\bg) ^\top (\hat{\bg}-\bg) & = \frac{1}{2} (\hat{\bg}-\bg)^\top \nabla^2 L_N(\underline{\bg}) (\hat{\bg}-\bg) \nonumber \\ 
		& \ge \frac{(\hat{\bg}-\bg)^\top \bm G (\hat{\bg}-\bg)}{2\cosh^2(|\beta| + (M+1)(\|\hat{\bg}-\bg\|_1) + s M \Theta )} . 
	\end{align}
	
	Next, we establish a high probability upper bound on $\|\hat{\bg}-\bg\|_1$ whenever the conditions of Theorem \ref{thm:estimate} are satisfied.

	\begin{lemma}\label{lm:L1} Suppose \eqref{eq:mineiglem} holds. Then, for $\lambda := \delta \sqrt{\log(d+1)/N}$ as in Lemma \ref{lm:gradient}, 
		$$ \|\hat{\bg}-\bg\|_1 =  O_s\left(\frac{N}{\|\bm A\|_F^2}\sqrt{\frac{\log(d+1)}{N}}\right).$$
		with probability $1-o(1)$, whenever $N, d \rightarrow \infty$ such that $\log d = o(\|\bm A\|_F^4/N)$, 
	\end{lemma}

	\begin{proof} By the convexity of the function $L_N$ it follows from \eqref{lneqn} that 
		\begin{align}\label{firsteqqq}
			\lambda(\|\bh \|_1 - \|\hat{\bh }\|_1) & \geq L_N(\hat{\bg}) - L_N(\bg) \nonumber \\ 
			& \geq \nabla L_N(\bg)^\top (\hat{\bg}-\bg) \nonumber \\ 
			& \geq -\|\nabla L_N(\bg)\|_\infty \|\hat{\bg}-\bg\|_1 \nonumber \\ 
			& \geq -\frac{\lambda \|\hat{\bg}-\bg\|_1 }{2} , 
		\end{align}	
		where the last step uses $\norm{\nabla L_N(\bg)}_\infty \leq \tfrac{\lambda}{2}$, for $\bm X \in \cE_N$. Recall from \eqref{secstep2} that $\|\hat{\bh}\|_1 -   \|\bh\|_1  \ge  	 \|(\hat{\bh}-\bh)_{S^c}\|_1 - \|(\hat{\bh}-\bh)_S\|_1 $. Therefore, from \eqref{firsteqqq}, we have: 
		\begin{align*} 
			\|(\hat{\bh}-\bh)_{S^c}\|_1 - \|(\hat{\bh}-\bh)_S\|_1 \leq \|\hat{\bh}\|_1 -   \|\bh\|_1 & \leq \frac{\| \hat{\bg}-\bg \|_1}{2}  \nonumber \\ 
			& = \frac{\| (\hat{\bh}-\bh) _S\|_1}{2} + \frac{\| (\hat{\bh}-\bh)_{S^c} \|_1}{2} + \frac{|\hat \beta - \beta|}{2}. 
		\end{align*}
		This means, $\|(\hat{\bh}-\bh)_{S^c}\|_1 \leq 3 ( \| (\hat{\bh}-\bh) _S\|_1 + |\hat \beta - \beta| ) $, and hence,
		\begin{align}\label{eq:vS} 
			\|(\hat{\bg}-\bg) \|_1  %
			%& =  \| (\hat{\bg}-\bg) _S\|_1 + \|(\hat{\bg}-\bg)_{S^c}\|_1 \nonumber \\ 
			%& =  \| (\hat{\bg}-\bg) _S\|_1 + \|(\hat{\bh}-\bh)_{S^c}\|_1 + | \hat \beta - \beta | \nonumber \\ 
			& \leq 4  ( \| (\hat{\bh}-\bh) _S\|_1 + |\hat \beta - \beta| ) .
		\end{align} 
		Denote $\mathcal K :=  \| (\hat{\bh}-\bh) _S\|_1 + |\hat \beta - \beta| $. By the Cauchy-Schwarz inequality, 
		\begin{align}\label{eq:L1L2S}
			\mathcal K \leq  \sqrt{s+1} \sqrt{ \sum_{i \in S} ( \hat \theta_i - \theta_i )^2 + (\hat \beta - \beta)^2 }  \leq \sqrt{s+1} || \hat \bg - \bg ||_2 .  
		\end{align} 
		
		Next, for $t \in [0, 1]$, let $\bg_t := t\hat{\bg} + (1-t) \bg$, and $g(t) := (\hat{\bg} - \bg)^\top \nabla L_N(\bg_t).$ Then 
		\begin{equation}\label{step1}
			|g(1)-g(0)| = \big|(\hat{\bg} - \bg)^\top (\nabla L_N(\hat{\bg}) - \nabla  L_N(\bg))\big| \le \|\hat{\bg} - \bg\|_1 \cdot \|\nabla L_N(\hat{\bg}) - \nabla  L_N(\bg)\|_\infty.
		\end{equation}
		Therefore, 
		\begin{align*}
			g'(t) &= (\hat{\bg}-\bg)^\top \nabla^2 L_N(\bg_t) (\hat{\bg}-\bg)\\
			&= \frac{1}{N}\sum_{i=1}^N \frac{(\hat{\bg}-\bg)^\top \bm U_i \bm U_i^\top (\hat{\bg}-\bg)}{\cosh^2(\beta_t  m_i(\bs) + \bh_t^\top \bm Z_i)} \tag*{(where $\bm U_i := (m_i(\bs), \bm Z_i^\top)^\top)$} \\ 
			&\ge \frac{(\hat{\bg}-\bg)^\top \bm G (\hat{\bg}-\bg)}{\cosh^2(|\beta| + (M + 1) \|\bg_t - \bg\|_1 + s M \Theta)} \tag*{(by \eqref{eq:B} and \eqref{eq:Theta})} \\ 
			&\ge  \frac{\|\hat{\bg}-\bg\|_2^2 ~\lambda_{\mathrm{min}}(\bm G)}{\cosh^2(|\beta| + (M+1)  \|\bg_t - \bg\|_1 + s M \Theta)}\\ 
			&\gtrsim_C \frac{\|\hat{\bg}-\bg\|_2^2 \|\bm A\|_F^2}{N\cosh^2(|\beta| + 4 |t| (M+1) \mathcal K + Ms\Theta)} ,  
		\end{align*}
		where the last step uses \eqref{eq:mineiglem} (which holds with probability $1-o(1)$), $C$ is as in Lemma \ref{mineiglem}, and $\|\bg_t - \bg\|_1 = |t| \|(\hat{\bg}-\bg) \|_1 \leq 4 |t| \mathcal K$ (by \eqref{eq:vS}).  Hence, 
		\begin{align}\label{in3}
			|g(1)-g(0)|&\ge g(1) - g(0)=\int_0^1 g'(t)~dt\nonumber\\ &\ge \int_0^{\min\{1,~ 1/ \mathcal K \}} g'(t)~dt\nonumber\\&\gtrsim_{s} \frac{\|\bm A\|_F^2}{N}\|\hat{\bg} - \bg\|_2^2 ~\min \{1, ~ 1/\mathcal K \} . 
		\end{align}
		Combining \eqref{step1} and \eqref{in3} gives, 
		\begin{align} 
			\min\{ \mathcal K, ~1\} & \lesssim_s \frac{ \mathcal K N \| \hat{\bg}-\bg \|_1 }{\|\bm A\|_F^2 \|\hat{\bg}-\bg\|_2^2} \cdot \|\nabla L_N(\hat{\bg}) - \nabla L_N(\bg)\|_\infty \nonumber \\ 
			& \lesssim \frac{ \mathcal K^2 N}{\|\bm A\|_F^2\|\hat{\bg}-\bg\|_2^2} \cdot \|\nabla L_N(\hat{\bg}) - \nabla L_N(\bg)\|_\infty \tag*{ (by \eqref{eq:vS}) } \nonumber \\ 
			\label{eq:in4} & \lesssim \frac{sN}{\|\bm A\|_F^2} \|\nabla L_N(\hat{\bg}) - \nabla L_N(\bg)\|_\infty ,
		\end{align} 
		using \eqref{eq:L1L2S}. Now, recall that, by Lemma \ref{lm:gradient}, with probability $1 - o(1)$,
		$\|\nabla L_N(\bg)\|_\infty \lesssim_{\delta} \sqrt{\log(d+1)/N}$ and $\|\nabla L_N(\hat{\bg})\|_\infty \lesssim \sqrt{\log(d+1)/N}$. Applying this in \eqref{eq:in4} shows that with probability $1-o(1)$, $$\min\{ \mathcal K,~1\} = O_s\left(\frac{N}{\|\bm A\|_F^2}\sqrt{\frac{\log(d+1)}{N}}\right).$$ This implies,  $$\mathcal K = \| (\hat{\bh}-\bh) _S\|_1 + |\hat \beta - \beta| = O_s\left(\frac{N}{\|\bm A\|_F^2}\sqrt{\frac{\log(d+1)}{N}}\right) , $$ with probability $1-o(1)$,  whenever $N, d\rightarrow \infty$ such that $\log d = o(\|\bm A\|_F^4/N)$. Therefore, by \eqref{eq:vS}, $\|\hat{\bg}-\bg\|_1 = O_s\left(\frac{N}{\|\bm A\|_F^2}\sqrt{\frac{\log(d+1)}{N}}\right)$ with probability $1-o(1)$. 
		%The proof of Lemma \ref{lm:L1} is now complete. 
	\end{proof} 
	
	Using Lemma \ref{mineiglem} and Lemma \ref{lm:L1} in \eqref{eq:hessianLN} it follows that, there exists $\kappa $ (as the statement of Lemma \ref{rsccond}) such that 
	\begin{align*}%\label{eq:hessianLN}
		L_N(\hat{\bg}) - L_N(\bg) - \nabla L_N(\bg) ^\top (\hat{\bg}-\bg) 
		& \ge \frac{(\hat{\bg}-\bg)^\top \bm G (\hat{\bg}-\bg)}{2\cosh^2(|\beta| + (M+1) \|\hat{\bg}-\bg\|_1 + s M \Theta )} \nonumber \\
		& \gtrsim_{\beta, M, \kappa} \frac{\|\bm A\|_F^2\norm{ \hat{\bg}-\bg }_2^2}{N}
	\end{align*}
	with probability $1-o(1)$, as $N, d\rightarrow \infty$ such that $d = o(\|\bm A\|_F^2)$ and $\log d = o(\|\bm A\|_F^4/N)$. This completes the proof of Lemma \ref{rsccond}.

	\begin{rem} \label{remark:M} \em{Note that if one assumes $|| \bm Z_i ||_2 \leq M$, for all $1 \leq i \leq N$, and $|| \bh ||_2 \leq \Theta$ (recall the discussion in Remark \ref{remark:sdependence}) then, for $\underline {\bh}$ as in \eqref{eq:Theta}, by the Cauchy-Schwarz inequality,  
			\begin{align*}%\label{eq:ThetaL2}
				|\underline{\bh}^\top \bm Z_i| 
				\leq \|\underline{\bh}\|_2 \|\bm Z_i\|_2 \leq M\left(\|\underline{\bh}-\bh\|_2 + \|\bh\|_2\right) \leq M(\|\hat{\bg}-\bg\|_1 + \Theta) .
			\end{align*} 
			Using this bound and \eqref{eq:B} we get, 
			\begin{equation*}%\label{nndL2}
				\frac{1}{2N}\sum_{i=1}^N \frac{(\hat{\bg}-\bg)^\top \bm U_i \bm U_i^\top (\hat{\bg}-\bg)}{\cosh^2(\underline{\beta} m_i(\bs) + \underline{\bh}^\top \bm Z_i)} \ge  \frac{(\hat{\bg}-\bg)^\top \bm G (\hat{\bg}-\bg) }{2\cosh^2(|\beta|  + (M+1)(\|\hat{\bg}-\bg\|_1) + M \Theta)}. 
			\end{equation*}
			Note that the bound in the RHS above does not have any dependence on $s$ in the $\mathrm{cosh}$ term (unlike in \eqref{eq:nnd}). Hence, by the same arguments as before we can now get the following rate where the dependence on $s$ matches that in the classical high-dimensional logistic regression: 
			\begin{align*}%\label{eq:theta}
				\|\hat{\bh} - \bh \|_2 = O\left(\sqrt{\frac{s \log d}{N}}\right),
			\end{align*} 
			with probability $1-o(1)$, as $N, d\rightarrow \infty$ such that $d=o(N)$. }
	\end{rem}

	\subsubsection{Proof of Lemma \ref{mineiglem}}\label{sec:mineigenpf}
	
	The first step towards proving Lemma \ref{mineiglem} is to observe that:
	$$\det(\bm G-\lambda \bm I) = \left(\frac{1}{N}\|\mf \bm m\|_2^2 - \lambda\right) \cdot \det\left(\frac{1}{N} \bm Z^\top \bm Z - \lambda I\right),$$
	where $\bm F := \bm I - \bm Z (\bm Z^\top \bm Z)^{-1} \bm Z^\top$.
	Hence, 
	\begin{equation*}%\label{crucial}
		\lambda_{\min}(\bm G) = \min \left\{\lambda_{\min}\left(\frac{1}{N} \bm Z^\top \bm Z\right),~\frac{1}{N}\|\mf \bm m\|_2^2\right\}.
	\end{equation*}
	In view of Assumption \ref{asm:a2}, to prove Lemma \ref{mineiglem} it suffices to show that there exists a constant $C>0$ (depending only on $s,\Theta$ and $M$), such that
	\begin{equation}\label{fm}
	\p\left(\frac{1}{N}\|\mf \bm m\|_2^2 \ge \frac{C\|\bm A\|_F^2}{N}\right) = 1-e^{-\Omega(\|\bm A\|_F^4/N)}. 
	\end{equation} 
	To this end, it suffices to prove the following conditional version of \eqref{fm}:
	\begin{equation}\label{fmm}
		\p\left(\frac{1}{N}\|\mf \bm m\|_2^2 \ge \frac{C\|\bm A\|_F^2}{N}\Big|\bm X_{J^c}\right) = 1-e^{-\Omega(\|\bm A\|_F^4/N)}
	\end{equation} 
	where $J$ is a suitably chosen subset of $[N]$ and $\bm X_{J^c}:= (X_i)_{i\in J^c}$. To this end, note that \eqref{eq:logisticmodel} is a $(|\beta|\|\bm A\|_\infty, \Upsilon)$-Ising model (Lemma \ref{eq:RU}). Now, applying Lemma \ref{combinatorial1} with 
	$$\eta := \min\left\{\frac{1}{16},~|\beta|\|\bm A\|_\infty\right\},$$
	gives subsets $I_1,\ldots,I_\ell\subseteq [N]$ such that, for all $1 \le j \le \ell$, the conditional distribution of $\bm X_{I_j}$ given $\bm X_{I_j^c} := (X_u)_{u\in [N]\setminus I_j}$ is an $(\eta,\Upsilon)$-Ising model. Furthermore, for any vector $\bm a$, there exists $j \in [\ell]$ such that 
	\begin{align}\label{eq:aL1}
		\|\bm a_{I_j}\|_1 \ge \frac{\eta}{8|\beta| \|\bm A\|_\infty} \|\bm a\|_1. 
	\end{align}   
	The proof of \eqref{fmm} now proceeds in the following two steps: First, we show that 
	there exists $j \in [\ell]$ such that the expectation of $N^{-1} \|\bm F \bm m\|_2^2$ conditioned on $\bm X_{I_j}$ is $\Omega(1)$.  Subsequently, we establish that conditioned on $\bm X_{I_j}$, $N^{-1} \|\bm F \bm m\|_2^2$ concentrates around its conditional expectation. These steps are verified in Lemmas \ref{meanlb} and \ref{concmeanlb}, respectively.

	\begin{lemma}\label{meanlb} Under the assumptions of Theorem \ref{thm:estimate}, there exists $J \in \{I_1, I_2, \ldots, I_\ell\}$ such that  for all $N \ge 1$, 
		$$\mathbb{E}\left(\frac{1}{N}\|\bm F \bm m\|_2^2\Big| \bm X_{J^c}\right)\ge  \frac{C\|\bm A\|_F^2}{N},$$
		where $C > 0$ is a constant depending only on $\Theta, M$ and $s$. 
	\end{lemma}
	
	\begin{proof}
		For any $(d+1) \times n$ dimensional matrix $\bm M$, we will denote the $i$-th row of $\bm M$ by $\bm M_i$ and  the $i$-th largest singular value of $\bm M$ by $\sigma_i(\bm M)$, for $1 \leq i \leq d+1$. Also, for $J \subseteq [N]$ denote $(\bm F\bm A)_{i,J} := ((\bm F \bm A)_{i,j})_{j \in J}$. Note that for any $J \in \{I_1, I_2, \ldots, I_\ell\} \subset [N]$, since $\bm m = \bm A \bm X$,  
		\begin{align}\label{fs1}
			\e \left(\|\bm F \bm m\|_2^2\Big| \bm X_{J^c}\right)  = \sum_{i=1}^N \e \left([(\bm F \bm A)_i \bm X]^2\Big| \bm X_{J^c}\right) & \ge \sum_{i=1}^N \mathrm{Var}\left((\bm F \bm A)_i \bm X\Big| \bm X_{J^c}\right) \nonumber \\ 
			& = \sum_{i=1}^N \mathrm{Var}\left((\bm F \bm A)_{i,J} \bm X_J\Big| \bm X_{J^c}\right)  \nonumber \\ 
			& \gtrsim \frac{\Upsilon^2}{\eta} \sum_{i=1}^N \|(\bm F\bm A)_{i, J}\|_2^2 ,
		\end{align} 
		where the last step follows from Lemma \ref{lemma10}. 	
		Now define a vector $\bm a=(a_1, a_2, \ldots, a_N)^{\top}$, with $a_i = \|(\bm F\bm A)_{\cdot, i}\|_2^2$, where $(\bm F\bm A)_{\cdot, i}$ denotes the $i$-th column of the matrix $\bm F \bm A$. Then by \eqref{eq:aL1} there exists $J \in \{I_1, I_2, \ldots, I_\ell\} \subseteq [N]$ such that 
		$$\sum_{i=1}^N \|(\bm F\bm A)_{i, J}\|_2^2 \geq \frac{\eta}{8|\beta| \|\bm A\|_\infty} \sum_{i=1}^N  \|(\bm F\bm A)_{\cdot, i}\|_2^2. $$ 
		Therefore, by \eqref{fs1}, 
		\begin{equation}\label{fs2}
			\e \left(\|\bm F \bm m\|_2^2\Big| \bm X_{J^c}\right)  \gtrsim \frac{\Upsilon^2}{|\beta| \|\bm A\|_\infty} \|\bm F \bm A\|_F^2 \gtrsim_{s, B, M}  \|\bm F \bm A\|_F^2 .
		\end{equation} 
		By Theorem 2 in \citet{wangfr}, we get
		\begin{equation}\label{wangeq}
			\sum_{t=1}^N \sigma_t^2(\bm F\bm A) \ge \sum_{t=1}^N \sigma_t^2(\bm F)~ \sigma_{N-t+1}^2(\bm A)
		\end{equation}
		Since $\bm F$ is idempotent with trace $N-d$, it follows that $\sigma_1(\bm F)=\ldots =\sigma_{N-d}(\bm F) = 1$ and $\sigma_{N-d+1}(\bm F)=\ldots =\sigma_{N}(\bm F) = 0$. Hence, we have from \eqref{wangeq},
		\begin{equation}\label{wangeq2}
			\|\bm F \bm A\|_F^2 = \sum_{t=1}^N \sigma_t^2(\bm F\bm A) \ge \sum_{t=1}^{N-d} \sigma_{N-t+1}^2(\bm A) = \|\bm A\|_F^2 - \sum_{i=1}^d \sigma_i^2(\bm A).
		\end{equation} 
		where the last step uses $\sigma_i^2(\bm A) \le 1$ for all $1\le i\le N$, since $\|\bm A\|_2 \leq \|\bm A\|_\infty \le 1$. Applying the bound in \eqref{wangeq2} to \eqref{fs2} gives, 
		\begin{equation}\label{fs22}
			\e \left(\|\bm F \bm m\|_2^2\Big| \bm X_{J^c}\right)  \gtrsim   \Upsilon^2 \left(\|\bm A\|_F^2 - d\right).
		\end{equation} 
		Lemma \ref{meanlb} now follows from the hypothesis $d=o(\|\bm A\|_F^2)$. 
	\end{proof}
	
	Next, we show that $\|\bm F \bm m\|_2^2$ concentrates around its conditional expectation $\mathbb{E}(\|\bm F \bm m\|_2^2 |\bm X_{J^c})$, for the set $J$ as defined above.

	\begin{lemma}\label{concmeanlb} For any $t > 0$ and $J \in \{I_1, I_2, \ldots, I_\ell\}$, 
		\begin{align}\label{eq:concmean}	
			& \bp \left(\|\bm F \bm m\|_2^2 < \mathbb{E}(\|\bm F \bm m\|_2^2 |\bm X_{J^c}) - t\Big| \bm X_{J^c} \right) \nonumber \\ 
			& \le 	2\exp\left(-C \cdot \min\left\{  \frac{t^2}{8N},~\frac{t}{2}\right\}\right) + 2 \exp\left(-\frac{t^2}{128 N}\right),
		\end{align} 
		where $C$ is a constant depending only on $\Theta,M$ and $s$.
	\end{lemma}
	\begin{proof} Denote $\bm W:= \bm F \bm A$ and let $\bm H$ be the matrix obtained from $\bm W^\top \bm W$ by zeroing out all its diagonal elements. Moreover, throughout the proof we will denote $I:=J^c$. Clearly,
		$$\|\bm F \bm m\|_2^2 - \mathbb{E}(\|\bm F \bm m\|_2^2 |\bm X_I) = \bm X^\top \bm H \bm X - \mathbb{E}\left(\bm X^\top \bm H \bm X | \bm X_I\right).$$
		By permuting the indices let us partition the vector $\bm X$ as $(\bm X_I^\top, \bm X_J^\top)^\top$ and the matrix $\bm H$ as: 
		\begin{equation*}
			\left(
			\begin{array}{cc}
				\bm H_{I,I} & \bm H_{I,J}\\
				\bm H_{I,J}^\top & \bm H_{J,J}
			\end{array}
			\right)
		\end{equation*}
		where for two subsets $A, B$ of $[N]$, we define $\bm H_{A,B} := ((H_{ij}))_{i \in A, j \in B}$. Note that 
		\begin{align}\label{depth1}
			&\bm X^\top \bm H \bm X - \mathbb{E}\left(\bm X^\top \bm H \bm X | \bm X_I\right)\nonumber\\ 
			&= \bm X_J^\top \bm H_{J,J} \bm X_J - \mathbb{E}\left(\bm X_J^\top \bm H_{J,J} \bm X_J | \bm X_I\right) + 2 \bm X_I^\top \bm H_{I,J} \bm X_J - 2 \mathbb{E}\left(\bm X_I^\top \bm H_{I,J} \bm X_J | \bm X_I\right). 
		\end{align}
		Since $\bm X|\bm X_I$ is an $(\eta, \Upsilon)$-Ising model, Example 2.5 in \citet{radek} implies (by taking the parameters $\alpha$ and $\rho$ in \citet{radek} to be $\Theta M s + 1/16$ and $7/8$, respectively), 
		\begin{align}\label{radek1}
			&\bp\left(\bm X_J^\top \bm H_{J,J} \bm X_J < \mathbb{E}\left(\bm X_J^\top \bm H_{J,J} \bm X_J | \bm X_I\right) - t\Big| \bm X_I\right)\nonumber\\ 
			&\le 2\exp \left(-c \cdot \min\left\{\frac{t^2}{\|\bm H_{J,J}\|_F^2 + \|\e (\bm H_{J,J} \bm X_J)\|_2^2},~\frac{t}{\|\bm H_{J,J}\|_2} \right\}\right), 
		\end{align}
		where $c$ is a constant depending only on $\Theta,M$ and $s$. Clearly, one has $\|\bm H_{J,J}\|_F^2 \le \|\bm H\|_F^2$ and, since the spectral norm of a matrix is always greater than or equal to the spectral norm of any of its submatrices, $\|\bm H_{J,J}\|_2 \le \|\bm H\|_2$. Moreover, if $$\bm X_0^\top := (\boldsymbol{0}^\top, \bm X_J^\top),$$ then $\bm H_{J,J} \bm X_J$ is a subvector of $\bm H \bm X_0$, and hence, 
		$$\|\e (\bm H_{J,J} \bm X_J)\|_2^2 \le \|\e(\bm H \bm X_0)\|_2^2.$$ Combining all these, we have from \eqref{radek1}
		\begin{align}\label{sstep} 
			& \bp\left(\bm X_J^\top \bm H_{J,J} \bm X_J < \mathbb{E}\left(\bm X_J^\top \bm H_{J,J} \bm X_J | \bm X_I\right) - t\Big| \bm X_I\right)\nonumber\\ 
			& \le 2\exp \left(-c \cdot \min\left\{\frac{t^2}{\|\bm H\|_F^2 + \|\e (\bm H \bm X_0)\|_2^2},~\frac{t}{\|\bm H\|_2} \right\}\right) 
		\end{align}
		Next, note that, since $\bm H - \bm W^\top \bm W$ is a diagonal matrix, 
		\begin{align}
			\|\bm H\|_F^2 + \|\e (\bm H\bm X_0)\|_2^2 & \le \|\bm H\|_F^2 + \left(\|\e (\bm W^\top \bm W \bm X_0)\|_2 + \|\e [(\bm H - \bm W^\top \bm W)\bm X_0]\|_2\right)^2 \nonumber \\ 
			&\le 2\|\bm H\|_F^2 + 2 \e \|[(\bm H - \bm W^\top \bm W)\bm X_0]\|_2^2 + 2\|\e (\bm W^\top \bm W \bm X_0)\|_2^2\nonumber\\
			&= 2\sum_{1 \leq i\ne j \leq N} (\bm W^\top \bm W)_{ij}^2 + 2 \sum_{i=1}^N (\bm W^\top \bm W)_{ii}^2 + 2\|\e (\bm W^\top \bm W \bm X_0)\|_2^2\nonumber\\\label{eqar1} 
			&= 2\|\bm W^\top \bm W\|_F^2 + 2\|\e (\bm W^\top \bm W\bm X_0)\|_2^2.
		\end{align}
		We also have $\|\bm H\|_2 \le \|\bm W^\top \bm W\|_2$, since for any vector $\bm a \in \mathbb{R}^N$,  
		$$\bm a^\top \bm H\bm a = \bm a^\top \bm W^\top \bm W\bm a - \sum_{i=1}^N (\bm W^\top \bm W)_{ii} a_i^2 \le \bm a^\top \bm W^\top \bm W \bm a.$$ 
		Hence, it follows from \eqref{sstep} and \eqref{eqar1} that 
		\begin{align}\label{radek2}
			&\bp\left(\bm X_J^\top \bm H_{J,J} \bm X_J < \mathbb{E}\left(\bm X_J^\top \bm H_{J,J} \bm X_J | \bm X_I\right) - t\Big| \bm X_I\right)\nonumber\\ 
			&\le 2\exp \left(-c' \cdot \min\left\{\frac{t^2}{\|\bm W^\top \bm W\|_F^2 + \|\e (\bm W^\top \bm W \bm X_0)\|_2^2},~\frac{t}{\|\bm W^\top \bm W\|_2} \right\}\right)
		\end{align}
		where $c' := c/2$.  Next, recall that for any two matrices $\bm U$ and $(\hat{\bg}-\bg)$ such that $\bm U (\hat{\bg}-\bg)$ exists, we have $\|\bm U (\hat{\bg}-\bg)\|_F \le \|\bm U\|_2 ~\|\hat{\bg}-\bg\|_F$. This implies,  
		\begin{equation}\label{rst1}
			\|\bm W^\top \bm W\|_F^2 \le \|\bm W\|_2^2~\|\bm W\|_F^2,
		\end{equation}
		Moreover, by the submultiplicativity of the matrix $\ell_2$ norm, 
		\begin{equation}\label{rst2}
			\|\bm W^\top \bm W\|_2 \le \|\bm W^\top\|_2 \|\bm W\|_2 = \|\bm W\|_2^2
		\end{equation}
		and 
		\begin{equation}\label{rst3}
			\|\e (\bm W^\top \bm W \bm X_0)\|_2^2 = \|\bm W^\top \e(\bm W\bm X_0)\|_2^2 \le \|\bm W\|_2^2\cdot \|\e(\bm W\bm X_0)\|_2^2.
		\end{equation}
		It follows from \eqref{radek2}, \eqref{rst1}, \eqref{rst2} and \eqref{rst3}, that:
		\begin{align}\label{prelem}
			& \bp\left(\bm X_J^\top \bm H_{J,J} \bm X_J < \mathbb{E}\left(\bm X_J^\top \bm H_{J,J} \bm X_J | \bm X_I\right) - t\Big| \bm X_I\right)\nonumber\\ 
			& \le 2\exp\left(-\frac{c'}{\|\bm F \bm A\|_2^2}\cdot \min\left\{  \frac{t^2}{\|\bm F \bm A\|_F^2 + \|\mathbb{E}\left[\bm W \bm X_0\right] \|_2^2},~t\right\}\right).
		\end{align}
		Since $\|\bm F \bm A\|_2^2 \le \|\bm F\|_2^2 \|\bm A\|_2^2 \le 1$, $\|\bm F \bm A\|_F^2 \le N\|\bm F \bm A\|_2^2 \le N$, and $\|\mathbb{E}\left[\bm W \bm X_0\right] \|_2^2 \le \e \|\bm W \bm X_0\|_2^2 \le \|\bm F \bm A\|_2^2 ~\e\|\bm X_0\|_2^2 \le N$, we can conclude from \eqref{prelem} that:
		\begin{equation}\label{prelem2}
			\bp\left(\bm X_J^\top \bm H_{J,J} \bm X_J < \mathbb{E}\left(\bm X_J^\top \bm H_{J,J} \bm X_J | \bm X_I\right) - t\Big| \bm X_I\right)\le 2\exp\left(-c'\cdot \min\left\{  \frac{t^2}{2N},~t\right\}\right).
		\end{equation}
		
		Next, let us define $\bm y := \bm H_{I,J}^\top \bm X_I$. Then,
		$$\bm X_I^\top \bm H_{I,J} \bm X_J - \mathbb{E}\left(\bm X_I^\top \bm H_{I,J} \bm X_J | \bm X_I\right) = \bm y^\top \bm X_J - \e(\bm y^\top \bm X_J|\bm X_I).$$
		By Lemma 4.4 in \citet{scthesis}, Dobrushin's interdependence matrix for the model $\bm X_J |\bm X_I$ is given by $8\bm D$, where $\bm D$ denotes the interaction matrix for the Ising model $\bm X_J |\bm X_I$. Since $\|8 \bm D\|_2 \le \frac{1}{2}$, by Theorem 4.3 in \citet{scthesis}, 
		\begin{equation}\label{lin1}
			\p\left(\bm y^\top \bm X_J < \e(\bm y^\top \bm X_J|\bm X_I) - t ~\Big| \bm X_I \right) \le 2 \exp\left(-\frac{t^2}{8\|\bm y\|_2^2}\right).
		\end{equation} 
		Using the submultiplicativity of the matrix $\ell_2$ norm and the fact that the spectral norm of a matrix is always greater than or equal to the spectral norm of any of its submatrices gives, 
		\begin{equation}\label{normv}
			\|\bm y\|_2^2 = \|\bm H_{I,J}^\top \bm X_I\|_2^2 \le \|\bm H_{I,J}^\top\|_2^2 \cdot \|\bm X_I\|_2^2 \le N \|\bm H\|_2^2 \le N \|\bm W\|_2^4 = N\|\bm F \bm A\|_2^4 \le N.
		\end{equation} 
		Combining \eqref{lin1} and \eqref{normv}, 
		\begin{equation}\label{linear}
			\p\left(\bm y^\top \bm X_J < \e(\bm y^\top \bm X_J|\bm X_I) - t ~\Big| \bm X_I\right) \le 2 \exp\left(-\frac{t^2}{8N}\right).
		\end{equation}
		Finally, combining \eqref{prelem2}, \eqref{linear}, and \eqref{depth1} gives, 
		\begin{align}
			& \bp\left(\bm X^\top \bm H \bm X < \mathbb{E}\left(\bm X^\top \bm H \bm X | \bm X_I\right) - t~\Big| \bm X_I\right) \nonumber \\ 
			%&\le& \bp\left(\bm X_J^\top \bm H_{J,J} \bm X_J < \mathbb{E}\left(\bm X_J^\top \bm H_{J,J} \bm X_J | \bm X_I\right) - \frac{t}{2}~\Big| \bm X_I\right)\nonumber\\ &+& \p\left(2(\hat{\bg}-\bg)^\top \bm X_J < 2\e((\hat{\bg}-\bg)^\top \bm X_J|\bm X_I) - \frac{t}{2} ~\Big| \bm X_I\right)\nonumber\\
			&\le
			2\exp\left(-c'\cdot \min\left\{  \frac{t^2}{8N},~\frac{t}{2}\right\}\right) + 2 \exp\left(-\frac{t^2}{128 N}\right). \nonumber 
		\end{align}
		This completes the proof of Lemma \ref{concmeanlb}. 
	\end{proof}
	
	To complete the proof of \eqref{fmm}, we choose $J$ as in Lemma \ref{meanlb} and apply Lemma \ref{concmeanlb} with $t= \frac{1}{2} \e(\|\bm F \bm m\|_2^2 |\bm X_{J^c})$. This implies, there exists a constant $C$, depending only on $\Theta,M$ and $s$, such that 
	$$\p\left(\frac{1}{N}\|\mf \bm m\|_2^2 \ge \frac{C\|\bm A\|_F^2}{N}\Big|\bm X_{J^c}\right) = 1-e^{-\Omega(\|\bm A\|_F^4/N)}.$$
	This proves \eqref{fmm} and completes the proof of Lemma \ref{mineiglem}.  \hfill $\Box$

	\section{Proof of Theorem \ref{thm:stheta}} 
	\label{sec:sthetapf}
	
	Recall the definition of the log-pseudo-likelihood function $L_{\beta, N}(\cdot)$ from \eqref{eq:LNtheta}. As before, the proof of Theorem \ref{thm:estimate} entails showing the following: (1) concentration of the gradient of $L_{\beta, N}$ and (2) restricted strong concavity of  $L_{\beta, N}$.  
	
	The concentration of the gradient $\nabla L_{\beta, N}$ follows by arguments similar to Lemma \ref{lm:gradient}. Towards this, recall the definition of the functions $\phi_{i, s}: \cC_N \rightarrow \R$,  for $1 \leq i \leq N$ and $1 \leq s \leq d$, from \eqref{eq:function2}. Note that $\nabla L_{\beta, N} = (\frac{\partial L_{\beta, N}}{\partial \theta_1}, \ldots, \frac{\partial L_{\beta, N}}{\partial \theta_d})^\top$,  where $\frac{\partial L_{\beta,N}}{\partial \theta_s} = \sum_{i=1}^N \phi_{i, s}(\bm X)$ for $1 \leq s \leq d$. For $k  \in [N]$, recall from Lemma \ref{partialbetalips} the definition of 
	$$c_k := \frac{2|Z_{k,s}|}{N} + \frac{2|\beta|}{N}\sum_{i=1}^N \left|  Z_{i,s} a_{ik}\right|, $$  
	where $s \in [d]$. Therefore, for $s \in [d]$, 
	\begin{align*}
		\sum_{k=1}^N c_k^2 &\lesssim \frac{1}{N^2}\sum_{k=1}^N Z_{k,s}^2 + \frac{\beta^2}{N^2} \sum_{k=1}^N \left(\sum_{i=1}^N |Z_{i,s}a_{ik}|\right)^2\\ 
		&\le \frac{1}{N}\max_{1\le s \le d} \frac{1}{N} \sum_{k=1}^N Z_{k, s}^2 + \frac{\beta^2}{N^2} \sum_{k=1}^N \left(\sum_{i=1}^N Z_{i,s}^2 |a_{ik}| \sum_{i=1}^N |a_{ik}| \right) \tag*{(by the Cauchy-Schwarz inequality)} \\ 
		&\le \frac{1}{N}\max_{1\le s \le d} \frac{1}{N} \sum_{k=1}^N Z_{k, s}^2  + \left(\max_{1\le k\le N} \sum_{i=1}^N |a_{ik}|\right)\frac{\beta^2}{N^2} \sum_{i=1}^N Z_{i, s}^2\sum_{k=1}^N|a_{ik}| \\ 
		&\le \frac{1}{N} \left\{ \max_{1\le s \le d} \frac{1}{N} \sum_{k=1}^N Z_{k, s}^2  + \beta^2 \|\bm A\|_1^2 \max_{1\le s \le d} \frac{1}{N} \sum_{i=1}^N Z_{i, s}^2 \right \} 
		\\
		& \lesssim_{\beta, C} \frac{1}{N}, 
	\end{align*} 
	where the last holds with probability 1 under Assumptions \ref{asm:a1} and \ref{asm:RSC_random}. Then by analogous arguments as in Lemma \ref{lm:gradient} it follows that there exists  $\delta > 0$ such that with  $\lambda := \delta \sqrt{\log d/N}$  the following holds: 
	\begin{align}\label{eq:sthetagradient}
		\bp\left(\norm{\nabla L_{\beta, N}(\bh)}_\infty > \frac{\lambda}{2}\right) = o(1), 
	\end{align} 
	where the $o(1)$-term goes to infinity as $d \rightarrow \infty$.

	To establish the strong concavity of $L_{\beta, N}$ we consider the second-order Taylor expansion: For any $\bm \eta \in \R^d$, 
	\begin{align*}
		L_{\beta,N}(\tht + \bm \eta)-L_{\beta,N}(\tht)-\nabla L_{\beta,N}(\tht)^{\top} \bm \eta=\frac{1}{2}\bm \eta^{\top}\nabla^{2}L_{\beta,N}(\tht+t \bm \eta) \bm \eta , 
	\end{align*}
	for some $t \in (0,1)$. Computing the Hessian matrix $\nabla^{2}L_{\beta,N}(\tht+t \bm \eta)$ gives, 
	\begin{align} 
		L_{\beta,N}(\tht + \bm \eta)  -L_{\beta,N}(\tht) & - \nabla L_{\beta,N}(\tht)^{\top} \bm \eta \nonumber \\ 
		&
		=\frac{1}{2N}\sum_{i=1}^{N}\frac{\bm \eta^{\top}\Z_i\Z_i^{\top}\bm \eta}{\cosh^{2}(\beta m_i(\X)+(\tht+t\bm \eta)^{\top}\Z_i)} \nonumber\\
		&
		\geq \frac{1}{2N}\sum_{i=1}^{N}\frac{\bm \eta^{\top}\Z_i\Z_i^{\top}\bm \eta}{\cosh^{2}(|\beta|\|\A\|_{\infty}+(\tht+t\bm \eta)^{\top}\Z_i)} \tag*{(using $|\beta m_i(\bs)| \leq |\beta| ||\A||_\infty)$ } \nonumber \\
		&
		=\frac{1}{2N}\sum_{i=1}^{N}\langle \bm \eta, \Z_i\rangle^{2}\frac{1}{\cosh^{2}(|\beta|\|\A\|_{\infty}+(\tht+t\bm \eta)^{\top}\Z_i)} \nonumber \\
		%&    :=\frac{1}{2N}\sum_{i=1}^{N}\langle \bm \eta, \Z_i\rangle^{2}\psi''(\langle \tht,\Z_i\rangle+ t\langle \bm \eta,\Z_i\rangle)\\
		&
		=\frac{1}{2N}\sum_{i=1}^{N}\langle \bm \eta, \Z_i\rangle^{2}\psi(\langle \tht,\Z_i\rangle+ t\langle \bm \eta,\Z_i\rangle) , \label{eq:LN2}
	\end{align}
	where $\psi(x): =\mathrm{sech}^{2}(|\beta|\|\A\|_{\infty}+x)$.

	\begin{proposition}\label{proposition:RSC_random} Suppose the assumptions of 
		Theorem \ref{thm:stheta} hold. Then there exists positive constants $\nu, c_0$ (depending on $\kappa_1, \kappa_2$, $\beta$, and $\Theta$) such that for all $t \in (0, 1)$, 
		\begin{align*}
			\frac{1}{N}\sum_{i=1}^{N}\langle \bm \eta, \Z_i\rangle^{2}\psi(\langle \tht,\Z_i\rangle+ t\langle \bm \eta,\Z_i\rangle)\geq \nu \|\bm \eta\|_2^{2}-c_0 \mathcal R_N \|\bm \eta\|_1^{2}, 
		\end{align*}
		with probability at least $1 - o(1)$, for all $\bm \eta\in\mathbb{R}^{d}$  with $\|\bm \eta\|_2\leq 1$. 
	\end{proposition}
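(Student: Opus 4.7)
The proof follows the standard restricted strong convexity playbook for high-dimensional GLMs (cf.\ \cite[Chapter 9]{wainwrighthighdimensional}): pointwise truncate the curvature factor $\psi$, then apply entrywise concentration of the empirical Gram matrix $\hat\Sigma:=N^{-1}\Z^\top\Z$ around its population counterpart $\Sigma:=\E[\Z_1\Z_1^\top]$. The $\|\bm \eta\|_1^2$ structure in the statement arises naturally because $|\bm \eta^\top (\hat\Sigma-\Sigma)\bm \eta|\le\|\hat\Sigma-\Sigma\|_\infty\,\|\bm \eta\|_1^2$ by H\"older's inequality, i.e., it is the canonical form of the error when the Gram matrix is controlled in the entrywise max norm.

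\emph{Truncation.} Fix a threshold $T>0$ to be chosen. Since $\psi(x)=\mathrm{sech}^2(|\beta|\|\A\|_\infty+x)$ is positive, even, and monotonically decreasing in $|x|$, and since $t\in(0,1)$ yields $|t\langle\bm \eta,\Z_i\rangle|\le|\langle\bm \eta,\Z_i\rangle|$, we obtain pointwise
$$\psi(\langle\tht,\Z_i\rangle+t\langle\bm \eta,\Z_i\rangle)\;\ge\;\psi_T\,\mathbf 1\{|\langle\tht,\Z_i\rangle|\le T\}\,\mathbf 1\{|\langle\bm \eta,\Z_i\rangle|\le T\},\qquad \psi_T:=\mathrm{sech}^2(|\beta|\|\A\|_\infty+2T)>0.$$
Splitting the product indicator through a union bound on the complement gives
$$\frac{1}{N}\sum_{i=1}^N\langle\bm \eta,\Z_i\rangle^2\psi(\cdots)\;\ge\;\psi_T\Bigl(\bm \eta^\top\hat\Sigma\bm \eta - R_1(\bm \eta) - R_2(\bm \eta)\Bigr),$$
where $R_1(\bm \eta):=\tfrac1N\sum_i\langle\bm \eta,\Z_i\rangle^2\mathbf 1\{|\langle\tht,\Z_i\rangle|>T\}$ and $R_2(\bm \eta):=\tfrac1N\sum_i\langle\bm \eta,\Z_i\rangle^2\mathbf 1\{|\langle\bm \eta,\Z_i\rangle|>T\}$.

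\emph{Quadratic form.} Assumption \ref{asm:RSC_random}(1) gives $\bm \eta^\top\Sigma\bm \eta\ge\kappa_1\|\bm \eta\|_2^2$. A standard Bernstein plus union bound argument over the $d^2$ entries of $\hat\Sigma-\Sigma$, using the bounded fourth moment (Assumption \ref{asm:RSC_random}(1) applied to standard basis directions) and the envelope $\max_j N^{-1}\sum_iZ_{ij}^2\le C$ of Assumption \ref{asm:RSC_random}(3), yields $\|\hat\Sigma-\Sigma\|_\infty\lesssim\sqrt{\log d/N}\lesssim\mathcal R_N$ with probability $1-o(1)$. Combined with H\"older, $\bm \eta^\top\hat\Sigma\bm \eta\ge\kappa_1\|\bm \eta\|_2^2 - c\mathcal R_N\|\bm \eta\|_1^2$ uniformly over $\bm \eta$, which already has exactly the form required by the statement.

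\emph{Residuals.} Dominate indicators by Markov, $\mathbf 1\{|x|>T\}\le x^2/T^2$: then $R_1(\bm \eta)\le T^{-2}\bm \eta^\top\hat M\bm \eta$ with $\hat M:=N^{-1}\sum_i\langle\tht,\Z_i\rangle^2\Z_i\Z_i^\top$. Its population mean satisfies $\bm \eta^\top\E[\hat M]\bm \eta\le\kappa_2\Theta^2\|\bm \eta\|_2^2$ by Cauchy-Schwarz and Assumption \ref{asm:RSC_random}(1), and the same entrywise max-norm concentration as in the previous step applied to $\hat M$ gives $R_1(\bm \eta)\lesssim T^{-2}\bigl(\kappa_2\Theta^2\|\bm \eta\|_2^2+\mathcal R_N\|\bm \eta\|_1^2\bigr)$. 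Similarly, $R_2(\bm \eta)\le T^{-2}N^{-1}\sum_i\langle\bm \eta,\Z_i\rangle^4$ is a degree-four empirical form in $\bm \eta$ whose coefficient tensor concentrates in the entrywise max norm at rate $\mathcal R_N$, yielding an analogous decomposition into $\kappa_2\|\bm \eta\|_2^2/T^2$ plus a $\mathcal R_N\|\bm \eta\|_1^2/T^2$ remainder on the unit $\ell_2$-ball. Finally, choosing $T$ large enough so that $(\kappa_2\Theta^2+\kappa_2)/T^2\le\kappa_1/4$ absorbs the $\|\bm \eta\|_2^2$ pieces of the residual bounds into $\nu:=\psi_T\kappa_1/2$, while the $\mathcal R_N\|\bm \eta\|_1^2$ pieces merge into $c_0$. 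The main technical hurdle is the uniform control of $R_2$: since the event $\{|\langle\bm \eta,\Z_i\rangle|>T\}$ depends discontinuously on $\bm \eta$, direct symmetrization plus Ledoux-Talagrand contraction on the indicator fails; the plan is to route through the degree-four Gram tensor (whose entrywise concentration preserves the $\|\bm \eta\|_1^2$ scaling) rather than attempt to Lipschitz-smooth the indicator directly.
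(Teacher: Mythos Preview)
Your overall scaffold (truncate, then control a quadratic form plus residuals) is reasonable, but two of the three main steps do not go through under the paper's Assumption \ref{asm:RSC_random}.

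\textbf{Entrywise Bernstein is not available.} Assumption \ref{asm:RSC_random} gives only bounded fourth moments of linear projections, the Rademacher bound $\mathcal R_N=O(\sqrt{\log d/N})$, and the almost-sure envelope $\max_j N^{-1}\sum_i Z_{ij}^2\le C$. None of these supplies the sub-exponential tails on $Z_{ij}Z_{ik}$ (or on $\langle\tht,\Z_i\rangle^2 Z_{ij}Z_{ik}$) needed to run Bernstein plus a union bound over $d^2$ entries at rate $\sqrt{\log d/N}$. With only fourth moments you get Chebyshev-type control of each entry, which does not survive the union bound. So the claimed $\|\hat\Sigma-\Sigma\|_\infty\lesssim\mathcal R_N$ and the analogous bound for $\hat M$ are not justified here.

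\textbf{The degree-four route gives the wrong power of $\|\bm\eta\|_1$.} For $R_2$ you bound $\mathbf 1\{|\langle\bm\eta,\Z_i\rangle|>T\}\le\langle\bm\eta,\Z_i\rangle^2/T^2$ and then appeal to entrywise concentration of the fourth-order tensor $\hat T:=N^{-1}\sum_i\Z_i^{\otimes 4}$. Even granting that concentration, H\"older gives $|\bm\eta^{\otimes 4}\cdot(\hat T-\E\hat T)|\le\|\hat T-\E\hat T\|_\infty\,\|\bm\eta\|_1^4$, not $\|\bm\eta\|_1^2$. On the unit $\ell_2$-ball, $\|\bm\eta\|_1$ can be as large as $\sqrt d$, so $\mathcal R_N\|\bm\eta\|_1^4$ is not dominated by $\mathcal R_N\|\bm\eta\|_1^2$ and the final inequality in the required form fails.

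The paper sidesteps both problems by never separating $\langle\bm\eta,\Z_i\rangle^2$ from its truncation. It replaces $u^2\mathbf 1\{|u|\le 2K\}$ by a $2K$-Lipschitz tent function $\bar\phi_K$ with $0\le\bar\phi_K\le K^2$, so the whole summand is bounded and functional Hoeffding applies with no tail assumptions. Symmetrization plus the Ledoux--Talagrand contraction principle then reduces the expected supremum over $\{\|\bm\eta\|_2=1,\ \|\bm\eta\|_1\le b\}$ to $8Kb\,\mathcal R_N$---linear in $b$, which after the homogeneity rescaling yields exactly the $\mathcal R_N\|\bm\eta\|_1^2$ term. Uniformity over all $\|\bm\eta\|_2\le 1$ is obtained by peeling over dyadic $\ell_1$-shells. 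The key point you identified (``direct symmetrization plus contraction on the indicator fails'') is precisely what the Lipschitz truncation $\bar\phi_K$ fixes, and that is the device your proposal is missing.
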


	The proof of Proposition \ref{proposition:RSC_random} is given in Section \ref{sec:ppnLN_pf}. Here we use it to complete the proof of Theorem \ref{thm:stheta}. Note that by \eqref{eq:LN2} and Proposition \ref{proposition:RSC_random}, for any $\bm \eta \in \mathbb R^d$ with $\| \bm \eta \|_2 \leq 1$, 
	\begin{align*}%\label{eq:RSCLN} 
		L_{\beta,N}(\tht + \bm \eta)  -L_{\beta,N}(\tht) & - \nabla L_{\beta,N}(\tht)^{\top} \bm \eta 
		\gtrsim \nu \|\bm \eta\|_2^{2}- c_0 \mathcal R_N \|\bm \eta\|_1^{2}
	\end{align*}
	This establishes the restricted strong convexity (RSC) property for pseudo-likelihood loss function $L_{\beta,N}(\cdot)$. Therefore, by \citet[Corollary 9.20]{wainwrighthighdimensional}, whenever $\mathcal R_N s = s \sqrt{\log d /n} = o(1)$, then when the event $\{\|\nabla L_N(\hat{\tht})\|_{\infty}\leq\frac{\lambda}{2}\}$ happens, 
	\begin{align}\label{eq:theta12}
		\|\hat{\tht}-\tht\|^{2}_2 \lesssim_\nu s\lambda^2 \lesssim_{\nu, \delta} \frac{s \log d}{n} \quad \text{ and } \quad  \|\hat{\tht}-\tht\|_1 \lesssim_\nu s\lambda \lesssim_{\nu, \delta} s \sqrt{\frac{\log d}{n}}. 
	\end{align}
	Since the event $\{\|\nabla L_N(\hat{\tht})\|_{\infty}\leq\frac{\lambda}{2}\}$ happens with probability $1-o(1)$ by \eqref{eq:sthetagradient} (jointly over the randomness of the data  and the covariates), the bounds in \eqref{eq:theta12} hold with probability $1-o(1)$, which completes the proof of Theorem \ref{thm:stheta}.

	\subsection{Proof of Proposition \ref{proposition:RSC_random}}
	\label{sec:ppnLN_pf}
	
	Consider a fixed vector $\bm \eta\in\mathbb{R}^{d}$ with $\|\bm \eta\|_2=r \in(0,1]$ and set $L= L(r): =K r$, for a constant $K>0$ to be chosen. Since the function $\phi_{L}(u): =u^{2}I\{|u|\leq 2 L \} \leq u^2$ and $\psi$ is positive, 
	\begin{align*}
		\frac{1}{N}\sum_{i=1}^{N}\langle \bm \eta, \Z_i\rangle^{2}\psi(\langle \tht,\Z_i\rangle+ t\langle \bm \eta,\Z_i\rangle)\geq\frac{1}{N}\sum_{i=1}^{N}\psi(\langle \tht,\Z_i\rangle+ t\langle \bm \eta,\Z_i\rangle)\phi_{L}(\langle \bm \eta,\Z_i \rangle)\bm{1}\left\{|\langle \tht,\Z_i\rangle|\leq T\right\} , 
	\end{align*}
	where $T$ is second truncation parameter to be chosen. Note that on the events 
	$ \{ |\langle \bm \eta, \Z_i \rangle |\leq 2 L \}$  and $ \{ |\langle \bm \theta , \Z_i \rangle |\leq T \}$ one has, 
	$$|\langle \tht,\Z_i\rangle+t\langle \bm \eta,\Z_i \rangle| \leq T+ 2L \leq T+2K,$$ since $t, r \in [0, 1]$. This implies, 
	\begin{align}\label{eq:ppnpf1}
		\frac{1}{N} & \sum_{i=1}^{N}\langle \bm \eta, \Z_i\rangle^{2}\psi(\langle \tht,\Z_i\rangle+ t\langle \bm \eta,\Z_i\rangle) \nonumber \\ 
		& \geq  \frac{1}{N}\sum_{i=1}^{N}\psi(\langle \tht,\Z_i\rangle+ t\langle \bm \eta,\Z_i\rangle)\phi_{L}(\langle \bm \eta,\Z_i \rangle)\bm{1}\left\{|\langle\tht,\Z_i\rangle|\leq T\right\} \nonumber \\ 
		& \geq  \frac{\gamma}{N}\sum_{i=1}^{N}\phi_L(\langle \bm \eta, Z_i\rangle)\bm{1}\left\{|\langle \tht,\Z_i\rangle|\leq T\right\},
	\end{align}
	where $\gamma:=\min_{|u|\leq T+2K}\psi(u) > 0$. Based on this lower bound, to prove Proposition \ref{proposition:RSC_random} it is sufficient to show that for all $r \in(0,1]$ and for $\bm \eta\in\mathbb{R}^{d}$ with $\|\bm \eta\|_2= r$, 
	\begin{align}\label{eq:pf_lower_bound}
		\frac{1}{N}\sum_{i=1}^{N}\phi_{L(r)}(\langle \bm \eta,\Z_i\rangle)\bm{1}\left\{|\langle\tht,\Z_i\rangle|\leq T\right\}\geq c_1 r^{2}-c_2\mathcal R_N \|\bm \eta\|_1 r , 
	\end{align}
	holds with probability $1-o(1)$, where $L(r)=K r$ and some positive constants $c_1, c_2$. This is because when \eqref{eq:pf_lower_bound} holds by substituting $\|\bm \eta\|_2=r$ and using \eqref{eq:ppnpf1} gives, 
	\begin{align*}
		\frac{1}{N}\sum_{i=1}^{N}\langle \bm \eta, \Z_i\rangle^{2}\psi(\langle \tht,\Z_i\rangle+ t\langle \bm \eta,\Z_i\rangle)\geq \nu \|\bm \eta\|_2^{2}-c_0 \mathcal R_N \|\bm \eta\|_1^2, 
	\end{align*}
	where the constants $(\nu,c_0)$ depend on $(c_1,c_2,\gamma)$ and we use the inequality $\|\bm \eta\|_2\leq\|\bm \eta\|_1$. In fact, it suffices to prove \eqref{eq:pf_lower_bound} for $r=1$, that is, 
	\begin{align}\label{eq:lower_bound}
		\frac{1}{N}\sum_{i=1}^{N}\phi_{K}(\langle \bm \eta,\Z_i\rangle)\bm{1}\left\{|\langle\tht,\Z_i\rangle|\leq T\right\}\geq c_1 -c_2\mathcal R_N \|\bm \eta\|_1 ,  
	\end{align}
	holds with probability $1-o(1)$. This is because given any vector in $\R^d$ with $\|\bm \eta\|_2=r>0$, we can apply \eqref{eq:lower_bound} to the rescaled unit-norm vector $\bm \eta/r$ to obtain
	\begin{align}\label{eq:ppnpf2}
		\frac{1}{N}\sum_{i=1}^{N}\phi_{K}(\langle \bm \eta/r,\Z_i\rangle)\bm{1}\left\{|\langle\tht,\Z_i\rangle|\leq T\right\}\geq c_1-c_2 \mathcal R_N \frac{\|\bm \eta\|_1}{r} . 
	\end{align}
	Noting that $\phi_{K}(u/r)= \phi_{L(1)}(u/r)=(1/r)^{2}\phi_{L(r)}(u)$ and multiplying both sides of \eqref{eq:ppnpf2} by $r^{2}$ gives \eqref{eq:pf_lower_bound}.

	\subsubsection{Proof of \eqref{eq:lower_bound}}

	Define a new truncation function: 
	\begin{align*}
		\bar \phi_K(u)=u^{2}\bm{1}\left\{|u|\leq K\right\}+(u-2K)^{2}\bm{1}\left\{K<u\leq 2K\right\}+(u+2K)^{2}\bm{1}\left\{-2K\leq u<-K\right\}. 
	\end{align*}
	Note this function is Lipschitz with parameter $2K$. Moreover, since $\phi_K \geq \bar \phi_K$, to prove \eqref{eq:lower_bound} it suffices to show that for all vectors $\bm \eta \in \mathbb R^d$ of unit norm,  
	\begin{align}\label{eq:unit-norm}
		\frac{1}{N}\sum_{i=1}^{N}\bar \phi_K(\langle \bm \eta, \Z_i\rangle)\bm{1}\left\{|\langle \tht,\Z_i \rangle|\leq T\right\}\geq c_1-c_2 \cR_N \|\bm \eta\|_1 , 
	\end{align}
	holds with probability $1-o(1)$. To this end, for a given $\ell_1$ radius $b \geq 1$, define the random variable 
	\begin{align*}
		Y_n(b): =\sup_{\substack{\|\bm \eta\|_2=1 \\ \frac{b}{2} \leq \|\bm \eta\|_1\leq b}} \left|\frac{1}{N}\sum_{i=1}^{N}\bar \phi_K(\langle \bm \eta,\Z_i\rangle)\bm{1}\left\{|\langle\tht,\Z_i\rangle|\leq T\right\}-\mathbb{E}\left(\bar \phi_K(\langle \bm \eta, \Z \rangle)\bm{1}\left\{|\langle\tht,\Z\rangle|\leq T\right\}\right)\right| . 
	\end{align*}
	
	\begin{lemma}\label{lm:expectation12} Under the assumptions of Theorem \ref{thm:stheta} the following hold: 
		\begin{itemize}
			
			\item[$(1)$] By choosing $K^2=8 \kappa_2/\kappa_1$ and $T^{2}=8 \kappa_2 \Theta^{2}/\kappa_1$,  
			$$\mathbb{E}\left(\bar \phi_K(\langle\bm \eta,\Z\rangle)\bm{1}\left\{|\langle\tht,\Z \rangle|\leq T\right\}\right)\geq\frac{3}{4} \kappa_1.$$ 
			
			\item[$(2)$] There exist a positive constant $c_2$ such that
			\begin{align*}%\label{eq:second_concen}
				\mathbb{P}\left(Y_{n}(b)>\frac{1}{2} \kappa_1 + \frac{1}{2} c_2 \mathcal R_N b \right)\leq e^{-O_{\kappa_1, \kappa_2}(n)} .
			\end{align*}
			
		\end{itemize}
	\end{lemma}
	
	Lemma \ref{lm:expectation12} implies that with probability $1 - e^{-O_{\kappa_1, \kappa_2}(n)}$ for all $\bm \eta \in \R^d$ such that $\|\bm \eta\|_2=1$ and $\frac{b}{2} \leq \|\bm \eta\|_1\leq b$, we have 
	\begin{align}\label{eq:LNb}
		\frac{1}{N}\sum_{i=1}^{N}\bar \phi_K(\langle \bm \eta,\Z_i\rangle)\bm{1}\left\{|\langle\tht,\Z_i\rangle|\leq T\right\} & \geq \mathbb{E}\left(\bar \phi_K(\langle \bm \eta, \Z \rangle)\bm{1}\left\{|\langle\tht,\Z\rangle|\leq T\right\}\right) -\frac{1}{2} \kappa_1 - \frac{1}{2} c_2 \mathcal R_N b \nonumber \\ 
		& \geq \frac{1}{4} \kappa_1 - c_2 \mathcal R_N \|\bm \eta\|_1. 
		%& = \frac{1}{4} \kappa_1 - c_2 \mathcal R_N^2 b + o(1) \nonumber \\ 
	\end{align} 
	This establishes the bound \eqref{eq:unit-norm} with $c_1=\kappa_1/4$ for all vectors $\bm \eta \in \R^d$ with $\|\bm \eta \|_2 =1$ and $\frac{b}{2} \leq \|\bm \eta\|_1\leq b$, with probability $1 - e^{-O_{\kappa_1, \kappa_2}(n)}$.  
	
	We first prove Lemma \ref{lm:expectation12}. Then we will extend the bound in \eqref{eq:unit-norm} for all vectors  $\bm \eta \in \R^d$ with $\|\bm \eta \|_2 =1$ using a peeling strategy to complete the proof.

	\subsubsection*{Proof of Lemma \ref{lm:expectation12} $(1)$}
	
	To show Lemma \ref{lm:expectation12} (1) it suffices to show that
	\begin{align}\label{eq:kappa12}
		\mathbb{E}\left(\bar \phi_K(\langle \bm \eta, \Z\rangle)\right)\geq\frac{7}{8} \kappa_1 \quad \text{ and} \quad \mathbb{E}\left(\bar \phi_K(\langle \bm \eta, \Z\rangle)\bm{1}\left\{\left|\langle\tht,\Z\rangle \right|>T\right\}\right)\leq\frac{1}{8} \kappa_1.
	\end{align}
	Indeed, if these two inequalities hold, then 
	\begin{align*}
		\mathbb{E}\left(\bar \phi_K(\langle\bm \eta,\Z\rangle)\bm{1}\left\{\left|\langle\tht,\Z\rangle \right|\leq T\right\}\right)
		&
		=\mathbb{E}\left(\bar \phi_K(\langle\bm \eta,\Z\rangle)\right)-\mathbb{E}\left(\bar \phi_K(\langle\bm \eta,\Z\rangle)\bm{1}\left\{\left|\langle\tht,\Z\rangle \right|>T\right\}\right)\\
		&
		\geq\frac{7}{8} \kappa_1 - \frac{1}{8} \kappa_2 = \frac{3}{4} \kappa_1 . 
	\end{align*} 
	
	To prove first inequality in \eqref{eq:kappa12}, note that 
	\begin{align}\label{eq:kappa12_pf1}
		\mathbb{E}\left(\bar \phi_K(\langle\bm \eta,\Z\rangle)\right)\geq\mathbb{E}\left(\langle \bm \eta,\Z\rangle^{2}\bm{1}\left\{\left|\langle\bm \eta,\Z\rangle\right|\leq K \right\}\right)
		&
		=\mathbb{E}\left(\langle\bm \eta,\Z\rangle^{2}\right)-\mathbb{E}\left(\langle\bm \eta,\Z\rangle^{2}\bm{1}\left\{\left|\bm \eta,\Z\right|> K \right\}\right) \nonumber \\
		&
		\geq \kappa_1 - \mathbb{E}\left(\langle\bm \eta,\Z\rangle^{2}\bm{1}\left\{\left|\langle\bm \eta,\Z\rangle\right| > K \right\}\right) . 
	\end{align} 
	To lower bound the second term, applying the Cauchy-Schwarz inequality yields, 
	\begin{align*}
		\mathbb{E}\left(\langle\bm \eta,\Z\rangle^{2}\bm{1}\left\{\left|\langle\bm \eta,\Z\rangle\right| > K \right\}\right) & \leq \sqrt{\mathbb{E}\left(\langle \bm \eta,\Z\rangle^{4}\right)}\sqrt{\mathbb{P}\left(\left|\langle\bm \eta,\Z\rangle\right| > K \right)}\nonumber \\ 
		& \leq \frac{\mathbb{E}(\langle \bm \eta,\Z\rangle^{4})}{K^2} \tag*{(by Markov's inequality)} \nonumber \\ 
		& \leq \frac{\kappa_2}{K^{2}} , 
	\end{align*}
	using the assumption $\mathbb{E}(\langle \bm \eta,\Z \rangle^{4}) \leq \kappa_2$, for all 
	$\bm \eta$ such that $\| \bm \eta \|_2 \leq 1$. Therefore, setting $K^{2}=8 \kappa_2/\kappa_1$ and using \eqref{eq:kappa12_pf1} proves the first inequality in \eqref{eq:kappa12}.  
	
	Now, we turn to prove the second inequality in \eqref{eq:kappa12}. For this note that $\bar \phi_K\left(\langle\bm \eta,\Z\rangle\right) \leq \langle \bm \eta,\Z\rangle^{2} $ and by Markov's inequality, 
	\begin{align*}
		\mathbb{P}\left(\left|\langle\tht,\Z\rangle\right|\geq T\right)\leq\frac{\kappa_2 \|\tht\|_2^{4}}{T^{4}} . 
	\end{align*} 
	Then the Cauchy-Schwarz inequality implies, 
	\begin{align*}
		\mathbb{E}\left(\bar \phi_K\left(\langle\bm \eta,\Z\rangle\right)\bm{1}\left\{\left|\langle\tht,\Z\rangle\right|>T\right\}\right) \leq\frac{\kappa_2 \|\tht\|_2^{2}}{T^{2}} \leq\frac{\kappa_2 \Theta^2}{T^{2}}
	\end{align*}
	Thus setting $T^{2}=8 \kappa_2 \Theta^{2}/\kappa_1$ shows the second inequality in \eqref{eq:kappa12}.

	\subsubsection*{Proof of Lemma \ref{lm:expectation12} $(2)$} 
	
	We begin by recalling the functional Hoeffding's  inequality. Towards this, let $\mathcal{F}$ be a symmetric collection of  functions from $\R^d$ to $\R$, that is, if $f \in \mathcal{F}$, then $-f\in\mathcal{F}$. Suppose $X_1, X_2, \ldots, X_N$ are i.i.d. from a distribution supported on $\mathcal X \subseteq \R^d$ and let 
	$$Z:=\sup_{f\in\mathcal{F}}\left\{\frac{1}{N}\sum_{i=1}^{N}f(X_i)\right\} . $$  Then we have the following result: 
	
	\begin{lemma}[{\citep[Theorem 3.26]{wainwrighthighdimensional}}]\label{lm:fab}
		For each $f\in\mathcal{F}$ assume that there are real numbers $a_{f}\leq b_{f}$ such that $f(x)\in[a_{f},b_{f}]$ for all $x\in\mathcal{X}$. Then for all $\delta\geq0$, \begin{align*}
			\mathbb{P}\left(Z - \mathbb{E}(Z) \geq \delta\right)\leq\exp\left(-\frac{n\delta^{2}}{4L^{2}}\right) 
		\end{align*}
		where $L^{2}:=\sup_{f\in\mathcal{F}}\left\{(b_{f}-a_{f})^{2}\right\}$.
	\end{lemma} 
	
	%By our choice $\tau=K$, the empirical process defining $Y_n(r)$ is based on functions bounded in absolute value by $K^{2}$.

	For $\bm \eta \in \R^d$ such that $\|\bm \eta\|_2=1$ and $\|\bm \eta\|_1\leq b$, define 
	\begin{align*}
		f_{\bm \eta}(\Z): =  \bar \phi_K(\langle \bm \eta,\Z \rangle)\bm{1}\left\{|\langle\tht, \Z \rangle|\leq T\right\}-\mathbb{E}\left(\bar \phi_K(\langle \bm \eta, \Z \rangle)\bm{1}\left\{|\langle\tht, \Z \rangle|\leq T\right\}\right) 
	\end{align*} 
	Note that 
	$$Y_n(b) = \sup_{\substack{\|\bm \eta\|_2=1 \\ \frac{b}{2} \leq \|\bm \eta\|_1\leq b}} \left\{ \left| \frac{1}{N} \sum_{i=1}^N f_{\bm \eta}(\bm Z_i) \right| \right\} . $$ 
	Since $|f_{\bm \eta}(\bm z)| \leq K^2$, for any positive constant $c_3$ by Lemma \ref{lm:fab}, 
	\begin{align}\label{eq:Ynb}
		\mathbb{P}\left(Y_n(b) - \mathbb{E}(Y_n(b)) \geq c_3 \mathcal R_N b +\frac{1}{2} \kappa_1 \right)\leq e^{-c_4 n \mathcal R_N^2 b^{2} -c_4 n} \leq e^{-c_4 n} , 
	\end{align}
	where $c_4$ is a positive constant depending on $K$ and $c_3$. Now, suppose $\{\varepsilon_{i}\}_{i=1}^{n}$ be a sequence of i.i.d. Rademacher variables. Then a symmetrization argument \citep[Proposition 4.11]{wainwrighthighdimensional} implies that

	\begin{align*}
		\mathbb{E}\left(Y_n(b)\right)\leq 2 \mathbb{E}_{\Z, \bm \varepsilon}\left(\sup_{\substack{\|\bm \eta\|_2=1 \\ \frac{b}{2} \leq \|\bm \eta\|_1\leq b}}\left|\frac{1}{N}\sum_{i=1}^{N}\varepsilon_i\bar \phi_K\left(\langle\bm \eta,\Z_i\rangle\right)\bm{1}\left\{\left|\langle\tht,\Z_i\rangle\right|\leq T\right\}\right|\right).
	\end{align*}
	Since $\bm{1}\left\{\left|\langle\tht,\Z_i\rangle\right|\leq T\right\}\leq 1$ and $\bar \phi_K$ is Lipschitz with parameter $2K$, the contraction principle yields 
	\begin{align*}
		\mathbb{E}\left(Y_n(b)\right) & \leq 8K\mathbb{E}_{\Z,\bm \varepsilon}\left(\sup_{\substack{\|\bm \eta\|_2=1 \\ \frac{b}{2} \leq \|\bm \eta\|_1\leq b}}\left|\frac{1}{N}\sum_{i=1}^{N}\varepsilon_i\langle\bm \eta,\Z_i\rangle\right|\right) \nonumber \\ 
		& \leq 8K b\mathbb{E}_{\Z,\bm \varepsilon}\left(\sup_{\|\bar{\bm \eta} \|_1\leq 1} \left| \left \langle \bar{\bm \eta}, \frac{1}{N}\sum_{i=1}^{N} \varepsilon_i \Z_i \right \rangle\right|\right) \tag*{ (where $\overline{ \bm \eta}:= \bm \eta/\| \bm \eta\|_1)$ } \nonumber \\ 
		& = 8K b \mathbb{E}\left(\left\|\frac{1}{N}\sum_{i=1}^{N}\varepsilon_i\Z_i\right\|_{\infty}\right)     = 8 K b \mathcal R_N , 
	\end{align*}
	where the final step follows by applying H\"older's inequality. Then choosing $c_2= 32 K $ gives, 
	\begin{align*}
		\mathbb{P}\left(Y_{n}(b)>\frac{1}{2} \kappa_1 + \frac{1}{2} c_2 \mathcal R_N b \right) &\leq  \mathbb{P}\left(Y_{n}(b) - \mathbb E(Y_n(b)) > \frac{1}{2} \kappa_1 + 8 K b \mathcal R_N \right)   \nonumber \\ 
		%& \leq  \mathbb{P}\left(Y_{n}(b) - \mathbb E(Y_n(b)) > \frac{1}{2} \kappa_1 + 8 K b \mathcal R_N^2 \right)    \nonumber \tag*{(using $\mathcal R_N^2 \leq \mathcal R_N$ since $ \log d/N =o(1)$) } \\ 
		& \leq e^{-O_{\kappa_1, \kappa_2}(n)} , 
	\end{align*}
	using \eqref{eq:Ynb} with $c_3= 8 K$. This proves Lemma \ref{lm:expectation12} (2). 
	%$-\mathbb{E}(Y_n(b))+ c_3 \mathcal R_N^2 b +\frac{1}{2} \kappa_1 \geq  -8 K b \mathcal R_N + c_3 \mathcal R_N^2 b +\frac{1}{2} \kappa_1$ 

	\subsubsection*{Final Details: Peeling Strategy} 
	
	Recall from \eqref{eq:LNb} that we have proved \eqref{eq:unit-norm} holds for any fixed $b$ such that $\frac{b}{2} \leq \| \bm \eta \|_1 \leq b$ and $\| \bm \eta \|_2 \leq 1$. The final step in the proof of Proposition \ref{proposition:RSC_random} is to remove the restriction on the $\ell_1$ norm of $\bm \eta$. For this, we apply a peeling strategy as in the proof of \citet[Theorem 9.34]{wainwrighthighdimensional}. To this end, consider the set 
	\begin{align*}
		\mathbb{S}_{\ell}:=\left\{\bm \eta \in \mathbb{R}^{d} : 2^{\ell-1}\leq\frac{\left\|\bm \eta\right\|_1}{\left\|\bm \eta\right\|_2}\leq 2^{\ell}\right\}\cap\left\{\bm \eta \in \mathbb{R}^{d} : \left\|\bm \eta\right\|_2\leq 1\right\} , \end{align*} 
	for $\ell=1,\ldots,\lceil\log_2(\sqrt{d}) \rceil$. Then for $\bm \eta\in\mathbb{R}^{d}\cap\mathbb{S}_\ell$ by \eqref{eq:LNb}, 
	\begin{align*}
		\frac{1}{N}\sum_{i=1}^{N}\langle \bm \eta, \Z_i\rangle^{2}\psi(\langle \tht,\Z_i\rangle+ t\langle \bm \eta,\Z_i\rangle)\geq \nu \|\bm \eta\|_2^{2}-c_0\mathcal R_N\|\bm \eta\|_1^{2},
	\end{align*}
	with probability at least $1 - e^{-O_{\kappa_1, \kappa_2}(n)}$. Then by a union bound,  
	\begin{align*}
		\frac{1}{N}\sum_{i=1}^{N}\langle \bm \eta, \Z_i\rangle^{2}\psi(\langle \tht,\Z_i\rangle+ t\langle \bm \eta,\Z_i\rangle) \geq \nu\|\bm \eta\|_2^{2}-c_0\mathcal R_N\|\bm \eta\|_1^{2} , 
	\end{align*}
	for all $\bm \eta \in \mathbb{R}^{d}$ such that $\|\bm \eta\|_2\leq 1$, with probability at least $1- \lceil \log_2(d) \rceil e^{-O_{\kappa_1, \kappa_2}(n)} = 1 - o(1)$. This completes the proof of Proposition \ref{proposition:RSC_random}.

	\section{Proofs from Section \ref{sec:examples}}\label{sec:graphestimatepf}

	In this section, we prove the results stated in Section \ref{sec:examples}. We begin with the proof of Theorem \ref{thm:sparse} in Section \ref{proof_sparse}. Corollary \ref{inh} is proved in Section \ref{sec:inhomogeneouspf}. 
	
	\subsection{Proof of Theorem \ref{thm:sparse}}\label{proof_sparse}

	The proof of Theorem \ref{proof_sparse} follows along the same lines as in Theorem \ref{thm:estimate}, so to avoid repetition we sketch the steps and highlight the relevant modifications. As in the proof of Theorem \ref{proof_sparse}, the first step is to show the concentration of $\nabla L_N(\bg)$. Towards this, following the proof of  Lemma \ref{partialbetalips} shows that $Q_{r}$ (as defined in \eqref{eq:qr}) is $O_\beta(\mathrm{poly}(d_{\mathrm{max}})/N)$-Lipschitz, for each $r \in [\ell]$. Therefore, by arguments as in Lemma \ref{partialbetalips}, 
	\begin{align*}%\label{eq:phiG1}
		\p\left(\left|\sum_{i=1}^N \phi_{i}(\bm X)\right| \ge t\right) & \le \p\left(\max_{r \in [\ell]} |Q_{r}(\bm X)| \ge \frac{t \ell'}{\ell}\right) 
		%&\lesssim \ell e^{-C'Nt^2\ell'^2/(\ell^2 \pl(d_{\max}))}\\ 
		\lesssim e^{- O_{\beta, M} (N t^2/\pl(d_{\max}))}.
	\end{align*}
	since $\ell= O(\log N)$ and $\ell'/\ell = \Theta(1)$. In this case, following the notations in the proof of Lemma \ref{lm:gradient}, we have $\ell/\ell' = \Theta (d_{\max})$. 
	Similarly, for $s \in [d]$, 
	\begin{align*}%\label{eq:phiG2}
		\p\left(\left|\sum_{i=1}^N \phi_{i, s}(\bm X)\right| \ge t\right) &\lesssim  e^{- O_{\beta, M} (N t^2/\pl(d_{\max}))}. 
	\end{align*} 
	Hence, choosing $\lambda:=\delta \pl(d_{\max})\sqrt{\log(d+1)/N}$ and $t := \lambda/2$ gives, for some constant $C$ depending only on $\beta$ and $M$, 
	$$\p\left(\left|\sum_{i=1}^N \phi_{i}(\bm X)\right| \ge \frac{\lambda}{2} \right) \le (d+1)^{-C\delta^2/4} \quad \text{and} \quad	 \p\left(\left|\sum_{i=1}^N \phi_{i,j}(\bm X)\right| \ge \frac{\lambda}{2} \right) \le (d+1)^{-C\delta^2/4},$$ 
	for all $j \in [s]$.  A final union bound over the $(d+1)$ coordinates now shows that
	\begin{align}\label{eq:derivativeconditionG}
		\bp\left(\norm{\nabla L_N(\bg)}_\infty > \frac{\lambda}{2} \right) = o(1), 
	\end{align} 
	where $\lambda:=\delta \pl(d_{\max})\sqrt{\log(d+1)/N}$ as above and the $o(1)$-term goes to infinity as $d \rightarrow \infty$. This establishes the concentration of the gradient. 
	
	Next, we need to show the strong concavity of the Hessian. To this end, first note that since $|E(G_N)| = O(N)$ and the number of non-isolated vertices of $G_N$ is $\Omega(N)$, there exist constants $L_1, L_2 > 0$, such that $|E(G_N)| \le L_1 N$, and the number of non-isolated vertices of $G_N$ is larger than $L_2 N$, for all $N$ large enough. For $D \geq 1$ define, 
	$$V_N(D) := \{v \in V(G_N):  d_v \in [1,D]\}.$$ 
	Note that 
	$$|V_N(D)| \ge N(1-(2L_1/D)) - N(1-L_2) = N(L_2 - (2L_1/D)), $$
	since $G_N$ has at least $N(1- (2L_1/D))$ vertices with degree not exceeding $D$, among which at most $N(1-L_2)$ are isolated. Hereafter, we choose $D := \lceil 4 L_1 / L_2\rceil$, so that $|V_N(D)| \ge L_2 N/2$. Then, with notations as in \eqref{second_taylor} we have, 
	\begin{align}\label{eq:D}
		& L_N(\hat{\bg}) - L_N(\bg) - \nabla L_N(\bg) ^\top (\hat{\bg}-\bg) \nonumber \\ 
		&= \frac{1}{2} (\hat{\bg}-\bg)^\top \nabla^2 L_N(\underline{\bg}) (\hat{\bg}-\bg) \nonumber  \\ 
		& = \frac{1}{2N}\sum_{i=1}^N \frac{(\hat{\bg}-\bg)^\top \bm U_i \bm U_i^\top (\hat{\bg}-\bg)}{\cosh^2(\underline{\beta} m_i(\bs) + \underline{\bh}^\top \bm Z_i)} \nonumber  \\ 
		&\ge \frac{1}{2N}\sum_{i \in V_N(D)} \frac{(\hat{\bg}-\bg)^\top \bm U_i \bm U_i^\top (\hat{\bg}-\bg)}{\cosh^2(\underline{\beta} m_i(\bs) + \underline{\bh}^\top \bm Z_i)} \nonumber  \\
		& \ge \frac{1}{2N}\sum_{i\in V_N(D)} \frac{(\hat{\bg}-\bg)^\top \bm U_i \bm U_i^\top (\hat{\bg}-\bg)}{\cosh^2(|\beta| D  + (D+M) \|\hat{\bg}-\bg\|_1  + s M \Theta)} , 
	\end{align}
	where the last step uses the bound in \eqref{eq:Theta} and the bound $|\underline{\beta} m_i(\bs)| \leq |\underline{\beta}| |m_i(\bs)| \leq  |\underline{\beta}| D \leq |\beta| D + D \|\hat \bg-\bg\|_1 $. Next, using the bound $|V_N(D)| \ge L_2 N/2$ in \eqref{eq:D} gives, 
	\begin{align}\label{eq:UU}
		& L_N(\hat{\bg}) - L_N(\bg) - \nabla L_N(\bg) ^\top (\hat{\bg}-\bg) \nonumber \\ 
		& \ge \frac{L_2}{4 \cosh^2(|\beta|D  + (D+M)\|\hat{\bg}-\bg\|_1 + s M \Theta)} \cdot \frac{1}{|V_N(D)|} \sum_{i\in V_N(D)} (\hat{\bg}-\bg)^\top \bm U_i \bm U_i^\top (\hat{\bg}-\bg) \nonumber \\ 
		& = \frac{L_2}{4 \cosh^2(|\beta|D  + (D+M)\|\hat{\bg}-\bg\|_1 + s M \Theta)} (\hat{\bg}-\bg)^\top \tilde{\bm G} (\hat{\bg}-\bg), 
	\end{align} 
	where 
	\begin{equation*}
		\tilde{\bm G} :=  \frac{1}{|V_N(D)|}\left(
		\begin{array}{cc}
			\tilde{\bm m}^\top \tilde{\bm m} & \tilde{\bm m}^\top \tilde{\bm Z}\\
			\tilde{\bm Z}^{\top} \tilde{\bm m} & \tilde{\bm Z}^\top \tilde{\bm Z} 
		\end{array}
		\right) , 
	\end{equation*} 
	with $\tilde{\bm m} := (m_i(\bm X))_{i\in V_N(D)}^\top$ and $\tilde{\bm Z} = (\bm Z_i)_{i \in V_N(D)}^\top$. The calculation in \eqref{eq:UU} implies that in order to establish the strong concavity of the Hessian in the setting of Theorem \ref{thm:sparse}, we need to show $\lambda_{\mathrm{min}}(\tilde{\bm G}) \gtrsim 1$ with probability going to $1$, where 
	$\lambda_{\mathrm{min}}(\tilde{\bm G})$ denotes the minimum eigenvalue of $\tilde{\bm G}$. For this step, we follow the steps of Lemma \ref{mineiglem} with the matrix $\bm F$ replaced by $\tilde{\bm F} := \bm I - \tilde{\bm Z}(\tilde{\bm Z}^\top \tilde{\bm Z})^{-1}\tilde{\bm Z}^\top$. Then, repeating the proof of \eqref{fs22} in Lemma \ref{meanlb}, with $\bm A$ replaced by   $\tilde{\bm A} := \bm A|_{V_N(D) \times [N]}$, we can find a set $J \subseteq [N]$ such that 
	$$\e\left(\|\tilde{\bm F} \tilde{\bm m}\|_2^2 \Big| \bm X_{J^c} \right) \gtrsim \Upsilon^2 \left(\|\tilde{\bm A}\|_F^2 - d \cdot \pll(N)\right),$$ 
	since $\|\tilde{\bm A}\|_2 \le \|\bm A\|_\infty = O(\pll(N))$. This implies, 
	\begin{align}\label{eq:meanG}
		\e\left(\|\tilde{\bm F} \tilde{\bm m}\|_2^2 \Big| \bm X_{J^c} \right) \gtrsim N, 
	\end{align} 
	since $\|\tilde{\bm A}\|_F^2 \ge |V_N(D)| \gtrsim N$ (because every vertex in $V_N(D)$ has degree at least 1) and $d = o(N)$. The final step is to establish that $\|\tilde{\bm F} \tilde{\bm m}\|$ concentrates around $\|\tilde{\bm F} \tilde{\bm m}\|_2^2 \Big| \bm X_{J^c}$, conditional on $\bm X_{J^c}$. This follows by repeating the proof of Lemma \ref{concmeanlb}, which introduces an extra $1/\pll(N)$ factor in each of the two exponential terms in the RHS of \eqref{eq:concmean}, since $\|\tilde{\bm A}\|_2 \le \|\bm A\|_\infty = O(\pll(N))$. This, combined with \eqref{eq:meanG}, shows 
	\begin{align}\label{eq:concentrationG}
		\p(\lambda_{\min}(\tilde{\bm G}) \ge C) \ge 1-e^{-\Omega(N/\mathrm{polylog}(N))}, 
	\end{align}
	for some constant $C > 0$. The proof of Theorem \ref{thm:sparse} can be now completed using \eqref{eq:derivativeconditionG} and \eqref{eq:concentrationG}, as in Theorem \ref{thm:estimate}. 
	
	\subsection{Proof of Corollary \ref{inh}}\label{sec:inhomogeneouspf}

	To prove Corollary \ref{inh} we verify that the hypotheses of Theorem \ref{thm:sparse} are satisfied. Note that we can write
	$$|E(G_N)| = \sum_{1\le u < v \le N} B_{uv} ,$$
	where $B_{uv} \sim \mathrm{Ber}(p_{uv})$, and $\{B_{uv} \}_{1\le u<v\le N}$ are independent. This implies, 
	$\e |E(G_N)| =  O(N)$ and $\mathrm{Var}(|E(G_N)|) = O(N)$, 
	since $\sup_{1 \leq i , j \leq N} p_{ij} = O(1/N)$ by \eqref{eq:P1}. Hence, by Chebyshev's inequality, $$|E(G_N)| \le \e |E(G_N)| + N = O(N),$$ with probability $1- o(1)$.
	
	Next, we will show that $d_{\max} = \te(1)$ holds with high probability. To this end, define $\eta_u=\sum_{v \ne u} p_{uv}$, for $1 \leq u \leq N$. Clearly, by assumption \eqref{eq:P1}, $\max_{1 \leq u \leq N} \eta_u = O(1)$. Next, we establish that $\max_{1\le u \le N} \eta_u = \Omega(1)$. Towards this, note that by assumption \eqref{eq:P2} there exists $\varepsilon \in (0,1)$ and $u \in V(G_N)$ such that,  
	$$\limsup_{N\rightarrow \infty} \sum_{v=1}^N \log\left(1- p_{uv} \right) < \log \varepsilon.$$ Next, using the inequality $\log(1-x) \ge -x/(1-x)$, for all $x < 1$, and taking $N$ large enough such that $\sup_{u, v} p_{uv} < 1/2$, gives 
	$$\limsup_{N\rightarrow \infty} \left(-2 \sum_{v=1}^N p_{uv} \right) \le \log \varepsilon \quad \implies \quad \liminf_{N \rightarrow \infty} \eta_u \ge \frac{1}{2} \log \left(\frac{1}{
		\varepsilon}\right).$$ This shows that $\max_{1\le u \le N} \eta_u = \Omega(1)$. Therefore, by Proposition 1.11 in \citet{eigenerdos}, $d_{\max} \le O(\log N)$ with probability $1-o(1)$.

	Finally, we show that the number of non-isolated vertices of $G_N$ is $\Omega(N)$ with high probability. To this end, for each $v \in V(G_N)$, define $Y_v := \bm{1}\{d_v = 0\}$. Then, $I_N:=\sum_{v=1}^N Y_v$ is the total number of isolated vertices of $G_N$. Note that $Y_v \sim \mathrm{Ber}(\prod_{u=1}^N (1- p_{uv}))$. Therefore, by \eqref{eq:P2}, 
	$$\e (I_N) = \sum_{u=1}^N \prod_{v=1}^N \left(1- p_{uv} \right) \leq \alpha N,$$ for some $\alpha \in (0,1)$ and all large $N$ enough. Next, note that for any two distinct vertices $u, v\in V(G_N)$, 
	\begin{align*}
		\mathrm{Cov}(Y_u, Y_v) & =  p_{uv} \left(1-p_{uv}\right) \prod_{w \notin \{u, v\}} \left[\left(1- p_{u w} \right) \left(1- p_{v w} \right) \right] \le p_{uv} =  O\left(\frac{1}{N}\right).
	\end{align*}
	Similarly, it can be checked that $\mathrm{Var}(Y_v) = O(1)$, for $1 \leq v \leq N$. This implies, $\mathrm{Var}(I_N) = O(N)$. Hence, by Chebyshev's inequality,  
	$$\p\left(I_N \ge \left(\frac{1+\alpha}{2}\right) N \right) \le \p\left(I_N \ge \e (I_N) + \left(\frac{1-\alpha}{2} \right) N \right) \le \frac{\mathrm{Var}(I_N)}{\left(\frac{1-\alpha}{2} \right)^2 N^2} = O\left(\frac{1}{N}\right).$$
	This shows that the number of non-isolated vertices of $G_N$ is at least $(1-\alpha)N/2$ with probability $1-o(1)$. This completes the verification of the hypotheses of Theorem \ref{thm:sparse} and hence, Corollary \ref{inh} follows from Theorem \ref{thm:sparse}.

	\section{Proofs of Technical Lemmas}

	In this section we collect the proofs of various technical lemmas. The section is organized as follows: In Appendix \ref{sec:partialbetalipspf} we prove Lemma \ref{partialbetalips}. The proof of Lemma \ref{combinatorial1} is given in Appendix \ref{sec:combinatorial1pf}. In Appendix \ref{sec:variancepf} we prove a variance lower bound for linear functions. 
	
	\subsection{Proof of Lemma \ref{partialbetalips}}
	\label{sec:partialbetalipspf}
	
	To begin with, recall from \eqref{eq:qr} that
	$$Q_{r}(\bs) = -\frac{1}{N} \sum_{i\in I_{r}} m_i(\bs)\left[X_i - \tanh(\beta m_i(\bs) +\bh^\top \bm Z_i)\right].$$ 
	Hence, for any two $\bs,\bs' \in \{-1,1\}^N$, 
	\begin{align}\label{lips1}
		|Q_{r}(\bs) - Q_{r}(\bs')| = T_1 + T_2, 
	\end{align}
	where 
	\begin{align}\label{eq:T1T2}
		T_1 & =  \frac{1}{N} \left|\sum_{i\in I_{r}}\left\{ m_i(\bs) X_i - m_i(\bs') X_i'\right\} \right| \nonumber \\ 
		T_2 & = \frac{1}{N} \left|\sum_{i\in I_{r}}  \left\{m_i(\bs) \tanh(\beta m_i(\bs) +\bh^\top \bm Z_i) - m_i(\bs') \tanh(\beta m_i(\bs') +\bh^\top \bm Z_i)\right\} \right|. 
	\end{align} 
	Now, assume that $\bs$ and $\bs'$ differ only in the $k$-th coordinate, for some $k \in [N]$. Then 	
	\begin{align}\label{lipsfirst}
		T_1 &= \left|\sum_{i\in I_{r}} \sum_{j=1}^N a_{ij} (X_i X_j - X_i' X_j') \right|\nonumber\\&\le \left|\sum_{i\in I_{r}} a_{i k} (X_i X_k - X_i'X_k')\right| + \left|\sum_{j=1}^N a_{k j} (X_k X_j - X_k'X_j')\right|\nonumber\\&\le 2 \sum_{i\in I_{r}} |a_{ik}| + 2 \sum_{j=1}^N |a_{kj}|\nonumber\\&\leq 4\left|\sum_{i=1}^N a_{i k}\right| \leq 4\|\bm A\|_1 \leq 4 \quad\textrm{(by \eqref{eq:Anorm})}.
	\end{align}
	Next, we proceed to bound $T_2$. Towards this note that 	
	\begin{align}\label{lipssecond}
		T_2 & \leq T_{21} + T_{22}, 
	\end{align}
	where 
	\begin{align} 
		T_{21} & :=  \left|\sum_{i\in I_r} m_i(\bs) \left\{ \tanh(\beta m_i(\bs) +\bh^\top \bm Z_i) -  \tanh(\beta m_i(\bs') +\bh^\top \bm Z_i)\right\} \right|\nonumber\\ 
		T_{22} & :=  \left|\sum_{i\in I_r} a_{ik}(X_k-X_k')  \tanh(\beta m_i(\bs') +\bh^\top \bm Z_i) \right| , \nonumber 
	\end{align} 
	since $m_i(\bs) - m_i(\bs') = a_{ik}(X_k - X_k')$. Hence, using $|\tanh x -\tanh y | \leq |x-y|$ gives, 
	\begin{align}\label{eq:T21}
		T_{21} & \leq |\beta| \sum_{i\in I_r} |m_i(\bs)| \left| m_i(\bs) - m_i(\bs') \right| \leq  2|\beta| \sum_{i\in I_r} |m_i(\bs)| |a_{ik}| \leq 2|\beta| \|\bm A\|_\infty  \|\bm A\|_1  \leq 2|\beta|.
	\end{align} 
	and using $\tanh x \leq 1$ gives, 
	\begin{align}\label{eq:T22}
		T_{22} & \leq  2\sum_{i\in I_r} |a_{ir}| \leq  2\|\bm A\|_1 \leq  2.
	\end{align}
	Combining \eqref{lips1}, \eqref{eq:T1T2}, \eqref{lipsfirst}, \eqref{lipssecond}, \eqref{eq:T21}, \eqref{eq:T22} the result in Lemma \ref{partialbetalips} (1) follows.

	Next, we prove (2). To begin with, recall from \eqref{eq:qrs} that 
	$$Q_{r, s}(\bs) = -\frac{1}{N} \sum_{i\in I_r} \bm Z_{i,s}\left[X_i - \tanh(\beta m_i(\bs) +\bh^\top \bm Z_i)\right].$$ 
	Hence, for any two $\bs,\bs' \in \{-1,1\}^N$ differing in the $k$-th coordinate only, 
	\begin{align*}
		|Q_{r, s}(\bs) - Q_{r, s}(\bs')| &\leq \frac{1}{N} \left|\sum_{i\in I_r}  Z_{i,s}(X_i-X_i')\right|\\ &+ \frac{1}{N} \left|\sum_{i\in I_r}   Z_{i,s} \left[\tanh(\beta m_i(\bs) +\bh^\top \bm Z_i) - \tanh(\beta m_i(\bs') +\bh^\top \bm Z_i)\right]\right|\nonumber\\&\le \frac{2|Z_{k,s}|}{N} + \frac{|\beta|}{N}\left|\sum_{i\in I_r}  Z_{i,s} (m_i(\bs) - m_i(\bs'))\right|\nonumber\\& \leq \frac{2|Z_{k,s}|}{N} + \frac{2|\beta|}{N}\sum_{i=1}^N \left|  Z_{i,s} a_{ik}\right| , 
		%\\&\leq \frac{2M}{N} + \frac{2M\beta\|\bm A\|_1}{N}  \leq \frac{2M(1+\beta)}{N},\nonumber
	\end{align*}
	as desired. 
	%
	%\begin{align*}
	%	\sum_{k=1}^N c_k^2 &\lesssim \frac{1}{N^2}\sum_{k=1}^N Z_{k,s}^2 + \frac{\beta^2}{N^2} \sum_{k=1}^N \left(\sum_{i=1}^N |Z_{i,s}a_{ik}|\right)^2\\&\le \frac{1}{N}\max_{1\le j\le d} \frac{1}{N} \sum_{k=1}^N Z_{k,j}^2 ~+~\frac{\beta^2}{N^2} \sum_{k=1}^N \left(\left[\sum_{i: a_{ik}\ne 0} |Z_{i,s}|\right]^2 \left[\sum_{i=1}^N a_{ik}\right]^2 \right)\\&\le \frac{C}{N} + \|\bm A\|_\infty^2\frac{\beta^2}{N^2} \sum_{k=1}^N \sum_{i:a_{ik}\ne 0}Z_{i,s}^2\sum_{i=1}^N\bm{1}_{a_{ik}\ne 0}\\&=\frac{C}{N} + \|\bm A\|_\infty^2\frac{\beta^2}{N^2} \sum_{i=1}^N Z_{i,s}^2 \sum_{k=1}^N \bm{1}_{a_{ik}\ne 0} \sum_{i=1}^N\bm{1}_{a_{ik}\ne 0}\\&= \frac{C}{N} + \|\bm A\|_\infty^2\frac{\beta^2}{N^2} \sum_{i=1}^N Z_{i,s}^2 \left(\sum_{k=1}^N \bm{1}_{a_{ik}\ne 0}\right)^2\\&\le  \frac{C}{N} + \|\bm A\|_\infty^2\frac{\beta^2}{N}\left(\max_{1\le j\le d} \frac{1}{N} \sum_{i=1}^N Z_{i,j}^2\right) \max_{1\le i\le N}\left(\sum_{k=1}^N \bm{1}_{a_{ik}\ne 0}\right)^2\\&\le \frac{C}{N} + \frac{C\beta^2\|\bm A\|_\infty^2}{N} \max_{1\le i\le N}\left(\sum_{k=1}^N \bm{1}_{a_{ik}\ne 0}\right)^2 
	%\end{align*}
	%
	
	\subsection{Proof of the Lemma \ref{combinatorial1}}
	\label{sec:combinatorial1pf}

	Suppose that $\bm X$ comes from the model:
	\begin{align}\label{eq:betahD}
		\p_{\beta,\bm h}(\bm X) ~\propto~\exp\left(\sum_{i=1}^N h_i X_i +  \bm X^\top \bm D \bm X\right),
	\end{align}
	which is assumed to be $(R, \Upsilon)$-Ising.We will apply Lemma 17 in \citet{cd_ising_estimation} on the matrix $$\bm D :=  ((d_{ij}))_{1 \leq i, j \leq N} := \bm A/R~,$$ where $\bm A$ is the interaction matrix corresponding to the distribution of $\bm X$. Note that $\bm D$ satisfies the hypotheses of Lemma 17 in \citet{cd_ising_estimation}. For $\eta \in (0, R)$, define $\eta' := \eta/R$. This ensures that $\eta' \in (0,1)$. By Lemma 17 in \citet{cd_ising_estimation}, there exist subsets $I_1,\ldots, I_\ell \subseteq [N]$ with $\ell \lesssim  R^2\log N/\eta^2$, such that for all $1\le i \le N$,  $|\{j \in \ell: i\in I_j\}| = \lceil \eta \ell/8R\rceil$, and for all $j \in \ell$, 
	\begin{align}\label{eq:DIj}
		\|\bm D|_{I_j \times I_j}\|_\infty \le \eta' \implies \|\bm A|_{I_j \times I_j}\|_\infty \le \eta, 
	\end{align}
	where for a matrix $\bm M = ((m_{ij})) \in \mathbb{R}^{s\times t}$ and for sets $S\subseteq \{1,\ldots,s\}, T\subseteq \{1,\ldots,t\}$, we define $\bm M|_{S\times T} := ((m_{ij}))_{i\in S, j \in T} \in \mathbb{R}^{|S|\times |T|}$.  
	
	Now, for $j \in [\ell]$,  
	\begin{align}\label{lastpiece}
		& \frac{\mathbb{P}(\bm X_{I_j}  = \bm y|\bm X_{I_j^c} = \bm x_{-I_j})}{\mathbb{P}(\bm X_{I_j} = \bm y'|\bm X_{I_j^c} = \bm x_{-I_j})} \nonumber \\ 
		& = \frac{  \exp\left( \bm y_{I_j}^\top \bm D|_{I_j\times I_j} \bm y_{I_j} + \sum_{u\in I_j, v\notin I_j} d_{uv} y_u x_v  + \sum_{i\in I_j} h_i y_i \right)}{\exp\left(\bm y_{I_j}^\top \bm D|_{I_j\times I_j} \bm y_{I_j} + \sum_{u\in I_j, v\notin I_j} d_{uv} y_u' x_v  +  \sum_{i\in I_j} h_i y_i' \right)}. 
	\end{align} 
	Observe that the RHS of \eqref{lastpiece} is the probability mass function of  an Ising model $\mu$ on $\{-1,1\}^{|I_j|}$ with interaction matrix $\bm D|_{I_j\times I_j}$ and external magnetic field term at site $i$ given by $$h_i' = \sum_{v \notin I_j} D_{iv} x_v + h_i.$$ Recall from \eqref{eq:DIj} that $\| \bm A|_{I_j \times I_j }\|_\infty \leq \eta$. 
	The next step is to show that if $\bm Y \sim \mu$, then $$\min_{1\le u\le |I_j|} \mathrm{Var}(Y_u|\bm Y_{-u}) \ge \Upsilon~.$$ Towards this, note that for any $1\le u \le |I_j|$,  
	$$\p(Y_u = 1| \bm Y_{-u} = \bm y_{-u}) = \p(X_{I_j^u} = 1| \bm X_{I_j\setminus \{I_j^u\}} = \bm y_{-u}, \bm X_{-I_j} = \bm x_{-I_j})$$ where $I_j^u$ is the $u$-$\mathrm{th}$ smallest element of $I_j$.  Hence,
	$$\mathrm{Var}(Y_u|\bm Y_{-u} = \bm y_{-u}) = \mathrm{Var}(X_{I_j^u} | \bm X_{I_j\setminus \{I_j^u\}} = y_{-u}, \bm X_{-I_j} = \bm x_{-I_j}) \ge  \Upsilon,$$ 
	since \eqref{eq:betahD} is a $(R, \Upsilon)$-Ising model. This implies that $X_{I_j}|X_{I_j^c}$ is $(\eta, \Upsilon)$-Ising model, for $j \in [\ell]$.

	To prove \eqref{eq:a}, let us pick $j \in [\ell]$ uniformly at random, and note that for any vector $\bm a$, 
	$$\mathbb{E} \left(\sum_{i \in I_j} a_i \right) = \sum_{i=1}^N a_i \mathbb{P}(I_j \ni i) = \sum_{i=1}^N a_i \frac{\lceil \eta \ell/8R\rceil}{\ell}  \ge \frac{\eta}{8R} \sum_{i=1}^N a_i.$$
	This means that there is a fixed sample point $j$, such that $$\sum_{i \in I_j} a_i \ge \frac{\eta}{8R} \sum_{i=1}^N a_i.$$ This completes the proof of Lemma \ref{combinatorial1}.

	\subsection{Variance Lower Bound for Linear Functions}
	\label{sec:variancepf}
	
	In this section we derive a variance lower bound for linear functions in $(R, \Upsilon)$-Ising models. This follows the proof of Lemma 10 in \citet{cd_ising_estimation} adapted to our setting. We begin with the following definition:  For two probability measures $\mu$ and $\nu$, the $\ell_1$-Wasserstein distance is defined as: 
	$$W_1(\mu,\nu) := \min_{\pi \in \mathscr{C}_{\mu,\nu}} \e_{(\bm U, \bm V)\sim \pi} \|\bm U - \bm V \|_1,$$ 
	where $\mathscr{C}_{\mu,\nu}$ denotes the set of all couplings of the probability measures $\mu$ and $\nu$.

	\begin{lemma}\label{lemma14}
		Let $\bs \in \{-1,1\}^N$ be a sample from an $(R,\Upsilon)$- Ising model for some $R < 1/8$. Then, for each $i \in [N]$, 
		$$W_1\left(\bp_{\bm X_{-i}|X_i=1},~\bp_{\bm X_{-i}|X_i=-1} \right) \le \frac{16R}{1-8R}.$$ 
	\end{lemma} 
	
	\begin{proof}
		It follows from Lemma 4.9 in \citet{kostis3}, that there exists a coupling $\pi$ of the conditional measures $\bp_{\bm X_{-i}|X_i=1}$ and $\bp_{\bm X_{-i}|X_i=-1}$, such that:
		\begin{equation}\label{is}
			\mathbb{E}_{(\bm U, \bm V)\sim \pi} \left[d_H(\bm U, \bm V)\right] \le \frac{\alpha}{1-\alpha},
		\end{equation}
		where $d_H$ denotes the Hamming distance and $\alpha$ the Dobrushin coefficient.  By Lemma 4.4 in \citet{scthesis}, we know that Dobrushin's interdependence matrix is given by $8\bm D$. Hence, it follows from \eqref{is} that (Dobrushin's coefficient in Theorem 2.3 in \citet{kostis3} is given by $\alpha = 8\|\bm D\|_2$, see Theorem 4.3 in \citet{scthesis}),
		\begin{equation*}%\label{is1}
			W_1\left(\bp_{\bm X_{-i}|X_i=1},~\bp_{\bm X_{-i}|X_i=-1} \right) \le 2~\mathbb{E}_{(\bm U, \bm V)\sim \pi} \left[d_H(\bm U, \bm V)\right] \le \frac{16 \|\bm D\|_2}{1- 8\|\bm D\|_2} \le \frac{16R}{1-8R}.
		\end{equation*}
		which completes the proof of the lemma. 
	\end{proof}

	Using the above lemma, we now prove the desired variance lower bound: 
	
	\begin{lemma}\label{lemma10}
		Let $\bs$ be a sample from an $(R,\Upsilon)$-Ising model for some $R >0$. Then for any vector $\bm a \in \mathbb{R}^N$,
		$$\mathrm{Var}(\bm a^\top \bs) \gtrsim \frac{\|\bm a\|_2^2\Upsilon^2}{R}.$$  
	\end{lemma}  
	
	\begin{proof}
		First, consider the case $R \le \Upsilon/32 \le 1/32$. In this case, $1-8R \ge 3/4$, so 
		\begin{equation}\label{med1}
			\frac{8R}{1-8R} \le \frac{\Upsilon}{3}
		\end{equation}
		Now, it follows from Lemma \ref{lemma14} that for every $i$,
		\begin{align}
			\sum_{j\in [N]\setminus \{i\}} \left|\e \left(X_j|X_i=1\right) - \e\left(X_j|X_i=-1\right)\right| &\le \sum_{j\in [N]\setminus \{i\}} W_1\left(\p_{X_j|X_i=1}, \p_{X_j|X_i=-1})\right)\nonumber\\ 
			&\le W_1\left(\bp_{\bm X_{-i}|X_i=1},~\bp_{\bm X_{-i}|X_i=-1} \right) \nonumber \\ 
			& \le \frac{16 R}{1-8 R}. \label{med2}
		\end{align}
		
		\noindent Next, note that:
		\begin{align}
			\mathrm{Cov}(X_i,X_j) &= \e[(X_i-\e X_i) X_j] \nonumber \\ 
			& = \e\left[(X_i-\e X_i)\e(X_j|X_i)\right]\nonumber\\
			&= \p(X_i=1) (1-\e X_i) \e(X_j|X_i=1) - \p(X_i=-1) (1+\e X_i) \e(X_j|X_i=-1) \nonumber \\
			&= \frac{1+\e X_i}{2} (1-\e X_i) \e(X_j|X_i=1) - \frac{1-\e X_i}{2}(1+\e X_i) \e(X_j|X_i=-1)\nonumber\\&= \frac{1}{2} [1-(\e X_i)^2]\left[\e(X_j|X_i=1) - \e(X_j|X_i=-1)\right]\nonumber\\\label{med3}&\le \frac{1}{2} \left[\e(X_j|X_i=1) - \e(X_j|X_i=-1)\right]
		\end{align}
		Combining \eqref{med1}, \eqref{med2} and \eqref{med3}, we have for every $i$,
		\begin{equation}\label{med4}
			\sum_{j\in [N]\setminus \{i\}} \left|\mathrm{Cov}(X_i,X_j)\right| \le \frac{8 R}{1-8 R} \le \frac{\Upsilon}{3}.
		\end{equation}
		Hence, we have by \eqref{med4},
		\begin{align}
			\mathrm{Var}(\bm a^\top \bs) &\ge \sum_{i=1}^N a_i^2 \mathrm{Var}(X_i) - \sum_{i\ne j} \left|a_i a_j \mathrm{Cov}(X_i,X_j)\right|\nonumber\\&\ge \sum_{i=1}^N a_i^2 \mathrm{Var}(X_i) - \sum_{i\ne j} \frac{(a_i^2+a_j^2)\left|\mathrm{Cov}(X_i,X_j)\right|}{2}\nonumber\\&= \sum_{i=1}^N a_i^2\left[\mathrm{Var}(X_i) - \sum_{j\in [N]\setminus \{i\}} \left|\mathrm{Cov}(X_i,X_j)\right| \right]\nonumber\\&\ge\sum_{i=1}^N a_i^2 \left(\Upsilon - \frac{\Upsilon}{3}\right) \nonumber \\ 
			& = \frac{2\Upsilon}{3} \|\bm a\|_2^2 \ge \frac{\|\bm a\|_2^2 \Upsilon^2}{R}\cdot \frac{2R}{3} \gtrsim \frac{\|\bm a\|_2^2 \Upsilon^2}{R}.
			\label{c111}
		\end{align}
		
		Now  consider the case $R > \Upsilon/32$. By Lemma \ref{combinatorial1}, we choose a subset $I$ of $[N]$ such that conditioned on $\bs_{I^c}$, $\bs_I$ is a $(\Upsilon/32,\Upsilon)$- Ising model, and
		\begin{equation}\label{rtour}
			\|\bm a_I\|_2^2 \ge \frac{\Upsilon}{256R}\|\bm a\|_2^2.
		\end{equation}
		Hence, we have from \eqref{c111} and \eqref{rtour},
		\begin{equation}\label{pl}
			\mathrm{Var}\left(\bm a^\top \bs | \bs_{I^c}\right) = \mathrm{Var}\left(\bm a_I^\top \bs_I | \bs_{I^c}\right) \ge \frac{2\Upsilon \|\bm a_I\|_2^2}{3} \ge \frac{\Upsilon^2 \|\bm a\|_2^2}{384R}.
		\end{equation}
		Lemma \ref{lemma10} now follows from \eqref{pl} on observing that $\mathrm{Var}(\bm a^\top \bs) \ge \mathbb{E} \left[\mathrm{Var}\left(\bm a^\top \bs | \bs_{I^c}\right)\right]$.
	\end{proof} 
	
	\subsection{Lipschitz Condition for the Gradient of $L_N$}

	In this section we show that $\nabla L_N$, the gradient of the pseudo-likelihood loss function $L_N$ defined in \eqref{eq:LNbetatheta}, is Lipschitz.
	
	\begin{lemma} \label{lm:LNgamma12} Suppose the design matrix  $\bm Z := (\bm Z_1,\ldots, \bm Z_N)^\top$ satisfies $\lambda_{\max}\left(\frac{1}{N} \bm Z^\top \bm Z \right) = O(1)$. Then there exists a constant $L>0$ such that for any two $\bg_1, \bg_2 \in \R^{d+1}$, 
		\begin{align*} 
			|| \nabla L_N(\bg_1) - \nabla L_N(\bg_2) ||_2  \leq L || \bg_1 - \bg_2 ||_2 .
		\end{align*} 
	\end{lemma}
	
	\begin{proof} 
		For any two $\bg_1, \bg_2 \in \R^{d+1}$, there exists $\bg^* \in \R^{d+1}$ such that 
		\begin{align*} 
			\nabla L_N(\bg_1) - \nabla L_N(\bg_2) = ( \bg_1 - \bg_2 )^\top \nabla^2 L_N(\bg^*). 
		\end{align*} 
		Therefore, 
		\begin{align}\label{eq:LNgamma12pf}
			|| \nabla L_N(\bg_1) - \nabla L_N(\bg_2) ||_2  \leq \sup_{\bg^* \in \R^{d+1}} \lambda_{\max}(\nabla^2 L_N(\bg^*) ) || \bg_1 - \bg_2 ||_2 ,  
		\end{align}  
		where $\lambda_{\max}( \nabla^2 L_N(\bg^*) )$ is the largest eigenvalue of the Hessian matrix $\nabla^2 L_N(\bg^*)$. Recall from \eqref{second_taylor} that 
		\begin{align*}	
			\nabla^2 L_N(\bg^*)  & = \frac{1}{N} \sum_{i=1}^N \frac{\bm U_i \bm U_i^\top }{\cosh^2(\beta^* m_i(\bs) + (\bh^*)^\top \bm Z_i)} ,   
		\end{align*}
		where $\bg^* = (\beta^*, (\bh^*)^\top)^\top$ and	 $\bm U_i := (m_i(\bs),\bm Z_i^\top)^\top$, for $1 \leq i \leq N$.  Using $\mathrm{sech}^2(x) \leq 1$ it follows that 
		\begin{align}\label{eq:LN2maximum}	
			\sup_{\bg^* \in \R^{d+1}} \lambda_{\max} ( \nabla^2 L_N(\bg^*) ) & \leq \lambda_{\max}\left( \frac{1}{N} \sum_{i=1}^N \bm U_i \bm U_i^\top \right),   
		\end{align}	
		Now, suppose $\bm w= (u, \bm v^\top)^\top \in \R^{d+1}$ be such that $|| \bm w ||_2 = 1$. Then 
		\begin{align*} 
			\frac{1}{N} \sum_{i=1}^N \bm w^\top \bm U_i \bm U_i^\top \bm w  = \frac{1}{N} \sum_{i=1}^N (\bm w^\top \bm U_i )^2 	 & = \frac{1}{N} \sum_{i=1}^N ( u m_i(\bm X) + \bm  v^\top \bm Z_i)^2 \nonumber \\  
			& \lesssim \frac{1}{N} \sum_{i=1}^N \left \{ u^2 (m_i(\bm X))^2 + (\bm  v^\top \bm Z_i)^2 \right \} \nonumber \\  
			& \leq \frac{1}{N} \sum_{i=1}^N m_i(\bm X)^2 + \frac{1}{N} \sum_{i=1}^N \bm  v^\top \bm Z_i \bm Z_i^\top \bm v .
		\end{align*} 
		Note that, since $|m_i(\bs)| \leq  || \bm A||_\infty \leq 1$ (by \eqref{eq:Anorm}), for $1 \leq i \leq N$, we have $\frac{1}{N} \sum_{i=1}^N m_i(\bm X)^2 \leq 1$. Moreover, $\frac{1}{N} \sum_{i=1}^N \bm  v^\top \bm Z_i \bm Z_i^\top \bm v = \frac{1}{N} \bm  v^\top \bm Z^\top \bm Z \bm v \leq \lambda_{\max} (\frac{1}{N} \bm Z^\top \bm Z ) = O(1) $. This implies $$\lambda_{\max}( \frac{1}{N} \sum_{i=1}^N \bm U_i \bm U_i^\top )= O(1),$$ hence by \eqref{eq:LNgamma12pf} and \eqref{eq:LN2maximum},  there exists a constant $L > 0$ such that 
		$$|| \nabla L_N(\bg_1) - \nabla L_N(\bg_2) ||_2  \leq L || \bg_1 - \bg_2 ||_2.$$ 
		This completes the proof of Lemma \ref{lm:LNgamma12}. 
	\end{proof}

	%
	%\vskip 0.2in
	%\bibliography{sample}
	%
	
\end{document}